\documentclass[11pt]{amsart}

\usepackage{enumerate,url,amssymb,mathrsfs,upref,pdfsync}

\newtheorem{theorem}{Theorem}[section]
\newtheorem{lemma}[theorem]{Lemma}
\newtheorem*{claim0}{Claim A}
\newtheorem*{claima}{Claim B}
\newtheorem*{claimb}{Claim C}
\newtheorem{proposition}[theorem]{Proposition}
\newtheorem{conjecture}[theorem]{Conjecture}
\newtheorem*{proposition*}{Proposition}

\theoremstyle{definition}
\newtheorem{definition}[theorem]{Definition}
\newtheorem{example}[theorem]{Example}
\newtheorem{question}[theorem]{Question}
\newtheorem{corollary}[theorem]{Corollary}

\theoremstyle{remark}

\numberwithin{equation}{section}

\newcommand{\abs}[1]{\lvert#1\rvert}
\newcommand{\norm}[1]{\lVert#1\rVert}

\newcommand{\bow}[1]{\overset{{}_{\bowtie}}{#1}}
\newcommand{\A}{\mathbb{A}}
\newcommand{\B}{\mathbb{B}}
\newcommand{\C}{\mathbb{C}}

\newcommand{\CC}{C}
\newcommand{\DD}{\mathbb{D}}
\newcommand{\E}{\mathcal{E}}
\newcommand{\EE}{\mathsf{E}}
\newcommand{\G}{\mathcal{G}}

\newcommand{\p}{\mathbf{P}}
\newcommand{\R}{\mathbb{R}}
\newcommand{\T}{\mathbb{T}}

\newcommand{\s}{\mathbb{S}}
\newcommand{\X}{\mathbb{X}}
\newcommand{\Y}{\mathbb{Y}}
\newcommand{\x}{\mathfrak{X}}
\newcommand{\y}{\Upsilon}
\newcommand{\N}{\mathbb{N}}

\newcommand{\D}{\mathfrak{D}}

\newcommand{\Ho}{\mathsf{H}}

\newcommand{\cto}{\xrightarrow[]{{}_{c\delta}}}
\newcommand{\onto}{\overset{{}_{\textnormal{\tiny{onto}}}}{\longrightarrow}}
\DeclareMathOperator{\dist}{dist}
\DeclareMathOperator{\Mod}{Mod}
\DeclareMathOperator{\re}{Re}
\DeclareMathOperator{\im}{Im}
\DeclareMathOperator{\id}{id}
\DeclareMathOperator{\diam}{diam}
\DeclareMathOperator{\inte}{Int}

\def\XXint#1#2#3{{\setbox0=\hbox{$#1{#2#3}{\int}$}
\vcenter{\hbox{$#2#3$}}\kern-.5\wd0}}

\def\le{\leqslant}
\def\ge{\geqslant}

\begin{document}

\title[Existence of energy-minimal diffeomorphisms]{Existence of energy-minimal diffeomorphisms between  doubly connected domains}

\author[Iwaniec]{Tadeusz Iwaniec}
\address{Department of Mathematics, Syracuse University, Syracuse,
NY 13244, USA}
\email{tiwaniec@syr.edu}

\author[Koh]{Ngin-Tee Koh}
\address{Department of Mathematics, Syracuse University, Syracuse,
NY 13244, USA}
\email{nkoh@syr.edu}

\author[Kovalev]{\\Leonid V. Kovalev}
\address{Department of Mathematics, Syracuse University, Syracuse,
NY 13244, USA}
\email{lvkovale@syr.edu}

\author[Onninen]{Jani Onninen}
\address{Department of Mathematics, Syracuse University, Syracuse,
NY 13244, USA}
\email{jkonnine@syr.edu}

\thanks{Iwaniec was supported by the NSF grant DMS-0800416 and the Academy of Finland grant 1128331.
Koh was supported by the NSF grant DMS-0800416.
Kovalev was supported by the NSF grant DMS-0968756.
Onninen was supported by the NSF grant  DMS-1001620.
}

\subjclass[2000]{Primary 58E20; Secondary 30C62, 31A05}


\keywords{Dirichlet energy, Sobolev homeomorphism, deformation, minimal energy, harmonic mapping, conformal modulus}

\begin{abstract}
The paper establishes the existence of homeomorphisms between two planar domains that minimize  the Dirichlet energy. 
 \smallskip

{\it Among all homeomorphisms $f \colon \Omega \onto \Omega^\ast$ between bounded doubly connected domains such that $\Mod \Omega \le \Mod \Omega^\ast$ there exists, unique up to conformal authomorphisms of $\Omega$, an energy-minimal diffeomorphism.}

 \smallskip
No boundary conditions are imposed on $f$. Although any energy-minimal diffeomorphism is harmonic, our results underline the major difference between the existence of harmonic diffeomorphisms and the existence of the energy-minimal diffeomorphisms.
The existence of globally invertible  energy-minimal mappings is of primary pursuit in the mathematical models of nonlinear elasticity and is also of interest in computer graphics.  
\end{abstract}


\maketitle
\tableofcontents
\section{Introduction}\label{intsec}

Throughout this text  $\Omega$ and $\Omega^*$ will be bounded domains in the complex plane $\C$. The \emph{Dirichlet energy}
 of a diffeomorphism $f\colon \Omega\onto\Omega^*$ is defined and denoted by
\begin{equation}\label{ener1}
\E[f]= \int_{\Omega}\abs{Df}^2 = 2 \int_{\Omega}\left(\abs{\partial f}^2 + \abs{\bar \partial f}^2\right)
\end{equation}
where $\abs{Df}$ is the Hilbert-Schmidt norm of the differential matrix of $f$.
The primary goal of this paper is to establish the existence
of a diffeomorphism $f\colon \Omega\onto\Omega^*$ of  smallest (finite) Dirichlet energy. The behavior of such
an \emph{energy-minimal} diffeomorphism $f$ resembles that of a conformal mapping. Indeed, a change of variables in~\eqref{ener1}
yields
\begin{equation}\label{ener2}
\E[f] = 2\int_{\Omega} J_f(z)\, dz + 4\int_{\Omega} \abs{\bar \partial f}^2\ge 2 \abs{\Omega^\ast}
\end{equation}
where $J_f$ stands for the Jacobian determinant and $\abs{\Omega^*}$ is the area of $\Omega^*$. A conformal mapping of $\Omega$ onto $\Omega^*$; that is,
a homeomorphic solution of the Cauchy-Riemann system $\bar\partial f=0$, would be an obvious choice for the minimizer of~\eqref{ener2}. Unfortunately,  for generic multiply connected domains there is no such mapping.
The existence of an energy-minimal diffeomorphism $f\colon \Omega\onto\Omega^*$ may be interpreted as saying that the
Cauchy-Riemann equation $\bar\partial f=0$ admits a diffeomorphic solution in the least squares sense, meaning that
$\|\bar \partial f\|_{L^2}$ assumes its minimum.
For this reason energy-minimal diffeomorphisms are known under the name {\it least squares conformal mappings}  in the computer graphics literature~\cite{LPRM,RvBBWRF}. They are also of great interest in the theory of nonlinear elasticity  due to  the principle of noninterpenetration of matter~\cite{Ba0, Sv}.

An energy-minimal diffeomorphism may fail to exist when a minimizing sequence collapses, at least partially, onto the boundary of $\Omega^*$. This phenomenon was observed in the papers~\cite{AIM,IO}  for  a pair of circular annuli. A related phenomenon occurs in free boundary problems for minimal graphs, where it is called {\it edge-creeping}~\cite{Ch, HN, Tu}. Since
the boundary of $\Omega^\ast$ plays a crucial role in the minimization of energy among diffeomorphisms $f \colon \Omega \onto \Omega^\ast$, our questions are essentially different from widely studied variational problems for mappings between Riemannian manifolds where the target is usually assumed to have no boundary~\cite{BCL,Jo,Job,Le}. We do not prescribe boundary values of $f$,  nor do we suppose that it has a continuous boundary extension.

Any energy-minimal diffeomorphism satisfies Laplace's equation, since one can perform first variations while preserving the diffeomorphism property. However, the existence of a harmonic diffeomorphism does not imply the existence of an energy-minimal one, see Example~\ref{ellip}. This is why our necessary condition for the existence of an energy-minimal diffeomorphism, Theorem~\ref{emexist}, is more restrictive than the corresponding result for harmonic diffeomorphisms
in~\cite{IKO4}.

As we have already pointed out,  energy-minimal diffeomorphisms for simply connected domains  are obtained from the Riemann mapping theorem. The doubly connected case, being next in the order of complexity, is the subject of our main result.

\begin{theorem} \label{mainexist}
Suppose that $\Omega$ and $\Omega^*$ are bounded doubly connected domains in $\C$ such that  $\Mod \Omega\le \Mod\Omega^*$.
Then there exists an energy-minimal diffeomorphism $f\colon \Omega\onto \Omega^*$, which is unique up to a conformal change of variables in $\Omega$.
\end{theorem}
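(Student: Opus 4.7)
The plan is to apply the direct method of the calculus of variations, and then to invoke regularity theory for harmonic mappings. Since $\E[f \circ \phi] = \E[f]$ for conformal $\phi$, I may pre-compose with a uniformization and assume $\Omega = \A := \{z : 1 < |z| < \rho\}$ with $\log \rho = \Mod \Omega$. A finite-energy diffeomorphism $\A \onto \Omega^*$ is produced by composing a smooth radial stretch $\A \to \{1 < |w| < \rho^*\}$ (with $\log \rho^* = \Mod \Omega^*$, possible because $\rho \le \rho^*$) with a conformal uniformization onto $\Omega^*$; thus $E_0 := \inf \E[f] < \infty$.

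For a minimizing sequence $\{f_n\}$, normalized modulo the rotational conformal automorphisms of $\A$, the energy bound yields a uniform $W^{1,2}$-bound, and the boundedness of $\Omega^*$ gives $\|f_n\|_\infty \le \diam \Omega^*$. A subsequence satisfies $f_n \rightharpoonup f$ weakly in $W^{1,2}(\A,\C)$ and strongly in every $L^p$; by lower semicontinuity $\E[f] \le E_0$ and $f(\A) \subseteq \overline{\Omega^*}$.

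The main obstacle is showing that $f$ is itself a diffeomorphism of $\A$ onto $\Omega^*$, and this is where the hypothesis $\Mod \Omega \le \Mod \Omega^*$ is essential. General weak-limit theorems for planar Sobolev homeomorphisms yield that $f$ is monotone (its point preimages are continua). Two forms of degeneracy must then be excluded: (a) $f$ could crush an essential subannulus of $\A$ onto a curve in $\overline{\Omega^*}$, and (b) $f$ could push a set of positive area from the interior of $\A$ onto $\partial \Omega^*$. Either collapse forces $f$ to factor, in a precise sense, through a doubly connected region of modulus strictly less than $\Mod \Omega^*$. Combined with the sharp lower bound \eqref{ener2} and a local replacement of the collapsed part by a conformal model, such a factorization would produce an admissible diffeomorphism with energy strictly below $E_0$, contradicting minimality. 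The modulus comparison $\Mod \Omega \le \Mod \Omega^*$ provides exactly the slack needed to perform this surgery without destroying admissibility; I expect the careful modulus-accounting and boundary-behavior analysis in this step to be the technical crux of the proof.

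Once $f$ is known to be a Sobolev homeomorphism onto $\Omega^*$, compactly supported perturbations $f + t\varphi$ with $\varphi \in C_c^\infty(\A,\C)$ remain in $\Omega^*$ for small $t$, so the Euler--Lagrange equation gives $\Delta f = 0$ weakly in $\A$; hence $f$ is real-analytic. Lewy's theorem then guarantees that an injective planar harmonic mapping has non-vanishing Jacobian, so $f$ is a real-analytic diffeomorphism. For uniqueness, if $f_1, f_2$ are two minimizers, the convexity of $w \mapsto |w|^2$ applied to the differentials together with equality in $\E[(f_1+f_2)/2] \le E_0$ (where the midpoint is admissible after small perturbation, since both minimizers are smooth diffeomorphisms onto the same target), coupled with the rigidity of harmonic diffeomorphisms between doubly connected domains sharing a Hopf differential, forces $f_2 = f_1 \circ \phi$ for a conformal automorphism $\phi$ of $\A$.
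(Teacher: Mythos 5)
Your outline stops exactly where the mathematical content of this theorem lies. The step you defer as ``the technical crux'' --- excluding collapse of the minimizing sequence and showing that the limit is a genuine homeomorphism onto $\Omega^*$, using the hypothesis $\Mod\Omega\le\Mod\Omega^*$ --- is the whole theorem, and the surgery you sketch (``the collapse forces $f$ to factor through a region of modulus less than $\Mod\Omega^*$, replace the collapsed part by a conformal model'') is not an argument: no admissible competitor is constructed, and it is not explained why its energy would drop strictly below $E_0$, nor why the modulus inequality enters with the correct sign. The actual mechanism is quite different and substantially longer: one minimizes in the weakly closed class of deformations (which already requires the boundary estimate of Proposition~\ref{trans} to keep weak limits from collapsing onto $\partial\Omega^*$), shows via Poisson modification and the Rad\'o--Kneser--Choquet argument (Lemmas~\ref{RKC} and~\ref{harmrep}, Proposition~\ref{gtheory}) that the minimizer restricted to $G=h^{-1}(\Omega^*)$ is a harmonic diffeomorphism onto $\Omega^*$, computes the Hopf differential $h_z\overline{h_{\bar z}}=c/z^2$ from inner variations (Lemma~\ref{ctheory}), pins down the sign of $c$ by the Reich--Walczak-type inequalities (Propositions~\ref{rwrho}, \ref{rwthe}, Corollary~\ref{cpositive}), and deduces strict monotonicity of $\tau\mapsto\EE(\tau,\Omega^*)$ below $\Mod\Omega^*$ (Proposition~\ref{q3}); only then does $\Mod G<\Mod\Omega\le\Mod\Omega^*$ yield a contradiction and force $G=\Omega$. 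None of this machinery, nor any substitute for it, appears in your proposal.

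Two further steps are unsound as written. First, harmonicity: once you only know $f$ is a Sobolev homeomorphism minimizing among homeomorphisms, the competitor $f+t\varphi$ is in general not injective, hence not admissible, so the Euler--Lagrange equation $\Delta f=0$ does not follow; this is precisely why the paper relies on inner variations and harmonic replacement rather than first variations (your argument is circular: you need $f$ to be a diffeomorphism to legitimize $f+t\varphi$, but you invoke harmonicity plus Lewy's theorem to prove $f$ is a diffeomorphism). Second, uniqueness: the midpoint $(f_1+f_2)/2$ of two homeomorphisms onto $\Omega^*$ need not be admissible, and ``admissible after small perturbation'' is unsupported; the paper's uniqueness instead comes from Proposition~\ref{needed}, an energy comparison through the Hopf differential and the Reich--Walczak inequalities in which equality forces the transition map to be conformal. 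Your claim that the weak limit of the minimizing homeomorphisms is monotone also needs the $c\delta$-uniform convergence and the equicontinuity of the boundary-distance functions developed in \S\ref{defsec} and the Appendix; plain weak $W^{1,2}$ convergence does not provide it.
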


Hereafter  $\Mod \Omega$ stands for the {\it conformal modulus} of $\Omega$. Any bounded doubly connected
domain $\Omega\subset\C$ is conformally equivalent to some \emph{circular annulus} $\{z \colon r< \abs{z}<R \}$ with $0\le r<R<  \infty$.
The ratio $R/r$, being independent of the choice of conformal equivalence, defines $\Mod \Omega:=\log R/r$. The conformal modulus is infinite  precisely when the bounded component of $\C \setminus \Omega$  degenerates to a point. We call such domain a {\it punctured domain}. Theorem~\ref{mainexist} has the following corollary.
 \begin{corollary}
For any bounded doubly connected domain $\Omega$ and any punctured domain $\Omega^\ast$ there exists an energy-minimal diffeomorphism $f\colon \Omega\onto \Omega^*$, which is unique up to a conformal change of variables in $\Omega$.
\end{corollary}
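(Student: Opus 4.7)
My plan is to deduce the corollary from Theorem~\ref{mainexist} by exhausting the punctured target by bounded doubly connected domains of finite modulus. Writing $\Omega^\ast = D \setminus \{p\}$ with $D = \Omega^\ast \cup \{p\}$ a bounded simply connected domain, I would set $\Omega^\ast_n = D \setminus \overline{B(p,1/n)}$. Each $\Omega^\ast_n$ is a bounded doubly connected domain with finite modulus, and $\Mod \Omega^\ast_n \nearrow \infty$. Hence for every sufficiently large $n$ the inequality $\Mod \Omega \le \Mod \Omega^\ast_n$ holds, and Theorem~\ref{mainexist} furnishes an energy-minimal diffeomorphism $f_n \colon \Omega \onto \Omega^\ast_n$, unique up to conformal automorphisms of $\Omega$.

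The next task is a uniform energy bound $\sup_n \E[f_n] < \infty$. For this I would exhibit an explicit competitor by composing a conformal map $\varphi \colon \Omega \to \{r < |z| < R\}$ onto a round annulus with the radial diffeomorphism $z \mapsto \rho(|z|)\,e^{i\arg z}$ of the annulus onto the punctured disk, where the optimal radial profile solves $s^2 \rho'' + s \rho' - \rho = 0$ with $\rho(r) = 1/n$ and $\rho(R) = 1$. A direct computation gives $\rho_n(s) = A_n s + B_n/s$ with coefficients having finite limits as $n \to \infty$, so the Dirichlet energies of these radial model maps are uniformly bounded. Transferring via a conformal equivalence of the punctured disk with $\Omega^\ast$ produces test diffeomorphisms $\Omega \onto \Omega^\ast_n$ of uniformly bounded energy, and minimality of $f_n$ forces $\sup_n \E[f_n] < \infty$. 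Hence $\{f_n\}$ is bounded in $W^{1,2}(\Omega, \overline{D})$, and a subsequential weak limit $f$ is harmonic with $\E[f] \le \liminf_n \E[f_n]$.

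The crux is to identify $f$ as a diffeomorphism of $\Omega$ onto $\Omega^\ast$. Two degenerations must be ruled out: partial collapse of the image of $f$ onto the outer boundary $\partial D$, and concentration of a subregion of $\Omega$ onto the puncture $p$. The first is excluded by the conformal modulus inequality driving Theorem~\ref{mainexist}, since $\Mod \Omega \le \Mod \Omega^\ast_n$ is preserved under the weak limit. The second---which I expect to be the main obstacle---requires showing that, although the inner boundary circles $\{|w - p| = 1/n\}$ shrink to $p$, their preimages under $f_n$ do not sweep out a set of positive measure in $\Omega$; this should follow from uniform H\"older or modulus-of-continuity estimates for harmonic $W^{1,2}$-bounded diffeomorphisms, of the kind already developed in the body of the paper. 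Once $f \colon \Omega \onto \Omega^\ast$ is known to be a harmonic diffeomorphism, its minimality is obtained by approximating any competitor $g \colon \Omega \onto \Omega^\ast$ by $g_n \colon \Omega \onto \Omega^\ast_n$ and comparing energies, and uniqueness up to a conformal change of variables in $\Omega$ is inherited from the uniqueness assertion for each $f_n$ together with the rigidity of harmonic mappings.
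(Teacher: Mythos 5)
The corollary needs no approximation argument at all: a punctured domain $\Omega^\ast$ is itself a bounded doubly connected domain, and by the paper's convention its conformal modulus is infinite, since the bounded component of $\C\setminus\Omega^\ast$ degenerates to a point. Hence the hypothesis $\Mod\Omega\le\Mod\Omega^\ast$ of Theorem~\ref{mainexist} is satisfied automatically for every bounded doubly connected $\Omega$, and the theorem applies verbatim; its proof via Theorem~\ref{q4} explicitly allows $\Mod\Omega^\ast=\infty$ (only $\Mod\Omega<\infty$ is used there, and Proposition~\ref{q3} covers all $0<\tau<\Mod\Omega^\ast$), while Lemma~\ref{laterr} records that the only degenerate pairing is $\Mod\Omega=\infty$ with $\Mod\Omega^\ast<\infty$. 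Your exhaustion of $\Omega^\ast$ by $\Omega^\ast_n=D\setminus\overline{B(p,1/n)}$ therefore sets out to re-prove, by a limit of solved problems, a case the theorem already settles directly.

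Taken on its own terms, the scheme also has real gaps. The uniform energy bound is not automatic: the Dirichlet energy is \emph{not} invariant under a conformal change of variables in the target, so transporting your radial competitors from the punctured disk to $\Omega^\ast_n$ through a Riemann map $\phi\colon\DD\to D$ introduces the factor $\abs{\phi'}^2$, which need not be bounded near $\partial D$; this can be repaired, but not by the one-line appeal you make. More seriously, the passage to the weak limit is exactly where the difficulty of the whole paper lies: a weak $W^{1,2}$-limit of diffeomorphisms need not be injective or surjective, and neither the exclusion of collapse onto $\partial D$ or onto the puncture, nor the assertion that the limit minimizes energy among all diffeomorphisms $\Omega\onto\Omega^\ast$ (this would require something like $\EE(\Omega,\Omega^\ast_n)\to\EE(\Omega,\Omega^\ast)$, which you never establish), nor uniqueness ``inherited'' from the uniqueness of each $f_n$, follows from the estimates you invoke. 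Those are precisely the points the deformation class $\D(\Omega,\Omega^\ast)$, the monotonicity of the minimal energy function (Proposition~\ref{q3}), and Proposition~\ref{needed} were built to handle, and they are available for the pair $(\Omega,\Omega^\ast)$ itself without any limiting procedure.
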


In the converse direction we show (Theorem~\ref{emexist}) that there exists no energy-minimal diffeomorphism when $\Mod \Omega^\ast \le \Phi (\Mod \Omega)$.
Here $\Phi \colon (0, \infty) \to (0, \infty)$ is a certain function asymptotically equal the identity at infinity, $\lim\limits_{t \to \infty} \Phi(t)/t=1$.
It is in this asymptotic sense that Theorem~\ref{mainexist} is sharp.
 It is rather surprising that our existence result for energy-minimal diffeomorphisms relies only on the conformal modulus of the target. Indeed, the energy minimization problem is  invariant only with respect to a conformal change of variable in the domain, not in the target.

Yet in other perspectives, the classical Teichm\"uller theory is concerned with the existence of quasiconformal mappings $g\colon \Omega^\ast \onto \Omega$ with smallest $L^\infty$-norm of the distortion function
\[K_g(w)= \frac{\abs{Dg(w)}^2}{2\, J_g(w)}, \qquad \mbox{ a.e. } w \in \Omega^\ast. \]
Analogous questions about $L^1$-norm of $K_g$ lead to minimization of the Dirichlet energy of the inverse mapping  via the transformation formula
\begin{equation}\label{identity}
\norm{K_g}_{L^1(\Omega^\ast)}= \E[f], \qquad \mbox{ where } f=g^{-1} \colon \Omega \onto \Omega^\ast
\end{equation}
For rigorous statements let us recall that a homeomorphism $g\colon \Omega^\ast \onto \Omega$ of Sobolev class $W^{1,1}(\Omega^\ast)$ has integrable distortion if
\begin{equation}\label{finitedist}
\abs{Dg(w)}^2 \le 2 K(w)\, J_g(w) \qquad \mbox{ a.e. in } \Omega^\ast
\end{equation}
for some $K\in L^1(\Omega^\ast)$. The smallest such $K \colon \Omega^\ast \to [1, \infty)$, denoted by $K_g$, is referred to as the {distortion function} of $g$.

It turns out that the inverse of any mapping with integrable distortion has finite Dirichlet energy and the identity~\eqref{identity} holds. As a consequence of Theorem~\ref{mainexist} we obtain the following result.
\begin{theorem}\label{thmintdist}
Let $\Omega$ and $\Omega^\ast$ be bounded doubly connected domains in $\C$ such that $\Mod \Omega \le \Mod \Omega^\ast$. Among all homeomorphisms $g \colon \Omega^\ast \onto \Omega$ there exists, unique up to a conformal automorphism of $\Omega$, mapping of smallest $L^1$-norm of the distortion.
\end{theorem}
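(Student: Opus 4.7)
The plan is to transfer the distortion-minimization problem to energy-minimization via the identity \eqref{identity} and then invoke Theorem \ref{mainexist}. Since $\norm{K_g}_{L^1(\Omega^\ast)}$ is infinite unless $g$ has integrable distortion, one may restrict attention to finite-distortion homeomorphisms $g \colon \Omega^\ast \onto \Omega$; by the assertion preceding the theorem, these are precisely the maps whose inverses $f = g^{-1}$ are Sobolev homeomorphisms of finite Dirichlet energy, with the two functionals matching via \eqref{identity}. The result will then follow by transporting the energy-minimal diffeomorphism of Theorem \ref{mainexist} through this correspondence.

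First I would let $f_0$ be the energy-minimal diffeomorphism supplied by Theorem \ref{mainexist}, unique up to precomposition with a conformal automorphism of $\Omega$, and set $g_0 := f_0^{-1}$. Since $f_0$ is a genuine diffeomorphism with $J_{f_0} > 0$ pointwise, $g_0$ is also a diffeomorphism, has integrable distortion, and the identity \eqref{identity} gives $\norm{K_{g_0}}_{L^1(\Omega^\ast)} = \E[f_0]$.

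To establish minimality, pick any competing homeomorphism $g \colon \Omega^\ast \onto \Omega$. If $\norm{K_g}_{L^1(\Omega^\ast)} = \infty$ there is nothing to prove; otherwise $f := g^{-1}$ is a Sobolev homeomorphism $\Omega \onto \Omega^\ast$ of finite energy. Theorem \ref{mainexist}, which asserts that $f_0$ minimizes $\E$ over all homeomorphisms of finite energy and not merely diffeomorphisms, yields $\E[f_0] \le \E[f]$; applying \eqref{identity} on both sides gives $\norm{K_{g_0}}_{L^1(\Omega^\ast)} \le \norm{K_g}_{L^1(\Omega^\ast)}$, as required.

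For uniqueness, if $g_1$ also attains the minimum, then $f_1 := g_1^{-1}$ is itself an energy minimizer, so by the uniqueness clause of Theorem \ref{mainexist} we have $f_1 = f_0 \circ \phi$ for some conformal automorphism $\phi$ of $\Omega$; inverting yields $g_1 = \phi^{-1} \circ g_0$, which shows that $g_1$ and $g_0$ differ by a conformal automorphism of $\Omega$. The main conceptual hurdle in this reduction is ensuring that the inversion correspondence $g \leftrightarrow g^{-1}$ really does provide a functional-preserving bijection between the competing classes on the two sides; the pre-theorem assertion handles the one direction actually used above (from integrable distortion to finite energy), so once that is in hand the remainder is an essentially mechanical application of Theorem \ref{mainexist}.
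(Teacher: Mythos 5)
Your proposal matches the paper's own argument: both transfer the problem through the inversion correspondence and the identity \eqref{identity}, apply the minimality of the energy-minimal diffeomorphism from Theorem~\ref{mainexist} (in its refined form, Theorem~\ref{q4}, which gives minimality over all $W^{1,2}$ homeomorphisms) to get $\norm{K_{g_\circ}}_{L^1}\le\norm{K_g}_{L^1}$, and derive uniqueness from the uniqueness clause of that theorem by inverting. The argument is correct and essentially identical to the paper's proof.
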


We conclude this introduction with a strategy of the proof of Theorem~\ref{mainexist}. The natural setting for our minimization problem is the Sobolev space $W^{1,2}(\Omega)$. In this paper  functions in the  Sobolev spaces are complex-valued.  Let us reserve the notation $\Ho^{1,2}(\Omega,\Omega^*)$ for the set of
all sense-preserving  $W^{1,2}$-homeomorphisms $h\colon \Omega\onto\Omega^*$. When this set is nonempty, we define
\begin{equation}\label{en2}
\EE_\Ho(\Omega,\Omega^*)= \inf \{\E[h]\colon h\in \Ho^{1,2}(\Omega,\Omega^*)\}.
\end{equation}
By virtue of the density of diffeomorphisms in $\Ho^{1,2}(\Omega,\Omega^*)$, see~\cite{IKOh}, the minimization of energy among sense-preserving diffeomorphisms leads to the same value $\EE_\Ho(\Omega,\Omega^*)$.
A homeomorphism $h\in \Ho^{1,2}(\Omega,\Omega^*)$ is \emph{energy-minimal} if it attains the infimum in~\eqref{en2}.
Let us emphasize that the set $\Ho^{1,2}(\Omega,\Omega^*)\subset W^{1,2}(\Omega)$ is unbounded.
Even bounded subsets of $\Ho^{1,2}(\Omega,\Omega^*)$ are lacking compactness, due to the loss of injectivity in passing to a limit
of homeomorphisms. One way out of this difficulty is to consider the weak closure of $\Ho^{1,2}(\Omega,\Omega^*)\cap \mathcal B$ where $\mathcal B$ is a sufficiently large ball in $W^{1,2}(\Omega)$ whose size depends only on $\EE_\Ho(\Omega,\Omega^*)$. This is the approach undertaken in~\cite{IOt,JS}. However, the presence of $\mathcal B$
creates problems of its own.  For instance, the resulting class of mappings is not closed under compositions with
self-diffeomorphisms of $\Omega$; inner variation of such mappings would be inadmissible.

That is why we introduce the class of so-called \emph{deformations}. These are
sense-preserving surjective mappings of the Sobolev class $W^{1,2}$ that can be approximated by homeomorphisms in a certain way. The precise definition is given in \S\ref{defsec}.
A deformation is not necessarily injective. In addition, an energy-minimal deformation need not be harmonic, since one cannot perform first variations $f+\epsilon \varphi$ within the class of deformations. This is why we rely on inner variations, which yield that the Hopf differential (\S\ref{hopsec}) of an energy-minimal deformation is holomorphic in $\Omega$ and real on its boundary.
We gain additional information about the Hopf differential from the Reich-Walczak-type inequalities (\S\ref{reisec}) which is where the conformal moduli of $\Omega$ and $\Omega^*$ enter the stage.

The  crucial idea of the proof of Theorem~\ref{mainexist} is to consider a one-parameter family of variational problems in which $\Omega$ changes continuously while $\Omega^\ast$ remains fixed. We establish
strict monotonicity of the minimal energy as a function of the conformal modulus of $\Omega$ (\S\ref{monsec}). The proof of Theorem~\ref{mainexist} together with its more refined variant, Theorem~\ref{q4},  is completed in~\S\ref{exisec}.  The proof of the nonexistence theorem, Theorem~\ref{emexist}, is presented in~\S\ref{nonsec}.
The interested reader is invited to look upon the open questions collected in~\S\ref{quesec}.

\section{Statements}\label{stasec}

A homeomorphism of a planar domain is either sense-preserving or sense-reversing. For  homeomorphisms of the Sobolev class $W^{1,1}_{\rm loc}(\Omega)$
this implies that the Jacobian determinant does not change sign: it is either
nonnegative or nonpositive at almost every point~\cite[Theorem 3.3.4]{AIMb}, see also~\cite{HM}. The homeomorphisms considered in this paper are sense-preserving unless stated otherwise.

Let $\Omega$ and $\Omega^*$ be bounded domains in $\C$. To every mapping $f \colon \Omega \to \overline{\Omega^\ast}$ we
associate a boundary distance function $\delta_f(z)=\dist (f(z), \partial \Omega^\ast)$ which is set to $0$ on the boundary of $\Omega$.

The following concept, which interpolates between $c$-uniform (i.e., uniform on compact subsets) and uniform convergence, proves to be effective.
\begin{definition}
A sequence of mappings $h_j\colon \Omega\to \overline{\Omega^*}$ is said to converge \emph{$c\delta$-uniformly} to $h\colon \Omega\to \overline{\Omega^*}$
if
\begin{itemize}
\item $h_j\to h$ uniformly on compact subsets of $\Omega$ and
\item $\delta_{h_j} \to  \delta_h$ uniformly on $\overline{\Omega}$.
\end{itemize}
We designate it as $h_j\cto h$.
\end{definition}

\begin{definition}\label{defdef}
A mapping $h\colon \Omega\to\overline{\Omega^*}$ is called a \emph{deformation} if
\begin{itemize}
\item\label{dd1} $h\in W^{1,2}(\Omega)$;
\item\label{dd2} The Jacobian $J_h:=\det Dh$ is nonnegative  a.e. in $\Omega$;
\item\label{dd2p} $\int_\Omega J_h \le \abs{\Omega^\ast}$;
\item\label{dd3} there exist sense-preserving homeomorphisms $h_j\colon \Omega\onto\Omega^*$, called an \emph{approximating sequence},  such that $h_j\cto h$  on $\Omega$.
\end{itemize}
The set of deformations $h\colon \Omega \to \overline{\Omega^\ast}$ is denoted by $\D(\Omega,\Omega^*)$.
\end{definition}

The first thing to note is $\Ho^{1,2}(\Omega,\Omega^*)\subset \D(\Omega,\Omega^*)$.
Outside of some degenerate cases, the set of deformations is nonempty by Lemma~\ref{laterr}
and is  closed under weak limits in $W^{1,2}(\Omega)$ by Lemma~\ref{wclosed}.
Define
\begin{equation}\label{en1}
\EE(\Omega,\Omega^*)= \inf \{\E[h]\colon h\in \D(\Omega,\Omega^*)\}
\end{equation}
where $\E[h]$ is as in~\eqref{ener1}.
A deformation that attains the infimum in~\eqref{en1} is called \emph{energy-minimal}.
It is obvious that $\EE_\Ho(\Omega,\Omega^*)\ge \EE(\Omega,\Omega^*)$, but whether the equality holds is not clear. We are now in the position to state the existence Theorem~\ref{mainexist} more precisely.

\begin{theorem}\label{q4}
Suppose that $\Omega$ and $\Omega^*$ are bounded doubly connected domains in $\C$ such that $\Mod \Omega\le \Mod\Omega^*$.
There exists a diffeomorphism $h\in \Ho^{1,2}(\Omega,\Omega^*)$ that minimizes the energy among
all deformations; that is, $\E[h]=\EE(\Omega,\Omega^*)$ and hence, $\EE_\Ho(\Omega,\Omega^*) = \EE(\Omega,\Omega^*)$. Moreover, $h$ is unique up to a conformal automorphism of $\Omega$.
\end{theorem}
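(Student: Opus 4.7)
The plan is to obtain an energy-minimal deformation by the direct method, extract geometric information from inner variations, and then use the strict monotonicity of the minimal energy in $\Mod\Omega$ to force the minimizer to actually be a diffeomorphism.

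First I would pick a minimizing sequence $h_j\in\D(\Omega,\Omega^*)$. The bound on $\E[h_j]$ gives a uniform $W^{1,2}$ bound, so after extracting a weakly convergent subsequence the weak limit $h\in W^{1,2}(\Omega)$ still lies in $\D(\Omega,\Omega^*)$ by Lemma~\ref{wclosed}, and weak lower semicontinuity of the Dirichlet integral yields $\E[h]=\EE(\Omega,\Omega^*)$. Because precomposition with a self-diffeomorphism of $\Omega$ preserves both $\D(\Omega,\Omega^*)$ and $\E$, the minimizer admits inner variations. Carrying these out, as in \S\ref{hopsec}, forces the Hopf differential $\Phi = h_z\,\overline{h_{\bar z}}$ to be holomorphic in $\Omega$, and, since no boundary values are prescribed, $\Phi(z)\,dz^2$ is real on $\partial\Omega$. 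Pulling back to the model annulus $A_R=\{1<|z|<R\}$, $R=e^{\Mod\Omega}$, the only quadratic differentials with these two properties are $c\,dz^2/z^2$ with $c\in\R$, so $\Phi(z) = c/z^2$. The Reich--Walczak-type inequalities of \S\ref{reisec} then tie $c$ to $\Mod\Omega$ and $\Mod\Omega^*$, and the hypothesis $\Mod\Omega\le\Mod\Omega^*$ pins down its sign.

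The crux is to rule out any collapse of $h$. Suppose toward contradiction that $h$ sends a subannulus of $A_R$ into $\partial\Omega^*$, or that $h(\Omega)$ omits a round subannulus of $\Omega^*$. Using the approximating sequence $h_j\cto h$ together with the explicit form $\Phi=c/z^2$ (which makes the noncollapsed part of $h$ a controlled quasiconformal mapping), one may decompose $A_R$ conformally into a ``collapsed'' subannulus, on which the Dirichlet integrand essentially vanishes, and its complement, which is an annulus of strictly smaller conformal modulus. The restriction of $h$ to this complement is then a deformation into $\Omega^*$ of energy at most $\E[h]=\EE(\Omega,\Omega^*)$, contradicting the strict monotonicity of $M\mapsto \EE(A_M,\Omega^*)$ proved in \S\ref{monsec}. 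Hence $h$ is a homeomorphism; outer variations, now admissible, show that $h$ is harmonic, and Lewy's theorem upgrades this to a real-analytic diffeomorphism.

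I expect this rigidity step to be the main obstacle: converting a possibly irregular collapse of the deformation $h$ into a quantitative modulus loss sharp enough to be defeated by the monotonicity in \S\ref{monsec}. Once $h$ is known to be a diffeomorphism, uniqueness is almost automatic: two minimizers $h_1$ and $h_2$ produce the same real constant $c$ (determined by $\Mod\Omega$ and $\Omega^*$ via the Reich--Walczak identity), hence share the same Hopf differential on $A_R$; two harmonic diffeomorphisms of the annulus onto $\Omega^*$ with the same Hopf differential and the same boundary behavior must differ by a rotation of $A_R$, i.e.\ by a conformal automorphism of $\Omega$.
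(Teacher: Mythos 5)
Your overall architecture (direct method in $\D(\Omega,\Omega^*)$ via Lemma~\ref{wclosed} and lower semicontinuity, inner variations giving the Hopf differential $c\,dz^2/z^2$, Reich--Walczak inequalities fixing the sign of $c$, and strict monotonicity of $\tau\mapsto\EE(\tau,\Omega^*)$ as the engine) is the same as the paper's, but the step you yourself flag as the crux --- upgrading the minimizer to a diffeomorphism --- is left with genuine gaps, and the way you sketch it would not go through. The set where $h$ collapses, $\Omega\setminus G$ with $G=h^{-1}(\Omega^*)$, is not a subannulus in general, and the Dirichlet integrand does \emph{not} ``essentially vanish'' there: only $J_h=0$ on that set (Lemma~\ref{multilemma}), while for a stationary deformation $\abs{Dh}^2=4\abs{c}/\abs{z}^2$ off $G$. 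More seriously, ruling out collapse does not by itself make $h$ injective (a deformation can crush continua to points while still covering $\Omega^*$ once), and ``outer variations, now admissible'' is false even for a homeomorphic minimizer, since $h+\epsilon\varphi$ need not stay in the admissible class; so neither injectivity nor harmonicity is obtained the way you claim. The paper closes exactly this hole with harmonic replacement: the Poisson modification together with a Rad\'o--Kneser--Choquet-type lemma (Lemma~\ref{RKC}, Lemma~\ref{harmrep}) shows that any energy-minimal deformation is locally a harmonic diffeomorphism on $G$, the monotonicity of $\bow{h}$ (Lemma~\ref{mono}, Lemma~\ref{goodset}) makes $h|_G$ a global diffeomorphism onto $\Omega^*$ and makes $G$ a doubly connected domain separating $\partial\Omega$, and then the Lehto--Virtanen inequality $\Mod G\le\Mod\Omega$ (equality iff $G=\Omega$) combined with strict monotonicity of $\EE(\cdot,\Omega^*)$ forces $G=\Omega$ --- no quantitative ``modulus loss from collapse'' estimate beyond this is needed, only $\int_G\abs{Dh}^2\le\int_\Omega\abs{Dh}^2$.

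The uniqueness argument is also a gap. You assert that two minimizers share the same constant $c$ ``via the Reich--Walczak identity'' (those results are inequalities, and at this stage nothing shows $c$ is independent of the minimizer), and that two harmonic diffeomorphisms of $A(\tau_\circ)$ onto $\Omega^*$ with the same Hopf differential must differ by a rotation --- with no boundary values prescribed this rigidity is not a standard fact and you give no proof. The paper instead proves the energy comparison of Proposition~\ref{needed}: for an energy-minimal deformation $h$ with Hopf differential $c/z^2$ and any diffeomorphism $g\colon A(\tau)\onto\Omega^*$, one has $\E[g]-\E[h]\ge 8\pi c(\tau_\circ-\tau)$, with equality (when $h$ is a diffeomorphism) only if $\tau=\tau_\circ$ and $g^{-1}\circ h$ is conformal; applying this with $g$ a second minimizer yields uniqueness up to a conformal automorphism of $\Omega$. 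You would need to supply an argument of this kind (or an equivalent substitute) rather than appeal to Hopf-differential rigidity.
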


In opposite direction, for every $\epsilon>0$ there exists a pair of smooth bounded doubly connected domains $\Omega,\Omega^*$ with $\Mod\Omega^*>\log \cosh \Mod \Omega-\epsilon$, for which  there is no energy-minimal homeomorphism in $\Ho^{1,2}(\Omega,\Omega^*)$. See~\cite[Corollary 3]{AIM} or Example~\ref{ellip}.
More generally, we have the following counterpart to Theorem~\ref{q4}.

\begin{theorem}\label{emexist}
There is a nondecreasing function $\Upsilon\colon (0,\infty)\to (0,1)$ such that $\lim\limits_{\tau\to\infty}\Upsilon(\tau)= 1$ and the following holds.
Whenever two bounded doubly connected domains  $\Omega$ and $\Omega^*$ in $\C$ admit an energy-minimal diffeomorphism $h\colon \Omega\onto\Omega^*$,
we have
\begin{equation}\label{eme1}
\Mod\Omega^*\ge (\Mod\Omega)\cdot \Upsilon(\Mod\Omega).
\end{equation}
Specifically, one can take
\begin{equation}\label{specific}
\begin{split}
\Upsilon(\tau)=\exp\left(-\frac{\pi^2}{2\tau}\right)\cdot& \Lambda\left(\coth \frac{\pi^2}{2\tau}\right),\qquad \quad  \qquad \qquad
{\rm where }\\  &\Lambda(t)= \frac{\log t-\log(1+\log t)}{2+\log t},\quad  \ t\ge 1.
\end{split}
\end{equation}
\end{theorem}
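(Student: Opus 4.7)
The plan is to reduce to the case where $\Omega$ is a round annulus, exploit the inner-variational equations to pin down the Hopf differential of $h$ explicitly, and then convert this information into a quantitative modulus bound via the Reich-Walczak-type inequalities developed in \S\ref{reisec}.

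\textbf{Step 1 (Normalization).} Since $\E[h]$ is invariant under a conformal change of variable in $\Omega$, we may precompose $h$ with a conformal equivalence and assume $\Omega = \{z\colon r<|z|<1\}$ with $r=e^{-\Mod\Omega}$. The target $\Omega^\ast$ plays only through its modulus, so no normalization of $\Omega^\ast$ is required. After this reduction $h$ is still an energy-minimal diffeomorphism onto $\Omega^\ast$.

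\textbf{Step 2 (Structure of the Hopf differential).} Inner variations preserve the diffeomorphism property, and $h$ is critical for them, so its Hopf differential $\varphi:=h_z\overline{h_{\bar z}}$ is holomorphic on $\Omega$ (\S\ref{hopsec}). Because no boundary conditions are imposed, one may moreover admit inner variations generated by vector fields tangent to $\partial\Omega$; the corresponding Euler-Lagrange equation forces $z^2\varphi(z)$ to be real on both circles $|z|=r$ and $|z|=1$. Iterated Schwarz reflection across the two boundary circles extends $z^2\varphi(z)$ to a bounded holomorphic function on $\C\setminus\{0\}$ satisfying $z^2\varphi(z) = (r^2 z)^2\varphi(r^2 z)$; a Laurent expansion then forces $\varphi(z)=c/z^2$ for a single real constant $c$. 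If $c=0$, then $h$ is conformal and $\Mod\Omega=\Mod\Omega^\ast$, so~\eqref{eme1} holds trivially; hence assume $c\ne 0$.

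\textbf{Step 3 (Reich-Walczak bounds).} The identities $|h_z||h_{\bar z}|=|c|/|z|^2$ and $|h_z|^2-|h_{\bar z}|^2=J_h$ express the Beltrami coefficient and the distortion of $h$ pointwise in terms of $c$, $J_h$, and $|z|$. Decomposing $\Omega$ into concentric circles $|z|=t$, $t\in(r,1)$, and applying the modulus inequalities of \S\ref{reisec} to the foliation by these circles and its orthogonal foliation by radial segments, one obtains a pair of bounds for $\Mod\Omega^\ast$ expressed through $|c|$, the ring average of $J_h$, and $\Mod\Omega$. Jensen's inequality removes $J_h$ in favor of $|\Omega^\ast|$, after which the inequality depends only on $|c|$, $\Mod\Omega$, and $\Mod\Omega^\ast$. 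Optimizing out the free parameter $|c|$ produces a single inequality of the form~\eqref{eme1}; the explicit calculation identifies the optimum as $\Upsilon(\tau)=\exp(-\pi^2/2\tau)\cdot\Lambda(\coth(\pi^2/2\tau))$, the factor $e^{-\pi^2/2\tau}$ arising from the Schwarz-reflection periodicity established in Step 2 and $\Lambda(\coth(\cdot))$ encoding the Reich-Walczak optimization.

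The main obstacle will be Step 3: the Reich-Walczak inequality in its raw form only yields a constant-factor loss between $\Mod\Omega$ and $\Mod\Omega^\ast$, not a bound with $\Upsilon(\tau)\to 1$ as $\tau\to\infty$. Producing the sharp asymptotic requires fully exploiting the rigidity $\varphi=c/z^2$ from Step 2 — so that the extremal length integrals separate into one-dimensional integrals that can be evaluated exactly — and then carrying through the optimization against $c$ without relaxing any estimate. This is the only delicate computation; the remaining steps are structural and rely on results stated earlier in the paper.
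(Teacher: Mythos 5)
Your Steps 1--2 do match the paper: after a conformal normalization $\Omega=A(R^{-1},R)$, the stationarity of $h$ under inner variations gives $h_z\overline{h_{\bar z}}\equiv c/z^2$ with $c\in\R$ (Lemma~\ref{ctheory}; the paper gets the constancy of $z^2\varphi$ from the maximum principle rather than reflection, but that is immaterial). The case $c\ge 0$ is then settled by Proposition~\ref{rwthe}, which gives the stronger conclusion $\Mod\Omega^*\ge\Mod\Omega$. The genuine gap is your Step 3, i.e.\ the case $c<0$, which is exactly where the theorem has content. There the Reich--Walczak machinery cannot deliver a lower bound on $\Mod\Omega^*$: from $\abs{h_N}^2\le J_h$ Proposition~\ref{rwrho} only yields the \emph{upper} bound $\Mod\Omega^*\le\Mod\Omega$, while a lower bound via Proposition~\ref{rwthe} would require an upper bound on $\int_\Omega K_T^h\,\abs{z}^{-2}$. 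But when $c<0$ one has $K_T^h=\abs{h_T}/\abs{h_N}$, which blows up wherever the Jacobian degenerates, and neither the energy nor $\abs{\Omega^*}$ controls it; Jensen's inequality runs in the wrong direction for this purpose. Moreover $c$ is not a free parameter to be ``optimized out'': it is determined by $h$ (ultimately by $\Mod\Omega$ and $\Omega^*$, cf.\ \S\ref{consec}), so no optimization over $c$ can be performed, and no such computation produces the specific function $\Upsilon$ in~\eqref{specific}.

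What the paper actually does for $c<0$ is of a different nature, exploiting a global function-theoretic constraint rather than foliation estimates. Writing $c=-b^2$, the second complex dilatation $\nu=\overline{h_{\bar z}}/h_z$ is holomorphic from $\Omega$ into $\DD$ and nonvanishing, so it has the single-valued square root $\omega=ib/(zh_z)$. Integrating $dh$ over the unit circle gives $\int_{\T}(\omega^{-1}+\overline{\omega})\,dz/z=0$, which forces the image of $\T$ under $\omega/\abs{\omega}$ to leave every open half-circle; hence there are $z_1,z_2\in\T$ with $\omega(z_1)/\omega(z_2)<0$. The explicit Green's function lower bound $\G_\Omega(z_1,z_2)\ge\log\coth\bigl(\pi^2/(4\log R)\bigr)$ together with the subordination principle $\G_\Omega(z_1,z_2)\le\G_{\DD}(\omega(z_1),\omega(z_2))$ and the opposite-sign condition yields $\abs{\omega(z_1)}\le\tanh\bigl(\pi^2/(8\log R)\bigr)$; this is the source of the factor $\exp(-\pi^2/(2\tau))$, not any ``Schwarz-reflection periodicity.'' Finally one passes to $g=\phi\circ h$ with $\phi$ affine chosen so that $g_{\bar z}(z_1)=0$ and invokes the modulus estimates of the companion paper \cite{IKO4}, which supply $\Mod g(\Omega)\ge\Mod\Omega\cdot\Lambda\bigl(\coth(\pi^2/(2\tau))\bigr)$; comparing $\Mod h(\Omega)$ with $\Mod g(\Omega)$ through the bound on $\abs{\omega(z_1)}$ gives~\eqref{specific}. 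None of these ingredients (the square root $\omega$, the period argument, Green's function and subordination, the affine renormalization) appear in your plan, and without them your Step 3 does not close.
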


In~\S\ref{quesec} we conjecture that~\eqref{eme1} can be specified as $\Mod\Omega^*\ge \log\cosh\Mod\Omega$, which would be the sharp bound, known to be true for circular annuli~\cite{AIM}.

\section{Basic properties of deformations}\label{defsec}

In this section we establish the essential properties of the class of deformations $\D(\Omega,\Omega^\ast)$ introduced in Definition~\ref{defdef}. Among them is that $\D(\Omega,\Omega^\ast)$ is sequentially weakly closed and its members satisfy a change of variable formula~\eqref{useful}.

Deformations enjoy two distinct properties, both of which are commonly known in literature as {monotonicity}.
The topological   monotonicity is the subject of Lemma~\ref{mono}. To avoid confusion, in the following definition we use the term \emph{oscillation property}.

\begin{definition} \label{diamdefin}
Let $U$ be an open subset of $\C$. A continuous function $f\colon U\to\C$ is said to have \emph{oscillation property} if for every compact set $K\subset U$ we have
\begin{equation}\label{diamdef}
\diam f(K) = \diam f(\partial K).
\end{equation}
Note that for real-valued functions~\eqref{diamdef} can be stated as
\[
\min_{K}f=\min_{\partial K} f \le \max_{\partial K} f = \max_{K}f.
\]
\end{definition}

The relevance of this property to Sobolev mappings hinges on the following continuity estimate. If $f\in W^{1,2}(U)$ has the oscillation property, then
\begin{equation}\label{continuity}
\abs{f(z_1)-f(z_2)}^2 \le \frac{C\int_{2D}\abs{D f}^2 }{\log \big(e+\frac{\diam D}{\abs{z_1-z_2}}\big)},
\end{equation}
with $z_1,z_2\in D$ for a pair of concentric disks $D\subset 2D\subset U$. The constant $C$ is universal.
See, e.g., Corollary 7.5.1~\cite{IMb}.

The oscillation property~\eqref{diamdef} obviously holds for all homeomorphisms and is preserved under $c$-uniform limits. Therefore, deformations satisfy~\eqref{diamdef} and consequently~\eqref{continuity}: the local modulus
of continuity of a deformation is controlled by its energy.

Our approach to energy-minimal deformations involves the comparison of energies of
$h$ and $h\circ f$, where $f$ is a diffeomorphism or (more generally) a quasiconformal homeomorphism~\cite{Ahb, AIMb, LVb}.
It is important to observe that $h\circ f$ is also a deformation.

\begin{lemma}\label{KM}
Let $\Omega$, $\Omega^\ast$ and $\Omega_\circ$ be bounded domains in $\C$.
If $f\colon \Omega_\circ \onto \Omega$ is a quasiconformal mapping  then for any $h\in \D(\Omega, \Omega^\ast)$ we have $h\circ f \in \D(\Omega_\circ, \Omega^\ast)$.
\end{lemma}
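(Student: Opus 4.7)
The plan is to verify each of the four defining properties of Definition~\ref{defdef} for the composition $h\circ f\colon \Omega_\circ\to\overline{\Omega^*}$. The first three---Sobolev regularity, nonnegativity of the Jacobian, and the area bound---follow quickly from the chain rule together with the change of variables formula, both available for quasiconformal mappings. The fourth condition, the existence of a $c\delta$-approximating sequence of homeomorphisms, is the delicate one: the natural candidate is $g_j := h_j\circ f$, where $h_j\cto h$ is the given approximating sequence for $h$. Each such $g_j$ is plainly a sense-preserving homeomorphism from $\Omega_\circ$ onto $\Omega^*$, but the required \emph{uniform} convergence of $\delta_{g_j}$ on the closed set $\overline{\Omega_\circ}$ is not transparent, and I expect this to be the main obstacle.

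For conditions (i)--(iii), let $K_f$ denote the distortion of $f$, so that $|Df|^2\le K_f\,J_f$ almost everywhere. The pointwise chain rule gives
\[
|D(h\circ f)(z)|^2 \le |Dh(f(z))|^2\,|Df(z)|^2 \le K_f\,|Dh(f(z))|^2\,J_f(z),
\]
and integrating this via the area formula for $f$ yields $\E[h\circ f]\le K_f\,\E[h]<\infty$, so $h\circ f\in W^{1,2}(\Omega_\circ)$. The Jacobian identity $J_{h\circ f}(z)=J_h(f(z))\,J_f(z)$ immediately gives nonnegativity a.e., and a second application of the area formula gives
\[
\int_{\Omega_\circ} J_{h\circ f}=\int_\Omega J_h\le |\Omega^*|.
\]

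For (iv), uniform convergence of $g_j=h_j\circ f$ on compact subsets of $\Omega_\circ$ is immediate: if $K\subset\Omega_\circ$ is compact, then $f(K)$ is a compact subset of $\Omega$ on which $h_j\to h$ uniformly. The bulk of the work lies in establishing $\delta_{g_j}\to\delta_{h\circ f}$ uniformly on all of $\overline{\Omega_\circ}$. The key boundary estimate, which I would derive from a compactness argument applied to $f^{-1}$, is that for every $\eta>0$ there exists $\delta>0$ such that $\dist(z,\partial\Omega_\circ)<\delta$ implies $\dist(f(z),\partial\Omega)<\eta$; otherwise a sequence $z_n\to\partial\Omega_\circ$ would have $f(z_n)$ accumulating at an interior point of $\Omega$, contradicting continuity of $f^{-1}$. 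Since $\delta_h$ is continuous on $\overline{\Omega}$ and vanishes on $\partial\Omega$, it is uniformly small in a thin collar of $\partial\Omega$. Combining this with the uniform convergence $\delta_{h_j}\to\delta_h$ on $\overline{\Omega}$, one splits $\overline{\Omega_\circ}$ into an interior compact piece, where the substitution $w=f(z)$ reduces everything to uniform convergence on a compact subset of $\Omega$, and a thin boundary collar, where both $\delta_{g_j}(z)$ and $\delta_{h\circ f}(z)$ are forced to be small. This boundary argument is the only genuinely nontrivial step, and it crucially uses the continuity of $f^{-1}$---a direct consequence of the quasiconformality of $f$.
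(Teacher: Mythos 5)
Your proof is correct and takes essentially the same route as the paper: conditions (i)--(iii) via the chain rule, the distortion inequality and the change of variables for quasiconformal maps, and condition (iv) via the candidate approximating sequence $h_j\circ f$. The only difference is that the step you flag as the main obstacle is in fact immediate: since $\delta_{h_j\circ f}=\delta_{h_j}\circ f$ and $\delta_{h\circ f}=\delta_h\circ f$ on $\Omega_\circ$ (and all these functions vanish on $\partial\Omega_\circ$ by definition), one has $\sup_{\overline{\Omega_\circ}}\abs{\delta_{h_j\circ f}-\delta_{h\circ f}}\le \sup_{\overline{\Omega}}\abs{\delta_{h_j}-\delta_h}\to 0$, which is why the paper dismisses this as a purely topological fact requiring only that $f$ be a homeomorphism; your boundary-collar argument using continuity of $f^{-1}$ is valid but unnecessary.
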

\begin{proof} Since a $K$-quasiconformal mapping distorts the Dirichlet
integral only by a factor up to $K$, it follows that $h\circ f\in W^{1,2}(\Omega)$.
That the Jacobian of $h \circ f$ is nonnegative follows from the chain rule $\det D(h\circ f) = (\det Dh)(\det Df)$.
Finally, observe that if $h_j\cto h$   in $\Omega$, then $h_j\circ f\cto h\circ f$   in $\Omega_\circ$; this purely topological fact only requires $f$ to be  a homeomorphism.
\end{proof}

Let us recall a change of variable formula for Sobolev mappings, found in~\cite[Corollary 3.3.6]{AIMb}, \cite[Theorem 6.3.2]{IMb} and \cite{Ha}.

\begin{lemma}\label{cvdeform}
Let $\Omega$ and $\Omega^\ast$ be bounded domains in $\C$.
Suppose that $h\colon \Omega\to  \overline{\Omega^\ast}$ is continuous and belongs to $W^{1,1}_{\rm loc} (\Omega)$. Then for any measurable function $v \colon  \overline{\Omega^\ast} \to [0, \infty)$ we have
\begin{equation}\label{change}
\int_{\Omega} v\big(h(z)\big)\,  \abs{J_h(z)}\, dz \le \int_{\C} v(w) N_\Omega (h, w)\, dw.
\end{equation}
where $N_\Omega(h, w)$ is the cardinality of the preimage $h^{-1}(w)$. If, in addition, $h$ satisfies  Lusin's condition $(N)$  then the equality holds in~\eqref{change}.
\end{lemma}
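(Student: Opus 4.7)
The plan is to reduce~\eqref{change} to the classical Lipschitz area formula via a Federer--Mal\'y decomposition of $\Omega$. By monotone convergence and the layer-cake representation it suffices to treat $v=\chi_E$ for Borel $E\subset\C$, so the task becomes
\[
\int_{h^{-1}(E)} \abs{J_h(z)}\, dz \le \int_E N_\Omega(h,w)\,dw,
\]
with equality asserted when $h$ enjoys Lusin's condition $(N)$.

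Since $h$ is continuous and lies in $W^{1,1}_{\rm loc}(\Omega)$, a theorem of Calder\'on (strengthened by Mal\'y) guarantees that $h$ is approximately differentiable almost everywhere, the approximate derivative coinciding with the weak derivative. I would then invoke the associated Whitney-type decomposition: $\Omega=E_0\cup\bigcup_{k\ge 1}E_k$, where $|E_0|=0$, each $E_k$ is Borel, and on each $E_k$ the map $h$ agrees with a globally Lipschitz map $h_k\colon\C\to\C$ whose classical derivative coincides with the weak derivative of $h$ at every density point of $E_k$. In particular, $\abs{J_{h_k}}=\abs{J_h}$ a.e.\ on $E_k$. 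Applying the classical (Lipschitz) area formula to each $h_k$ and summing,
\[
\int_{\Omega} v(h)\abs{J_h} \;=\; \sum_{k\ge 1}\int_{E_k}v(h_k)\abs{J_{h_k}} \;=\; \int_\C v(w)\,\#\bigl(h^{-1}(w)\cap(\Omega\setminus E_0)\bigr)\,dw,
\]
and since the multiplicity on the right is dominated by $N_\Omega(h,w)$, the inequality follows. If, additionally, Lusin $(N)$ holds, then $|h(E_0)|=0$, so for a.e.\ $w\in\C$ the preimage $h^{-1}(w)\cap E_0$ is empty; the last inequality in the previous display becomes equality, giving~\eqref{change} with equality.

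The main technical obstacle is the precise coordination of the Lipschitz pieces: to add the classical area formulas without loss one needs the classical derivative of each $h_k$ at every density point of $E_k$ to coincide with the weak derivative of $h$ there. This is the content of the Federer--Mal\'y refinement of the Whitney extension procedure and is the only step where the Sobolev (as opposed to merely approximately differentiable) hypothesis is genuinely used. Once it is in place, the passage from indicator functions to nonnegative measurable $v$ is routine, and the failure of Lusin $(N)$ manifests itself exactly as the possibly strict drop in multiplicity produced by $E_0$.
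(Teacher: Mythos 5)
Your proposal is correct, and it is essentially the same argument the paper relies on: Lemma~\ref{cvdeform} is not proved in the paper but quoted from \cite[Corollary 3.3.6]{AIMb}, \cite[Theorem 6.3.2]{IMb} and \cite{Ha}, whose standard proof is exactly your reduction — approximate differentiability a.e.\ of continuous $W^{1,1}_{\rm loc}$ maps, the Federer--Mal\'y decomposition into countably many sets on which $h$ agrees with Lipschitz maps (with matching derivatives), the Lipschitz area formula on each piece, and the observation that condition $(N)$ kills the contribution of the exceptional null set, turning the inequality into an equality.
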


Lusin's condition $(N)$ means that $\abs{f(E)}=0$ whenever $\abs{E}=0$.

Hereafter $\deg_\Omega (h, w)$ stands for the degree of a mapping $h$ with respect to a point $w$~\cite{Llb}.
The degree is well-defined provided that $h\in C(\overline{\Omega})$ and $w\notin h(\partial\Omega)$.
However, we work with mappings that are not necessarily continuous up to the boundary.
In that case $\deg_\Omega (h, w)$ still makes sense as long as the values of $h$ near $\partial \Omega$
are bounded away from $w$. Specifically, $\deg_\Omega (h, w) := \deg_{\widetilde \Omega} (h, w)$
where $\widetilde{\Omega}\Subset \Omega$ is any compactly contained domain such that
$\inf\limits_{\Omega\setminus \widetilde{\Omega}}\abs{h-w}>0$.

\begin{lemma}\label{surjective}
For any $h\in \D(\Omega, \Omega^\ast)$ we have  $h(\Omega) \supset \Omega^\ast$.
\end{lemma}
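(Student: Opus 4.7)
My plan is to establish $h(\Omega)\supset \Omega^\ast$ by a topological degree argument, reducing the problem to the obvious fact that every approximating homeomorphism $h_j$ has degree $1$ at each point of $\Omega^\ast$. Fix an arbitrary $w_0\in\Omega^\ast$ and set $d=\dist(w_0,\partial\Omega^\ast)>0$. It suffices to produce a compactly contained subdomain $\widetilde{\Omega}\Subset\Omega$ on whose complement in $\Omega$ both $h$ and all $h_j$ (for $j$ large) stay bounded away from $w_0$: then $\deg_{\widetilde{\Omega}}(h,w_0)$ is well-defined in the sense explained in the paragraph preceding the lemma, and positivity of that degree will place $w_0$ inside $h(\widetilde{\Omega})$.

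To locate $\widetilde{\Omega}$ I would exploit the $\delta$-component of the $c\delta$-convergence. Each $\delta_{h_j}$ is continuous on $\overline{\Omega}$ and vanishes on $\partial\Omega$, because a homeomorphism between open sets pushes boundary-approaching sequences to the boundary. Uniform convergence $\delta_{h_j}\to\delta_h$ on $\overline{\Omega}$ then forces $\delta_h$ to be continuous with $\delta_h\equiv 0$ on $\partial\Omega$, so by uniform continuity there is $\rho>0$ with $\delta_h(z)<d/4$ whenever $\dist(z,\partial\Omega)\le\rho$. Set $\widetilde{\Omega}=\{z\in\Omega:\dist(z,\partial\Omega)>\rho\}$. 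Then $\abs{h(z)-w_0}\ge d-\delta_h(z)>3d/4$ on $\Omega\setminus\widetilde{\Omega}$, and a second application of uniform convergence produces an index $J$ with $\abs{h_j(z)-w_0}>d/2$ on $\Omega\setminus\widetilde{\Omega}$ for every $j\ge J$.

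The orientation-preserving homeomorphism $h_j\colon\Omega\onto\Omega^\ast$ has a unique preimage of $w_0$ in $\Omega$; the separation just proven places that preimage in $\widetilde{\Omega}$, so $\deg_{\widetilde{\Omega}}(h_j,w_0)=1$ for all $j\ge J$. Deformations are continuous in $\Omega$ by the oscillation property together with~\eqref{continuity}, so $h$ is continuous on the compact set $\overline{\widetilde{\Omega}}$, and the $c$-uniform convergence $h_j\to h$ there provides, for $j$ sufficiently large, a straight-line homotopy between $h_j$ and $h$ avoiding $w_0$ on $\partial\widetilde{\Omega}$. Homotopy invariance then yields $\deg_{\widetilde{\Omega}}(h,w_0)=1$, and nonvanishing of the degree forces $w_0\in h(\widetilde{\Omega})\subset h(\Omega)$. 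Since $w_0\in\Omega^\ast$ was arbitrary, the lemma follows. The only subtlety, that $h$ need not extend continuously to $\partial\Omega$, is precisely what requires the argument to take place on $\widetilde{\Omega}\Subset\Omega$, and the $\delta$-part of the $c\delta$-convergence has been designed exactly to supply the requisite boundary separation.
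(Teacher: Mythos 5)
Your argument is correct and is essentially the paper's own: both proofs use the $\delta$-part of the $c\delta$-convergence to isolate a compactly contained set outside of which $h$ and the approximating homeomorphisms $h_j$ stay uniformly away from the chosen point $w_0$, and then transfer the degree $1$ of $h_j$ to $h$ by a nearness/homotopy argument. The only differences are cosmetic — you cut off by a distance collar where the paper uses the superlevel set $\{\delta_h>\delta/4\}$ and a component thereof, and the paper additionally records the slightly stronger conclusion $\deg_\Omega(h,w)=1$, which is not needed for surjectivity alone.
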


\begin{proof}
We will prove the stronger statement
\begin{equation}\label{deg}
\deg_\Omega (h, w)=1 \qquad  \mbox{for all } w\in \Omega^\ast.
\end{equation}
Pick a point $w\in \Omega^\ast$  and let $\delta= \dist(w, \partial \Omega^\ast)$. Consider the open set \[U= \Big\{z \in \Omega \colon \delta_h(z) > \frac{\delta}{4} \Big\} \Subset \Omega .\]
Let $\{ h_j\}$ be an approximating sequence for $h$.
For sufficiently large $j$ we have $\abs{\delta_{h_j} -\delta_h }< \frac{\delta}{4}$ in $\Omega$ and $\abs{h-h_j} \le \frac{\delta}{4}$ in $\overline{U}$. For all $z\in \Omega \setminus U$, $\delta_{h_j}(z) \le \delta/2$, hence
\begin{equation}\label{ideaaboutz}
\abs{h_j(z)-w} \ge \frac{\delta}{2} \qquad \mbox{for } z\in \Omega \setminus U.
\end{equation}
Since $h_j$ is a homeomorphism it attains the value $w$ at some point $z_\circ$ in $U$. Let $U_\circ$ be the component of $z_\circ$ in $U$. Clearly, $\deg_{U_\circ}(h_j, w)=1$. On the boundary $\partial U_\circ$ we have $\abs{h_j-h} \le \delta /4$, which together with~\eqref{ideaaboutz} imply
\[\deg_{U_\circ}(h, w) = \deg_{U_\circ}(h_j, w)=1.\]
It remains to observe that $h(z)\ne w$ for $z\in\Omega\setminus U_\circ$. Indeed, by~\eqref{ideaaboutz}
the preimage of the open disk $D(w,\delta /2 )$  under the homomorphism $h_j$ is a connected subset of $U$, hence
a subset  of $U_\circ$. It follows that
\[
\abs{h(z)-w}\ge \abs{h_j(z)-w}-\frac{\delta}{4}\ge \frac{\delta}{4},\qquad z\in \Omega\setminus U_\circ
\]
as desired.
\end{proof}

\begin{definition}
A continuous mapping $f\colon \mathbb X \to \mathbb Y$ between metric spaces $\mathbb X$ and $\mathbb Y$  is {\it monotone} if for each $y\in f(\X)$ the set $f^{-1}(y)$ is compact and connected.
\end{definition}

\begin{proposition}\label{why}~\cite[VIII.2.2]{Wh}
If $\X$ is compact and $f\colon \mathbb X \onto \mathbb Y$ is monotone then $f^{-1}(C)$ is connected for every connected set $C \subset \mathbb Y$.
\end{proposition}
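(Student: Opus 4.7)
The plan is to argue by contradiction: I would assume $f^{-1}(C)$ is disconnected and derive a separation of $C$ itself. Specifically, suppose there is a decomposition $f^{-1}(C)=A\sqcup B$ with $A,B$ nonempty and clopen in the subspace topology on $f^{-1}(C)$. The only input from monotonicity enters here: for each $y\in C$ the fiber $f^{-1}(y)$ is connected and contained in $A\cup B$, hence lies wholly in $A$ or wholly in $B$. This yields a partition $C=C_A\sqcup C_B$, where $C_A:=\{y\in C:f^{-1}(y)\subset A\}$ and $C_B$ is defined symmetrically; both pieces are nonempty because $A,B$ are and $f(A)\cap C=C_A$, $f(B)\cap C=C_B$.

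The objective then becomes showing that each of $C_A$ and $C_B$ is closed in $C$, which will contradict the connectedness of $C$. By symmetry I would only treat $C_A$. I would pick a sequence $y_n\in C_A$ with $y_n\to y\in C$, and select preimages $x_n\in f^{-1}(y_n)\subset A$ (permitted because $y_n\in C_A$). By the compactness of $\X$, a subsequence $x_{n_k}$ converges to some $x\in\X$; continuity of $f$ forces $f(x)=y$, so $x\in f^{-1}(y)\subset A\cup B$. What remains is to exclude $x\in B$.

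For this last exclusion I would use that $B$ is open in $f^{-1}(C)$: write $B=V\cap f^{-1}(C)$ with $V$ open in $\X$. If $x$ were to belong to $B$, then $x\in V$, and the openness of $V$ together with $x_{n_k}\to x$ would place the tail of the subsequence inside $V$; since $x_{n_k}\in A\subset f^{-1}(C)$, those points would then lie in $V\cap f^{-1}(C)=B$, contradicting $A\cap B=\emptyset$. Hence $x\in A$, so $y=f(x)\in C_A$, and the closedness argument is complete.

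The only real obstacle is the topological bookkeeping. Because $f^{-1}(C)$ need not be closed in $\X$, the pieces $A$ and $B$ are a priori only closed in the subspace topology, which rules out a one-line appeal to $f$ being a closed map on them. Compactness of $\X$ is invoked precisely to extract a convergent subsequence of preimages and so to promote the abstract separation of $f^{-1}(C)$ into a convergence-based contradiction with the openness half of the clopen property of $B$.
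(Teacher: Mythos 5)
Your argument is correct. The paper itself gives no proof of this proposition; it simply cites Whyburn [VIII.2.2], so your write-up is a self-contained substitute rather than a variant of an argument in the text. The logical skeleton is sound: monotonicity is used exactly once, to see that each fiber $f^{-1}(y)$, $y\in C$, being connected and covered by the disjoint relatively open sets $A$ and $B$, lies wholly in one of them; surjectivity guarantees the fibers are nonempty, so $C=C_A\sqcup C_B$ is a genuine partition with both parts nonempty; and the delicate point--that $A,B$ are only clopen in the subspace $f^{-1}(C)$, which need not be closed in $\X$--is handled correctly, since the limit $x$ of the chosen preimages satisfies $f(x)=y\in C$, hence lies in $f^{-1}(C)=A\cup B$, and the relative openness of $B$ (written as $V\cap f^{-1}(C)$) excludes $x\in B$. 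Two small remarks: sequential arguments suffice because the paper's definition of monotone is stated for metric spaces, so sequential closedness of $C_A$ in $C$ is closedness; and at the final step one should say explicitly that $x\in A\cap f^{-1}(y)$ forces the whole connected fiber $f^{-1}(y)$ into $A$, whence $y\in C_A$ (you use this tacitly). The classical proof behind Whyburn's citation runs instead through the fact that $f$ is a closed map (compact domain, Hausdorff target), of which your compactness-based subsequence extraction is the metric-space counterpart; your route is more elementary and equally adequate for the way the proposition is used in the paper (Lemma 3.9 and the Appendix). Note also that compactness of the fibers, required by the paper's definition of monotone, is automatic here and plays no role in your proof, which is consistent.
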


See~\cite{Mc, Rab, Wh} for the background on monotone mappings. Deformations are closely related to monotone mappings of $\mathbb S^2$ onto itself. Given two  $k$-connected bounded domains $\Omega$ and $\Omega^\ast$  in $\C$, we choose and fix  homeomorphisms
\begin{equation}\label{kai1}
\chi \colon \Omega \onto \mathbb S^2 \setminus P \quad \mbox{and} \quad \chi^\ast \colon \Omega^\ast \onto \mathbb S^2 \setminus P
\end{equation}
where $P \subset \mathbb S^2$ consists of $k$ points referred to as punctures. A homeomorphism $h\colon \Omega \onto \Omega^\ast$ induces unique  homeomorphism $\bow{h} \colon \mathbb S^2 \onto \s^2$ such that
\begin{equation}\label{kai2}
\bow{h} \circ \chi=\chi^\ast \circ h.
\end{equation}

Note that $\bow{h}$ takes punctures into punctures in a one-to-one correspondence, though it may permute the elements of $P$. We claim that if a sequence of homeomorphisms $h_j \colon \Omega \onto \Omega^\ast$ converges $c\delta$-uniformly, then the mappings $\bow{h}_j$ converge uniformly on $S^2$. Indeed,
fix a small $\epsilon>0$ such that the $\epsilon$-neighborhood of the punctures, denoted $N_i(\epsilon)$, $i=1,\dots,k$, are disjoint. The uniform convergence of $\{\delta_{h_j}\}$ allows us to choose $\sigma>0$ such that each neighborhood $N_i(\sigma)$ is mapped by $\bow{h}_j$ into the union $\bigcup_i N_i(\epsilon)$ when $j$ is large.
Being connected, the set $\bow{h}_j(N_i(\sigma))$ must be contained in $N_{\pi(i)}(\epsilon)$ where $\pi$ is a permutation of the set $\{1,\dots,k\}$, possibly dependent on $j$. But in fact, $\pi$ does not depend on $j$ when $j$ is large enough, due to uniform convergence of $\bow{h}_j$ on the boundaries of $N_i(\sigma)$.

Thus we conclude that the sequence $\{\bow{h}_j\}$ converges uniformly to a surjective mapping, denoted by $\bow{h}$, which
leaves the set $P$ invariant but may permute its elements.
In a summary, for any deformation $h\in \D(\Omega, \Omega^\ast)$ there exists a unique mapping  $\bow{h} \colon \mathbb S^2 \onto \s^2$ satisfying~\eqref{kai2}. Being a uniform limit of self-homeomorphisms of $\s^2$, this mapping is monotone~\cite[IX.3.11]{Wh}.
The monotonicity of $\bow{h}$ has direct implications for $h$, which we state as a lemma for future references.

\begin{lemma}\label{mono}
Let $\Omega$ and $\Omega^\ast$ be bounded $k$-connected domains in $\C$, $k=1,2,\dots$ and  $h\in \D(\Omega, \Omega^\ast)$.  Then $\bow{h}$ is monotone. Consequently, for any  connected set $C\subset \Omega^\ast$ the preimage $h^{-1}(C)$ is also connected. Moreover, for every continuum $C \subset \overline{\Omega^\ast}$ the set $h^{-1}(C) \cup \Gamma_1 \cup \dots \cup \Gamma_\ell$ is a continuum, where $\Gamma_1, \dots, \Gamma_\ell$ are those selected components of $\partial \Omega$ which intersect the closure of $h^{-1}(C)$.
\end{lemma}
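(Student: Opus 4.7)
The plan is to establish the three assertions in increasing order of difficulty, each by transporting the question to $\mathbb S^2$ and applying Proposition~\ref{why}. For monotonicity of $\bow{h}$: the discussion preceding the lemma shows $\bow{h}$ is a uniform limit on $\mathbb S^2$ of the self-homeomorphisms $\bow{h}_j$, so Whyburn's theorem~\cite[IX.3.11]{Wh} on uniform limits of self-homeomorphisms of $\mathbb S^2$ applies directly.

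For a connected $C\subset\Omega^\ast$: the image $\chi^\ast(C)$ is a connected subset of $\mathbb S^2\setminus P$, so Proposition~\ref{why} applied to the monotone map $\bow{h}$ gives that $\bow{h}^{-1}(\chi^\ast(C))$ is connected. Since $\bow{h}$ sends $P$ into $P$, this preimage is disjoint from $P$ and hence lies in $\chi(\Omega)$; pulling back by the homeomorphism $\chi^{-1}$ yields $h^{-1}(C)=\chi^{-1}(\bow{h}^{-1}(\chi^\ast(C)))$ connected.

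For the continuum statement, let $K:=h^{-1}(C)\cup\Gamma_1\cup\cdots\cup\Gamma_\ell$. Compactness is elementary: $h^{-1}(C)$ is closed in $\Omega$, and every limit point of $h^{-1}(C)$ in $\mathbb C\setminus\Omega$ lies on $\partial\Omega$ and thus on some selected $\Gamma_i$ by definition; adjoining the compact $\Gamma_i$'s produces a closed bounded, hence compact, set. For connectedness I would approximate $C$ from outside by a decreasing sequence of open connected neighborhoods $U_n\subset\Omega^\ast$ with $\bigcap_n \overline{U_n}=C$. By the previous step each $h^{-1}(U_n)$ is connected, so the set $K_n:=\overline{h^{-1}(U_n)}\cup\bigl(\text{components of }\partial\Omega\text{ meeting }\overline{h^{-1}(U_n)}\bigr)$ is a compact connected superset of $K$ (the union of a connected set with connected sets meeting its closure is connected). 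The $K_n$ are nested and decreasing, so $\bigcap_n K_n$ is a continuum; it remains to identify $\bigcap_n K_n=K$.

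The main obstacle will be this last identification. On the interior side, $\bigcap_n \overline{h^{-1}(U_n)}\cap\Omega=h^{-1}(C)$ by continuity of $h$ together with $\bigcap_n \overline{U_n}=C$, via a diagonal extraction. On the boundary side, one must exclude components of $\partial\Omega$ surviving every $K_n$ but not meeting $\overline{h^{-1}(C)}$. This hinges on the end correspondence inherent in $\bow{h}$: each component $B_j\subset\partial\Omega$ corresponds via $\chi$ to a unique puncture $q_j\in P$, and the $c\delta$-uniform convergence $h_j\cto h$ links $B_j$ to the boundary component of $\partial\Omega^\ast$ corresponding to the puncture $\bow{h}(q_j)$. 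A component $B_j$ persists in every $K_n$ only when this dual boundary component of $\partial\Omega^\ast$ meets $C$, and tracing the resulting sequence through the end correspondence shows that this is precisely the condition for $B_j$ to be one of the selected $\Gamma_i$.
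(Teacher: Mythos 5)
Your treatment of the first two assertions is correct and is exactly the paper's route: $\bow{h}$ is monotone because it is a uniform limit of self-homeomorphisms of $\s^2$ (Whyburn), and Proposition~\ref{why} applied to $\bow{h}$, transported back by $\chi$, gives connectedness of $h^{-1}(C)$ for connected $C\subset\Omega^\ast$. (Your identity $h^{-1}(C)=\chi^{-1}\bigl(\bow{h}^{-1}(\chi^\ast(C))\bigr)$ tacitly uses that $\bow{h}(\chi(z))\in P$ whenever $h(z)\in\partial\Omega^\ast$, which does follow from the $c\delta$-convergence; this is a minor omission, at the same level of detail as the paper itself.)

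The ``Moreover'' part has a genuine gap. Your scheme needs $K\subset K_n$, i.e.\ $h^{-1}(C)\subset\overline{h^{-1}(U_n)}$ up to boundary components, together with the identification $\bigcap_n\overline{h^{-1}(U_n)}\cap\Omega=h^{-1}(C)$; only the inclusion ``$\subset$'' of the latter is true. A deformation may map an open part of $\Omega$ onto $\partial\Omega^\ast$ (the paper stresses this just before Lemma~\ref{goodset}), and then a point $z$ with $h(z)\in C\cap\partial\Omega^\ast$ need not be a limit of points of $h^{-1}(U_n)$, no matter how the $U_n$ are chosen. Concretely, take the deformation of Example~\ref{circ}: $\Omega=A(r,R)$, $\Omega^\ast=A(1,R_*)$, $h(z)=z/\abs{z}$ for $r<\abs{z}\le 1$ and the Nitsche map for $1\le\abs{z}<R$, and let $C=\T_1$ be the inner boundary circle of $\Omega^\ast$. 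Then $h^{-1}(C)=\{r<\abs{z}\le 1\}$ and $K=\{r\le\abs{z}\le 1\}$, while every admissible $U_n$ satisfies $h^{-1}(U_n)\subset h^{-1}(\Omega^\ast)=\{1<\abs{z}<R\}$; hence each $K_n$ and therefore $\bigcap_n K_n$ misses the whole annulus $\{r<\abs{z}<1\}$, the $K_n$ are not supersets of $K$, and your argument would output the continuum $\T_1$ rather than $K$. (A secondary issue: when $C$ meets $\partial\Omega^\ast$ there are no neighborhoods $U_n\subset\Omega^\ast$ of $C$ at all, and even the existence of connected open $U_n\subset\Omega^\ast$ with $\bigcap_n\overline{U_n}=C$ needs justification for rough boundaries.) The clause should instead be extracted, as the paper intends, from the monotonicity of $\bow{h}$ on the compact sphere itself --- for instance by collapsing each boundary component of $\Omega^\ast$ and of $\Omega$ to the corresponding puncture and applying Proposition~\ref{why} to $\bow{h}$ there --- rather than by exhausting $C$ from inside $\Omega^\ast$, which cannot see the portion of $h^{-1}(C)$ that $h$ sends onto $C\cap\partial\Omega^\ast$ through collapsed regions.
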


Concerning Lemma~\ref{mono} we remark that $h\colon \Omega\to\overline{\Omega^*}$ is not necessarily monotone; however, the restriction of $h$ to $h^{-1}(\Omega^*)$ is.

Next we turn to analytic properties of deformations.

\begin{lemma}\label{multilemma}
Let $\Omega$ and $\Omega^\ast$ be bounded domains in $\C$. If $h\in \D(\Omega, \Omega^\ast)$, then $h$ satisfies  Lusin's condition $(N)$ and  $N_\Omega(h, w) =1$ for almost every $w\in \Omega^\ast$.  Also $J_h=0$ almost everywhere in $\Omega \setminus h^{-1}(\Omega^\ast)$.
\end{lemma}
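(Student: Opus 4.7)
The plan is to deduce the three assertions from a single application of the equality in Lemma~\ref{cvdeform}, with Lusin's condition $(N)$ acting as the critical gateway.

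\emph{Step 1: Lusin's condition $(N)$.} Being a $c\delta$-uniform limit of homeomorphisms, $h$ inherits the oscillation property~\eqref{diamdef}, and by hypothesis $h\in W^{1,2}(\Omega)$. This places $h$ in the class of continuous weakly monotone Sobolev mappings at the critical exponent $p=n=2$, for which Lusin's condition $(N)$ is classical. The same monotonicity-plus-Sobolev mechanism that produces the continuity estimate~\eqref{continuity} likewise controls the area of $h(S)$ by the capacity of $S$, ruling out nontrivial null-set images.

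\emph{Step 2: Multiplicity one a.e.\ on $\Omega^\ast$.} With $(N)$ available, Lemma~\ref{cvdeform} gives the equality
\begin{equation*}
\int_\Omega J_h(z)\, dz = \int_\C N_\Omega(h, w)\, dw.
\end{equation*}
The proof of Lemma~\ref{surjective} shows $\deg_\Omega(h, w) = 1$ for every $w\in\Omega^\ast$, so in particular $N_\Omega(h, w) \ge 1$ on $\Omega^\ast$. Combining this with the deformation bound $\int_\Omega J_h \le |\Omega^\ast|$ I chain
\begin{equation*}
|\Omega^\ast| \;\ge\; \int_\Omega J_h \;=\; \int_\C N_\Omega(h, w)\, dw \;\ge\; \int_{\Omega^\ast} N_\Omega(h, w)\, dw \;\ge\; |\Omega^\ast|,
\end{equation*}
forcing equality at every step. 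This yields $N_\Omega(h, w) = 1$ for almost every $w\in\Omega^\ast$ and $N_\Omega(h, w) = 0$ for almost every $w\in\C\setminus\Omega^\ast$.

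\emph{Step 3: Vanishing of $J_h$ off $h^{-1}(\Omega^\ast)$.} Apply Lemma~\ref{cvdeform} once more, choosing for $v$ the indicator function of $\C\setminus\Omega^\ast$. Since $v(h(z))=1$ precisely when $z\in\Omega\setminus h^{-1}(\Omega^\ast)$,
\begin{equation*}
\int_{\Omega\setminus h^{-1}(\Omega^\ast)} J_h(z)\, dz \;=\; \int_{\C\setminus\Omega^\ast} N_\Omega(h, w)\, dw \;=\; 0
\end{equation*}
by Step~2. Non-negativity of $J_h$ now gives $J_h = 0$ a.e.\ on $\Omega\setminus h^{-1}(\Omega^\ast)$.

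The main obstacle is Step~1: at the critical exponent $p = n = 2$ Lusin's condition $(N)$ is \emph{not} automatic for continuous $W^{1,2}$ maps, and the argument genuinely requires the monotonicity supplied by $c\delta$-uniform approximation by homeomorphisms. Once $(N)$ is in hand, the remaining two claims amount to arithmetic on the change-of-variable formula.
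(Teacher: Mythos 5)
Your proof is correct and follows essentially the same route as the paper: Lusin's condition $(N)$ from the Mal\'y--Martio result for continuous $W^{1,2}$ mappings with the oscillation property, then the chain $\abs{\Omega^\ast}\ge\int_\Omega J_h=\int_\C N_\Omega(h,w)\,dw\ge\int_{\Omega^\ast}N_\Omega(h,w)\,dw\ge\abs{\Omega^\ast}$ using Lemmas~\ref{cvdeform} and~\ref{surjective}, with equality forcing all three conclusions. Your Step~3 merely makes explicit (via the indicator of $\partial\Omega^\ast$) what the paper compresses into ``equality holds throughout,'' so there is no substantive difference.
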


\begin{proof}
By Theorem A in~\cite{MaMa}  Lusin's condition $(N)$ is true for all  continuous $W^{1,2}$-mappings that satisfy the oscillation inequality~\eqref{diamdef}. Since the latter holds for any deformation (Lemma~\ref{noname}), the condition $(N)$ is satisfied.

By the definition of a deformation,
\[ \int_{\Omega}  {J_h(z)}\, dz \le \abs{\Omega^\ast}\]
Invoking Lemmas~\ref{cvdeform} and~\ref{surjective} we arrive at
\begin{equation}\label{eq16687}
\abs{\Omega^\ast} \ge \int_{\Omega}  {J_h(z)}\, dz = \int_{\C} N_\Omega (h,  w)\, dw \ge
  \int_{\Omega^\ast} N_\Omega (h, w)\, dw \ge \abs{\Omega^\ast}.
 \end{equation}
Therefore, equality holds throughout in~\eqref{eq16687}. This yields $N_\Omega (h,w)=1$ a.e. in $\Omega^\ast$ and $J_h=0$ a.e. in  $\Omega \setminus h^{-1}(\Omega^\ast)$, as claimed.
\end{proof}

\begin{corollary}\label{useless}
Let $\Omega$ and $\Omega^\ast$ be bounded domains in $\C$. If $h\in \D(\Omega, \Omega^\ast)$ and  $v \colon  \overline{\Omega^\ast} \to [0, \infty)$ is measurable, then
\begin{equation} \label{useful}
\int_\Omega v\big(h(z)\big) J_h(z)\, d z = \int_{\Omega^\ast} v(w)\, d w.
\end{equation}
\end{corollary}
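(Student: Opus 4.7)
The plan is to read this off directly from Lemma~\ref{cvdeform} combined with the multiplicity information in Lemma~\ref{multilemma}. First I would extend $v$ to all of $\C$ by setting it equal to zero outside $\overline{\Omega^\ast}$. Since $h$ is a deformation, Lemma~\ref{multilemma} guarantees Lusin's condition $(N)$, so the equality case of Lemma~\ref{cvdeform} applies and yields
\[
\int_\Omega v(h(z))\, J_h(z)\, dz = \int_\C v(w)\, N_\Omega(h,w)\, dw,
\]
where we have dropped absolute values because $J_h \geq 0$ a.e.\ by the definition of a deformation.

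Next I would exploit the fact that the chain of inequalities in~\eqref{eq16687} is in fact a chain of equalities. This forces $\int_\C N_\Omega(h,w)\, dw = \int_{\Omega^\ast} N_\Omega(h,w)\, dw$, hence $N_\Omega(h,w)=0$ for a.e.\ $w \in \C \setminus \Omega^\ast$. Combining this with the identity $N_\Omega(h,w)=1$ for a.e.\ $w \in \Omega^\ast$, also from Lemma~\ref{multilemma}, the right-hand side of the displayed equation collapses to $\int_{\Omega^\ast} v(w)\, dw$, giving the claim.

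There is no serious obstacle here: the corollary is a direct packaging of the two preceding lemmas. The one subtlety worth flagging is that no hypothesis controls the planar measure of $\partial\Omega^\ast$, so one cannot naively identify $\int_{\overline{\Omega^\ast}}$ with $\int_{\Omega^\ast}$; it is precisely the vanishing of $N_\Omega(h,\cdot)$ a.e.\ outside $\Omega^\ast$ that kills any possible contribution from $\partial\Omega^\ast$. The argument is valid for an arbitrary nonnegative measurable $v$; should one wish to be completely explicit, one can first verify the identity for indicator functions of measurable subsets of $\Omega^\ast$ and then extend to general $v$ by monotone convergence.
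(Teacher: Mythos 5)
Your proof is correct and follows essentially the same route as the paper's: both are direct applications of the equality case of Lemma~\ref{cvdeform} (via condition~(N)) combined with the multiplicity and Jacobian information of Lemma~\ref{multilemma}. The only cosmetic difference is that the paper first restricts the left-hand integral to $G=h^{-1}(\Omega^\ast)$ using $J_h=0$ a.e.\ off $G$, whereas you keep the integral over all of $\Omega$ and instead kill the contribution of $\C\setminus\Omega^\ast$ on the right-hand side by observing $N_\Omega(h,\cdot)=0$ a.e.\ there --- the same fact used in dual form.
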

\begin{proof}
Let $G=h^{-1}(\Omega^\ast)$.
Combining Lemmas~\ref{cvdeform} and~\ref{multilemma} we have
\[
\begin{split}
\int_\Omega v\big(h(z)\big) J_h(z)\, d z & = \int_{G} v\big(h(z)\big) J_h(z)\, d z = \int_{\C} v(w) N_G (h,w)\, dw \\
&= \int_{\Omega^\ast} v(w)\, d w.
 \qedhere  \end{split} \]
\end{proof}

In general, a deformation may take  a part of $\Omega$ into $\partial \Omega^\ast$. This is the subject of our next lemma.

\begin{lemma}\label{goodset}
Suppose that  $h\in \D(\Omega,\Omega^*)$  where $\Omega$ and $\Omega^\ast$ are bounded doubly connected domains.
Let $G= \{z\in \Omega \colon h(z) \in \Omega^\ast\}$. Then   $G$ is a domain  separating the boundary components of $\Omega$. Precisely, the two components of $\partial \Omega$ lie in different components of $\C \setminus G$.
\end{lemma}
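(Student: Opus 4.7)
The plan is to establish, in order, that $G$ is a domain (open, nonempty, connected) and then that $\C\setminus G$ decomposes as the disjoint union of two continua, one containing each boundary component of $\Omega$. Throughout, write $\partial\Omega=\Gamma_0\cup\Gamma_1$ and $\partial\Omega^\ast=\Gamma_0^\ast\cup\Gamma_1^\ast$ for the boundary components.

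Openness of $G$ is immediate from continuity of $h$ and openness of $\Omega^\ast$. Nonemptiness follows from Lemma~\ref{surjective}, since $h(\Omega)\supset\Omega^\ast$. Connectedness comes from Lemma~\ref{mono}: applied to the connected set $\Omega^\ast$, it gives that $h^{-1}(\Omega^\ast)=G$ is connected.

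For the separation claim I would decompose
\[
\C\setminus G=(\C\setminus\Omega)\cup(\Omega\setminus G)=E_0\cup E_1\cup h^{-1}(\Gamma_0^\ast)\cup h^{-1}(\Gamma_1^\ast),
\]
where $E_0, E_1$ are the two closed, connected components of $\C\setminus\Omega$, chosen so that $\Gamma_i\subset E_i$. The discussion around~\eqref{kai2} provides a stable matching $\Gamma_i\leftrightarrow\Gamma_i^\ast$: the puncture permutation $\pi$ associated with an approximating sequence $\{h_j\}$ eventually stabilizes, and then uniform $c\delta$-convergence forces $h(z_n)\to\Gamma_i^\ast$ whenever $z_n\to\Gamma_i$. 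After relabeling so that $\pi=\id$, this yields
\[
\overline{h^{-1}(\Gamma_i^\ast)}\cap\Gamma_{1-i}=\emptyset,\qquad i=0,1.
\]
Moreover, when $h^{-1}(\Gamma_i^\ast)$ is nonempty, the fiber $\bow{h}^{-1}(p_i^\ast)$ is a continuum in $\s^2$ containing both the puncture $p_i$ and the nonempty set $\chi(h^{-1}(\Gamma_i^\ast))\subset\s^2\setminus P$; a continuum admits no isolated point, so $p_i$ must lie in the closure of $\chi(h^{-1}(\Gamma_i^\ast))$, which translates back to $\overline{h^{-1}(\Gamma_i^\ast)}\cap\Gamma_i\neq\emptyset$. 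Applying the refined form of Lemma~\ref{mono} with $C=\Gamma_i^\ast$ then delivers that
\[
K_i:=h^{-1}(\Gamma_i^\ast)\cup\Gamma_i
\]
is a continuum (reading $K_i=\Gamma_i$ in the trivial case $h^{-1}(\Gamma_i^\ast)=\emptyset$).

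Finally, set $A:=E_0\cup K_0$ and $B:=E_1\cup K_1$. Each is closed in $\C$ as a union of closed sets, and each is connected since $E_i$ and $K_i$ are continua sharing the nonempty set $\Gamma_i\subset E_i$. They are disjoint: $E_0\cap E_1=\emptyset$, $h^{-1}(\Gamma_0^\ast)\cap h^{-1}(\Gamma_1^\ast)=\emptyset$, and $E_i\cap h^{-1}(\Gamma_{1-i}^\ast)=\emptyset$ because the latter lies in $\Omega$ while $E_i\subset\C\setminus\Omega$. Their union exhausts $\C\setminus G$, so $A$ and $B$ are its two components, with $\Gamma_0\subset A$ and $\Gamma_1\subset B$. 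The main obstacle I anticipate is pinning down the boundary matching $\Gamma_i\leftrightarrow\Gamma_i^\ast$ for the possibly non-injective limit map $h$; this rests on combining the stability of the puncture permutation with the monotonicity of $\bow{h}$, precisely as outlined above.
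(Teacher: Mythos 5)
Your proof is correct, but your treatment of the separation step is genuinely different from the paper's. The paper handles it in a few lines with a Urysohn-type device: the normalized distance $\delta(z)=\dist(h(z),\partial_I\Omega^*)/\bigl[\dist(h(z),\partial_I\Omega^*)+\dist(h(z),\partial_O\Omega^*)\bigr]$ extends continuously to $\C$ with the constant values $0$ and $1$ on the two components of $\C\setminus\Omega$; since $h(z)\in\partial\Omega^*$ on $\Omega\setminus G$, the disjoint open sets $\{\delta<1/2\}$ and $\{\delta>1/2\}$ cover $\C\setminus G$ and each contains exactly one component of $\partial\Omega$, which gives the separation at once. You instead construct the two pieces of $\C\setminus G$ explicitly as $E_i\cup h^{-1}(\Gamma_i^*)\cup\Gamma_i$, using the refined clause of Lemma~\ref{mono} to make $K_i$ a continuum, the stabilized puncture permutation for the matching $\Gamma_i\leftrightarrow\Gamma_i^*$, and a nice no-isolated-point argument on the fiber $\bow{h}^{-1}(p_i^*)$ to ensure $\overline{h^{-1}(\Gamma_i^*)}$ actually reaches $\Gamma_i$ when nonempty (without which $K_i$ could be disconnected). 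Your route proves slightly more --- that $\C\setminus G$ has exactly two components, and identifies them --- at the price of heavier topological bookkeeping; the paper's argument is shorter and uses only the connectivity clause of Lemma~\ref{mono}, though its claim that the extended $\delta$ is continuous silently rests on precisely the boundary matching you spell out. Both arguments obtain openness of $G$ from continuity of $h$ and connectedness of $G$ from Lemma~\ref{mono}. Two cosmetic remarks: the unbounded component of $\C\setminus\Omega$ is connected but not literally a continuum (only connectedness is used, so no harm), and in the matching step it is worth one sentence on why $z_n\to\Gamma_i$ forces $\chi(z_n)\to p_i$ (the cluster sets of the two ends of $\Omega$ are the two distinct boundary components); neither point affects correctness.
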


\begin{proof} The set $G$ is open by the continuity of $h$, and connected by Lemma~\ref{mono}.
Let $\partial_I \Omega^*$ and $\partial_O \Omega^*$ be the inner and outer components of the boundary of $\Omega^*$.
The function
\[
\delta(z):=\frac{\dist(h(z),\partial_I \Omega^*)}{\dist(h(z),\partial_I \Omega^*)+\dist(h(z),\partial_O \Omega^*)},\qquad z\in\Omega,
\]
extends continuously  to $\mathbb C$ by setting the values $0$ and $1$ in the components of $\C \setminus \Omega$.
The disjoint open sets $\{z\in\C \colon \abs{\delta(z)} < 1/2\}$ and $\{z\in\C \colon \abs{\delta(z)} > 1/2\}$ cover $\C\setminus G$
in such a way that each of them contains one and only one boundary component of $\Omega$. Thus $G$ separates the components of $\partial \Omega$.
\end{proof}

In order to prove that $\D(\Omega,\Omega^*)$ is sequentially weakly closed, we need an  estimate
near the boundary stated as Proposition~\ref{trans} below. For Sobolev homeomorphisms
a similar result was proved in~\cite{IOt} in all dimensions. The extension beyond homeomorphisms is deferred to~\S\ref{appen}.

\begin{proposition}\label{trans}
Let $\Omega$ and $\Omega^\ast$ be bounded $k$-connected domains, $2\le k<\infty$. Denote their boundary components by
$\Gamma_i$ and $\Gamma^\ast_i$, $i=1,\dots,k$. Assume that $\diam \Gamma_i >0$ for all $1\le i\le k$. Then there exist functions $\eta_i$, $1\le i\le k$, continuous in $\C$ and vanishing on $\Gamma_i$, such that the following holds. If $h\colon \Omega\to\overline{\Omega^*}$ is a continuous $W^{1,2}$-mapping such that  $h(\Omega) \supset \Omega^\ast$,  $h$ is monotone on the set $h^{-1}(\Omega^\ast)$, and
\begin{equation}\label{bddtobdd}
\dist(h(z), \Gamma^\ast_i)\to 0 \quad \textrm{as}\quad  \dist(z, \Gamma_i)\to 0, \quad i=1,\dots, k,
\end{equation}
then
\begin{equation}\label{wehave}
\dist(h(z), \Gamma^\ast_i) \le \eta_i(z)\,  \sqrt{\E[h]}, \qquad i=1,\dots,k.
\end{equation}
 \end{proposition}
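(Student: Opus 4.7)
The plan is to bound $\dist(h(z),\Gamma_i^*)$ for $z$ near $\Gamma_i$ by a length--area argument on annuli centered at points of $\Gamma_i$, and then propagate the resulting oscillation bound into $\Omega$ using the monotonicity of $h$ on $h^{-1}(\Omega^*)$ together with the boundary behavior~\eqref{bddtobdd}. Fix $i\in\{1,\dots,k\}$ and choose $r_0\in(0,\tfrac12\diam\Gamma_i)$ so small that the $3r_0$-neighborhood of $\Gamma_i$ is disjoint from each $\Gamma_j$, $j\ne i$. For $z\in\Omega$ with $d:=\dist(z,\Gamma_i)<r_0$, pick $z_0\in\Gamma_i$ with $\abs{z-z_0}=d$. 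Since $\diam\Gamma_i>2r_0$, the continuum $\Gamma_i$ crosses every circle $C_\rho=\{w\colon \abs{w-z_0}=\rho\}$, $\rho\in(d,r_0)$, while $C_\rho$ avoids every other $\Gamma_j$; consequently every component of $C_\rho\cap\Omega$ is an arc whose two endpoints lie on $\Gamma_i$.

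Writing the area element in polar form $dA=ds\,d\rho$ and applying Cauchy--Schwarz on each circle gives
\begin{equation*}
\E[h]\;\ge\;\int_d^{r_0}\!\int_{C_\rho\cap\Omega}\abs{Dh}^2\,ds\,d\rho\;\ge\;\frac{1}{2\pi}\int_d^{r_0}\frac{d\rho}{\rho}\biggl(\int_{C_\rho\cap\Omega}\abs{Dh}\,ds\biggr)^{\!2},
\end{equation*}
so some $\rho_\ast\in(d,r_0)$ satisfies $\int_{C_{\rho_\ast}\cap\Omega}\abs{Dh}\,ds\le M$, where $M:=\sqrt{2\pi\E[h]/\log(r_0/d)}$. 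The oscillation of $h$ on every arc $\alpha$ of $C_{\rho_\ast}\cap\Omega$ is thus at most $M$; combined with~\eqref{bddtobdd} at the two endpoints of $\alpha$ on $\Gamma_i$, this yields $\dist(h(w),\Gamma_i^*)\le M$ for every $w\in\alpha$.

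Let $V$ be the component of $B(z_0,\rho_\ast)\cap\Omega$ containing $z$; the step above together with~\eqref{bddtobdd} on the $\Gamma_i$-portion of $\partial V$ shows that the boundary values of $\dist(h(\cdot),\Gamma_i^*)$ on $\partial V$ do not exceed $M$. The main content of the proof is to propagate this estimate to the interior: when $d$ is small enough that $M<\dist(\Gamma_i^*,\overline{\Omega^*}\setminus\Gamma_i^*)$, the set $W:=\{w\in V:\dist(h(w),\Gamma_i^*)>M\}$ is relatively compact in $V$ and contained in $h^{-1}(\Omega^*)$, where the hypothesized monotonicity of $h$ applies; the maximum-principle property of monotone continuous surjections (boundary of the image contained in image of the boundary) forces $W=\emptyset$, so $\dist(h(z),\Gamma_i^*)\le M$. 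Setting $\eta_i(z):=\psi(\dist(z,\Gamma_i))$ with $\psi$ continuous on $[0,\infty)$, $\psi(0)=0$, and $\psi(t)=\sqrt{2\pi/\log(r_0/t)}$ for small $t>0$, delivers~\eqref{wehave}.

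The principal obstacle is the last step, since monotonicity is assumed only on the open set $h^{-1}(\Omega^*)$ rather than globally on $\Omega$. One must exclude ``pockets'' on which $h$ drifts away from $\Gamma_i^*$ into $\Omega^*$ while the pocket's boundary remains trapped inside $\partial V$; this is exactly where the argument must go beyond the Sobolev-homeomorphism setting of~\cite{IOt}, and the full details are deferred to~\S\ref{appen}.
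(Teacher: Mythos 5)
Your length--area step near $\Gamma_i$ is sound: for a.e.\ $\rho\in(d,r_0)$ the map $h$ is absolutely continuous on $C_\rho\cap\Omega$, each component of $C_\rho\cap\Omega$ is an arc with endpoints on $\Gamma_i$, and Cauchy--Schwarz plus \eqref{bddtobdd} give $\dist(h,\Gamma_i^*)\le M$ on a good circle $C_{\rho_*}$, with $M=\sqrt{2\pi\E[h]/\log(r_0/d)}$. But the proof collapses exactly where you say the ``main content'' lies. First, the set $W=\{w\in V:\dist(h(w),\Gamma_i^*)>M\}$ need \emph{not} be contained in $h^{-1}(\Omega^*)$: a point $w\in V$ may be mapped into another boundary component $\Gamma_j^*$, $j\ne i$, and then $\dist(h(w),\Gamma_i^*)>M$ while $w\notin h^{-1}(\Omega^*)$, so the hypothesized monotonicity gives no information there. (Your smallness condition does not exclude this; moreover, as written it is vacuous, since $\dist(\Gamma_i^*,\overline{\Omega^*}\setminus\Gamma_i^*)=0$ --- you presumably mean the distance from $\Gamma_i^*$ to the \emph{other} boundary components.) Second, the ``maximum-principle property of monotone continuous surjections'' you invoke is not established: for maps that are merely continuous on $\Omega$ and monotone only on the open set $h^{-1}(\Omega^*)$, the inclusion $\partial h(V)\subset h(\partial V)$ is precisely the kind of statement that must be proved, not quoted. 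You acknowledge this and defer ``the full details'' to \S\ref{appen} --- but \S\ref{appen} \emph{is} the paper's proof of Proposition~\ref{trans} (Theorem~\ref{limittheorem}), so the deferral is circular and the essential step remains unproved.

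Two further points. The paper's argument is structured quite differently and is worth comparing: it defines a scalar potential $U$ which is $0$ on $\y_i$ and $1$ on the remaining complementary components, extends $V=U\circ h$ continuously to all of $\C$, and uses monotonicity through Proposition~\ref{why} to show that every level set $V^{-1}\{t\}$, $0<t<1$, is a continuum of diameter at least $d=\min_i\diam\x_i$; this is the tool that rules out your ``pockets'', and it yields the oscillation property, hence equicontinuity via \eqref{continuity}, and the conclusion follows from an Arzel\`a--Ascoli compactness/contradiction argument. If you want to keep your direct route, you need a substitute for that level-set/continuum argument; nothing in your sketch provides one. Finally, your construction of $\eta_i$ is incomplete away from $\Gamma_i$: for $\dist(z,\Gamma_i)\ge r_0$ the bound \eqref{wehave} with a finite continuous $\eta_i$ forces a uniform positive lower bound on $\E[h]$ over the whole admissible class (the paper's Lemma~\ref{lemapp1}, which uses $k\ge 2$ and the nondegeneracy of the $\Gamma_i$), and your proposal never establishes such a bound.
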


\begin{lemma}\label{noname}
Let $\Omega$ and $\Omega^\ast$ be bounded $k$-connected domains, $2\le k<\infty$.
Assume that the boundary components of $\Omega$ do not degenerate into points.
If a family of deformations $\mathcal F \subset \D(\Omega,\Omega^*)$ is bounded in $W^{1,2}(\Omega)$ then
it is equicontinuous on compact subsets of $\Omega$ and the family
$\Delta_{\mathcal{F}}:=\{\delta_h \colon h \in \mathcal F\}$ is equicontinuous on $\overline{\Omega}$.
\end{lemma}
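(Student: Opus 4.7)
The strategy has two parts: interior equicontinuity for $\mathcal F$ itself (which transfers to $\{\delta_h\}$ via the $1$-Lipschitz relation $|\delta_h(z_1)-\delta_h(z_2)|\le |h(z_1)-h(z_2)|$), and a uniform boundary estimate for $\{\delta_h\}$ coming from Proposition~\ref{trans}. For the first part, every deformation satisfies the oscillation property~\eqref{diamdef}, so the continuity estimate~\eqref{continuity} applies. Given a compact $K\subset\Omega$, set $r_0=\tfrac14\dist(K,\partial\Omega)$; for $z_1,z_2\in K$ with $|z_1-z_2|<r_0$, applying~\eqref{continuity} to $D=D(z_1,r_0)\subset 2D\subset\Omega$ yields
\[
\abs{h(z_1)-h(z_2)}^2 \le \frac{C\,M}{\log\bigl(e+2r_0/\abs{z_1-z_2}\bigr)},\qquad M:=\sup_{h\in\mathcal F}\E[h]<\infty,
\]
which is a uniform modulus on $K$.

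For the second part I invoke Proposition~\ref{trans}. Its hypotheses hold for any $h\in\D(\Omega,\Omega^*)$: $h(\Omega)\supset\Omega^*$ by Lemma~\ref{surjective}; the restriction of $h$ to $h^{-1}(\Omega^*)$ is monotone by the remark following Lemma~\ref{mono}; the non-degeneracy $\diam\Gamma_i>0$ is the hypothesis of the lemma; and~\eqref{bddtobdd} holds under a matching $\Gamma_i\leftrightarrow\Gamma^*_{\pi_h(i)}$ induced by the monotone $\mathbb S^2\to\mathbb S^2$ extension $\bow h$ constructed before Lemma~\ref{mono}, where $\pi_h$ is some permutation of $\{1,\dots,k\}$. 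For each fixed permutation $\pi$, Proposition~\ref{trans} provides continuous functions $\eta^{(\pi)}_i$ on $\C$ vanishing on $\Gamma_i$ such that $\dist(h(z),\Gamma^*_{\pi(i)})\le \eta^{(\pi)}_i(z)\sqrt{\E[h]}$ whenever $\pi_h=\pi$. There are at most $k!$ such permutations, so $\eta_i:=\max_\pi \eta^{(\pi)}_i$ is continuous on $\C$, vanishes on $\Gamma_i$, and satisfies
\[
\delta_h(z) \le \dist\bigl(h(z),\Gamma^*_{\pi_h(i)}\bigr) \le \eta_i(z)\sqrt{M}
\]
for $z$ in a neighborhood of $\Gamma_i$, uniformly over $h\in\mathcal F$.

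To conclude, given $\epsilon>0$ choose a neighborhood $V$ of $\partial\Omega$ so small that $\eta_i\sqrt M<\epsilon/2$ on $V$ for each $i$. The set $K:=\overline\Omega\setminus V$ is compact in $\Omega$, and on it the interior bound together with the $1$-Lipschitz property of $\delta_h$ in $h$ gives a uniform modulus for $\{\delta_h\}$; meanwhile, if $z_1,z_2\in V$ lie near the same component $\Gamma_i$ then the boundary bound yields $|\delta_h(z_1)-\delta_h(z_2)|\le \delta_h(z_1)+\delta_h(z_2)<\epsilon$. The main technical obstacle is verifying the hypotheses of Proposition~\ref{trans} for a general deformation — chiefly the boundary matching~\eqref{bddtobdd}, which rests on the monotone-lift construction $\bow h$ preceding Lemma~\ref{mono} — and then absorbing the $h$-dependence of the permutation $\pi_h$ into $h$-independent controlling functions $\eta_i$ by exploiting the finiteness of the symmetric group on $\{1,\dots,k\}$.
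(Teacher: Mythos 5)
Your proof is correct and follows essentially the same route as the paper: interior equicontinuity of $\mathcal F$ (hence of $\Delta_{\mathcal F}$, via the $1$-Lipschitz dependence of $\delta_h$ on $h$) from the oscillation property and the estimate \eqref{continuity}, and uniform smallness of $\delta_h$ near $\partial\Omega$ from Proposition~\ref{trans}. Your extra care in verifying the hypotheses of Proposition~\ref{trans} for deformations (Lemma~\ref{surjective}, the monotonicity remark after Lemma~\ref{mono}, and the boundary-component matching via $\bow{h}$, absorbed over the finitely many permutations) only makes explicit what the paper leaves implicit; just phrase the choice of the boundary neighborhood componentwise, i.e.\ require $\eta_i\sqrt{M}<\epsilon/2$ only near $\Gamma_i$ rather than on all of $V$.
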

\begin{proof}
The equicontinuity of $\mathcal F$ on compact subsets of $\Omega$ is readily seen from~\eqref{continuity}.
It follows that $\Delta_{\mathcal F}$ is equicontinuous on compact subsets as well.
To show that it is actually equicontinuous on $\overline{\Omega}$ it remains
to prove that for any $\epsilon>0$ there is a compact set $K\subset \Omega$ such that
$\delta_h<\epsilon$ on $\Omega\setminus K$ for all $h\in \mathcal F$. This is exactly what Proposition~\ref{trans}
delivers.
\end{proof}

\begin{lemma}\label{wclosed}
Let $\Omega$ and $\Omega^\ast$ be bounded $k$-connected planar domains, $2\le k < \infty$.
Assume that the boundary components of $\Omega$ do not degenerate into points.
If a sequence $\{h_j\}\subset \D(\Omega,\Omega^*)$ converges weakly in $W^{1,2}(\Omega)$, then
its limit belongs to $\D(\Omega,\Omega^*)$
\end{lemma}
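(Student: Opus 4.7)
The strategy is to verify, in order, the four properties of Definition~\ref{defdef} for the weak limit $h$. The first bullet, $h\in W^{1,2}(\Omega)$, is immediate from weak convergence. Since weakly convergent sequences are bounded in $W^{1,2}(\Omega)$, Lemma~\ref{noname} applies (its non-degeneracy hypothesis on $\partial\Omega$ is inherited from ours) and yields equicontinuity of $\{h_j\}$ on compacta of $\Omega$ and of $\{\delta_{h_j}\}$ on $\overline{\Omega}$. Combining Arzel\`a--Ascoli with Rellich compactness (which identifies any subsequential uniform limit as $h$), I would conclude that $h_j\to h$ uniformly on compact subsets of $\Omega$; equicontinuity of $\{\delta_{h_j}\}$ on $\overline{\Omega}$, together with $\delta_{h_j}|_{\partial\Omega}\equiv 0$, then forces $\delta_{h_j}\to \delta_h$ uniformly on $\overline{\Omega}$. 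Thus $h_j\cto h$ already along the given sequence, and in particular $h\colon \Omega\to \overline{\Omega^\ast}$.

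Next I would handle the Jacobian. Writing $h_j=(u_j,v_j)$, the null-Lagrangian identity
\[
J_{h_j}\;=\;\partial_x\bigl(u_j\,\partial_y v_j\bigr)\,-\,\partial_y\bigl(u_j\,\partial_x v_j\bigr)
\]
combined with Rellich's theorem (so $u_j\to u$ in $L^p_{\rm loc}$ for every $p<\infty$) and weak $L^2$ convergence of the partials of $v_j$ gives $J_{h_j}\to J_h$ in $\mathcal D'(\Omega)$. Testing against nonnegative $\varphi\in C_c^\infty(\Omega)$ yields $J_h\ge 0$ a.e., which is the second bullet. For the third, pick $\varphi_n\in C_c^\infty(\Omega)$ with $0\le \varphi_n\nearrow \mathbf 1_\Omega$; monotone convergence on the left and the distributional convergence on the right deliver
\[
\int_\Omega J_h \;=\; \lim_{n\to \infty}\int_\Omega J_h\,\varphi_n \;=\; \lim_{n\to \infty}\lim_{j\to \infty}\int_\Omega J_{h_j}\,\varphi_n \;\le\; \abs{\Omega^\ast}.
\]

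Finally, I would construct an approximating sequence of sense-preserving homeomorphisms for $h$ by a diagonal argument. Each $h_j$ carries its own approximating sequence $\{h_j^k\}_{k\in\N}$ of sense-preserving homeomorphisms with $h_j^k\cto h_j$ as $k\to\infty$. Fix an exhaustion $K_1\Subset K_2\Subset \cdots$ of $\Omega$ and, for each $j$, choose $k_j$ so large that
\[
\sup_{K_j}\,\abs{h_j^{k_j}-h_j}\;<\;\tfrac{1}{j}\qquad\text{and}\qquad \sup_{\overline{\Omega}}\,\abs{\delta_{h_j^{k_j}}-\delta_{h_j}}\;<\;\tfrac{1}{j}.
\]
Then $\psi_j:=h_j^{k_j}$ is a sense-preserving homeomorphism $\Omega\onto \Omega^\ast$, and the triangle inequality (together with $h_j\cto h$) gives $\psi_j\cto h$, verifying the fourth bullet.

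The delicate point in the whole argument is the Jacobian step: one must justify that the distributional limit of the merely $L^1$-bounded Jacobians $J_{h_j}$ is the honest $L^1$ function $J_h$, rather than $J_h$ plus some residual singular part. It is exactly the null-Lagrangian formula together with Rellich compactness that rule this out; the remaining assembly is routine.
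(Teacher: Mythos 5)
Your argument is correct and follows essentially the same route as the paper: equicontinuity from Lemma~\ref{noname}, Arzel\`a--Ascoli to obtain $h_j\cto h$ along the given sequence, a diagonal selection of approximating homeomorphisms, and weak convergence of Jacobians for the sign condition and the bound $\int_\Omega J_h\le\abs{\Omega^\ast}$. The only difference is that the paper simply cites \cite[Theorem 8.4.2]{IMb} for the weak continuity of Jacobians under $W^{1,2}$-weak limits, whereas you re-derive it via the null-Lagrangian identity and Rellich compactness.
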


\begin{proof}
Let $h$ be the weak limit of $h_j\in \D(\Omega,\Omega^*)$. Its Jacobian determinant $J_h$ is nonnegative a.e. in $\Omega$ due to $L^1$-weak convergence of Jacobians
under $W^{1,2}$-weak limits~\cite[Theorem 8.4.2]{IMb}. The weak convergence also implies that $\int_{\Omega} J_h \le \abs{\Omega^\ast}$. It remains to show that $h$ has an approximating sequence of homeomorphisms. For this it is enough to prove that $h_j \cto h$ in $\Omega$. Indeed, each $h_j$ being a $c\delta$-uniform limit of homeomorphisms, the diagonal selection will produce the desired approximating sequence.

By Lemma~\ref{noname} the sequence $\{h_j\}$ is equicontinuous on any compact subset of $\Omega$.
With the help of the Arzel\`a-Ascoli theorem it is routine to prove that $h_j \to h$ $c$-uniformly.
In particular, $\delta_{h_j}\to \delta_h$ pointwise. The convergence is uniform because
the functions $\delta_{h_j}$ are equicontinuous in $\overline{\Omega}$ by virtue of Lemma~\ref{noname}.
It follows that $h_j\cto h$ as claimed.
\end{proof}

Due to the weak lower semicontinuity of the Dirichlet energy, Lemma~\ref{wclosed} has a useful corollary.

\begin{corollary}\label{attain}
Under the hypotheses of Lemma~\ref{wclosed} there exists $h\in \D (\Omega, \Omega^\ast)$ such that $\E[h]= \EE (\Omega, \Omega^\ast)$.
\end{corollary}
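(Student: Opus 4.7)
The plan is to run the direct method of the calculus of variations; essentially all of the technical content has already been absorbed into Lemma~\ref{wclosed}, so the remaining argument is short.

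First I would select a minimizing sequence $\{h_j\}\subset \D(\Omega,\Omega^*)$ with $\E[h_j]\to \EE(\Omega,\Omega^*)$. The class $\D(\Omega,\Omega^*)$ is nonempty under the stated hypotheses by Lemma~\ref{laterr}, and if $\EE(\Omega,\Omega^*)=\infty$ the statement is vacuous, so I may assume $\EE(\Omega,\Omega^*)$ is finite and $\E[h_j]$ bounded. Since every $h_j$ takes values in the bounded target $\overline{\Omega^\ast}$, one has $\norm{h_j}_{L^2(\Omega)}\le \diam(\Omega^\ast)\,\abs{\Omega}^{1/2}$, while $\norm{Dh_j}_{L^2(\Omega)}^2=\E[h_j]$ is bounded by construction. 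Hence $\{h_j\}$ is bounded in $W^{1,2}(\Omega)$.

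Next, by the reflexivity of $W^{1,2}(\Omega)$ I would extract (and relabel) a subsequence converging weakly to some $h\in W^{1,2}(\Omega)$. This is the point at which Lemma~\ref{wclosed} does the heavy lifting: its hypotheses are met (the boundary components of $\Omega$ do not degenerate), so $h\in \D(\Omega,\Omega^*)$. In particular, $h$ carries an approximating sequence of homeomorphisms, $J_h\ge 0$ almost everywhere, and $\int_\Omega J_h\le \abs{\Omega^*}$, without the need to revisit any of those facts here.

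Finally, the Dirichlet energy $f\mapsto \int_\Omega \abs{Df}^2$ is a convex and continuous functional of the gradient, hence weakly lower semicontinuous on $W^{1,2}(\Omega)$. Combining this with $h\in \D(\Omega,\Omega^*)$ yields
\[
\EE(\Omega,\Omega^*)\le \E[h]\le \liminf_{j\to\infty}\E[h_j] = \EE(\Omega,\Omega^*),
\]
so $\E[h]=\EE(\Omega,\Omega^*)$ and $h$ is the desired energy-minimal deformation. The only genuine obstacle in attaining the minimum over $\D(\Omega,\Omega^*)$ is the weak sequential closedness of this class, and that has already been settled in Lemma~\ref{wclosed}; the present corollary is then a one-line consequence of reflexivity plus convex lower semicontinuity.
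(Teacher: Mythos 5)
Your argument is correct and is exactly the paper's (unwritten) proof: a bounded minimizing sequence, weak compactness in $W^{1,2}(\Omega)$, the sequential weak closedness of $\D(\Omega,\Omega^\ast)$ from Lemma~\ref{wclosed}, and weak lower semicontinuity of the Dirichlet energy. The only quibble is your appeal to Lemma~\ref{laterr} for nonemptiness, which is stated only for doubly connected domains while Lemma~\ref{wclosed} allows $k$-connected ones; but since deformations lie in $W^{1,2}(\Omega)$ by definition, nonemptiness of $\D(\Omega,\Omega^\ast)$ is equivalent to finiteness of $\EE(\Omega,\Omega^\ast)$, which is the only case of interest, so this does not affect the proof.
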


Note that Lemma~\ref{wclosed} fails for $k=1$. Indeed, the M\"obius transformations
\[f_a(z)= \frac{z-a}{1-a\bar z}\]
converge weakly in $W^{1,2}$ to a constant mapping (not a deformation) as $a\to 1$.
We conclude this section with a promised remark on the existence  of homeomorphisms of class $\Ho^{1,2}(\Omega,\Omega^*)$.

\begin{lemma}\label{laterr}
Let $\Omega$ and $\Omega^*$ be bounded doubly connected domains in $\C$. Then $\Ho^{1,2}(\Omega,\Omega^*)$ is nonempty, except for one degenerate case when  $\Mod\Omega=\infty$ and $\Mod\Omega^*<\infty$. In this case there is no homeomorphism $h\colon \Omega \onto \Omega^\ast$ of Sobolev class $W^{1,2}(\Omega)$.
\end{lemma}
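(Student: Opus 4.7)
The plan is to split into four cases according to whether $\Mod\Omega$ and $\Mod\Omega^\ast$ are finite, producing a $W^{1,2}$-homeomorphism in three of them and proving nonexistence in the fourth.

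For existence I would proceed as follows. When both moduli are finite, uniformize $\Omega$ and $\Omega^\ast$ conformally to circular annuli $A=\{r<|z|<R\}$ and $A^\ast=\{r^\ast<|z|<R^\ast\}$ and take the radial power stretch $g(z)=r^\ast(|z|/r)^\alpha z/|z|$ with $\alpha=\log(R^\ast/r^\ast)/\log(R/r)$. This is a $K$-quasiconformal diffeomorphism, so after composing with the conformal identifications the resulting homeomorphism $h\colon \Omega\onto\Omega^\ast$ satisfies $|Dh|^2\le 2KJ_h$, whence $\E[h]\le 2K|\Omega^\ast|<\infty$ by change of variables. When $\Mod\Omega<\infty$ and $\Mod\Omega^\ast=\infty$, write $\Omega^\ast=U\setminus\{p\}$ with $U$ simply connected and let $\phi\colon\DD\to U$ be the Riemann map normalized by $\phi(0)=p$. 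After identifying $\Omega$ with $A=\{r<|z|<R\}$, define $g\colon A\onto\DD\setminus\{0\}$ by $g(z)=\psi(|z|)\,z/|z|$ for a smooth strictly increasing $\psi\colon[r,R]\to[0,1]$ with $\psi(r)=0$, $\psi(R)=1$, and set $h=\phi\circ g$. In polar coordinates $|Dg|^2=(\psi')^2+(\psi/\rho)^2$ is bounded on $A$, and a direct change of variables reduces the energy bound to
\begin{equation*}
\E[h]\le C_0\int_A|\phi'(g(z))|^2\,dz\le C_1\!\left(\int_\DD|\phi'(w)|^2\,dw+\int_\DD\frac{|\phi'(w)|^2}{|w|}\,dw\right),
\end{equation*}
both terms finite because $|\phi'|^2\in L^1(\DD)$ has mass $|U|$ and because $\phi'$ is analytic, hence bounded, near $0$. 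Finally, when both moduli are infinite, write $\Omega=U\setminus\{p\}$ and $\Omega^\ast=U^\ast\setminus\{p^\ast\}$, compose Riemann maps to obtain a conformal isomorphism $\phi\colon U\to U^\ast$ with $\phi(p)=p^\ast$, and note that its restriction to $\Omega$ is a homeomorphism onto $\Omega^\ast$ with $\E[\phi]=2|U^\ast|<\infty$.

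For nonexistence in the remaining case $\Mod\Omega=\infty$, $\Mod\Omega^\ast<\infty$, suppose toward a contradiction that $h\in\Ho^{1,2}(\Omega,\Omega^\ast)$ exists, and write $\Omega=U\setminus\{p\}$. Since $\Mod\Omega^\ast<\infty$, both components of $\partial\Omega^\ast$ are nondegenerate continua, and the one onto which $h$ converges as $z\to p$, call it $\Gamma^\ast$, satisfies $\diam\Gamma^\ast=d>0$. For small $\epsilon>0$ the image $h(\{|z-p|=\epsilon\})$ is a Jordan curve in $\Omega^\ast$ enclosing $\Gamma^\ast$, so its length $L(\epsilon)=\int_0^{2\pi}|\partial_\theta h(p+\epsilon e^{i\theta})|\,d\theta$ is at least $\diam\Gamma^\ast=d$. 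Cauchy--Schwarz yields $d^2\le L(\epsilon)^2\le 2\pi\int_0^{2\pi}|\partial_\theta h(p+\epsilon e^{i\theta})|^2\,d\theta$; dividing by $\epsilon$, integrating in $\epsilon\in(0,\eta)$ and invoking Fubini (legitimate via the ACL property of $W^{1,2}$) gives
\begin{equation*}
d^2\int_0^\eta\frac{d\epsilon}{\epsilon}\le 2\pi\int_{D(p,\eta)\setminus\{p\}}|Dh|^2\,dz\le 2\pi\,\E[h],
\end{equation*}
whose left-hand side diverges, contradicting $\E[h]<\infty$.

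The principal obstacle is the energy bound in the second existence case: the collapsing radial stretch forces the composition $\phi\circ g$ to fold the inner boundary circle into a single puncture, and one must check that the boundedness of $\phi'$ near $0$ tames the weight $|w|^{-1}$ produced by the change of variables. The nonexistence argument is conceptually clean but needs careful use of Fubini and the ACL characterization of $W^{1,2}$ in order to legitimize the sliced length--area estimate for a Sobolev homeomorphism that is not assumed to extend smoothly across the puncture.
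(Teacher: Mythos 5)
Your proposal is correct in substance but organized differently from the paper's proof, so a comparison is worthwhile. For the nonexistence direction the paper argues in one line from a removability theorem: isolated points are removable for monotone $W^{1,2}$ mappings (the result of Iwaniec--Onninen cited in the text), so a $W^{1,2}$-homeomorphism of $V\setminus\{z_0\}$ onto $\Omega^*$ extends continuously to $V$, forcing $\Omega^*$ to be a punctured domain and hence $\Mod\Omega^*=\infty$. Your length--area estimate on circles about the puncture reaches the same contradiction by elementary means (absolute continuity on almost every circle, Cauchy--Schwarz, logarithmic divergence of $\int_0^\eta d\epsilon/\epsilon$); it is longer but self-contained where the paper leans on an external theorem. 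One small repair: the image $h(\{|z-p|=\epsilon\})$, being an essential Jordan curve in $\Omega^*$, encloses the \emph{bounded} component of $\C\setminus\Omega^*$, which need not coincide with the component $\Gamma^*$ onto which $h$ clusters as $z\to p$, since a homeomorphism may interchange the two ends; because $\Mod\Omega^*<\infty$ makes that bounded component a nondegenerate continuum, your lower bound on $L(\epsilon)$ survives with $d$ equal to its diameter, so the slip is harmless. For the existence direction the paper treats only two cases---a conformal map when both moduli are infinite, and the radial power stretch $\psi_\alpha(z)=\abs{z}^{\alpha-1}z$ between circular annuli when both are finite---and does not spell out the mixed case $\Mod\Omega<\infty=\Mod\Omega^*$; your collapsing radial map composed with a Riemann map, together with the weighted change-of-variables estimate showing that boundedness of $\phi'$ near the origin absorbs the $\abs{w}^{-1}$ weight, fills exactly that case, so on this point your argument is more complete than the printed proof, while agreeing with it essentially verbatim in the two cases the paper does handle.
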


\begin{proof} Suppose that the degenerate case takes place. Then $\Omega = V\setminus\{z_0\}$ where $V$ is a simply connected domain. Since isolated points are removable for monotone $W^{1,2}$ functions~\cite[Theorem 3.1]{IOt}, the mapping $h$ has a continuous extension to $V$. But then $\Omega^*=h(V)\setminus \{h(z_0)\}$,
which contradicts the finiteness of $\Mod\Omega^*$.

Conversely, suppose that the degenerate case fails. If $\Mod\Omega=\Mod\Omega^*=\infty$, then there exists a conformal
mapping $h\colon \Omega\onto\Omega^*$ for which $\E[h]=2\abs{\Omega^*}<\infty$ by virtue of~\eqref{ener2}.
The remaining case is when both domains have finite modulus. Then we map them conformally onto circular annuli
$\A$ and $\A^*$ and compose them with a radial quasiconformal mapping $\psi \colon \A \onto \A^\ast$,
\[
\psi(z)=\abs{z}^{\alpha-1}z, \qquad \alpha=\frac{\Mod \Omega^*}{\Mod\Omega}.
\]
This creates an element in $\Ho^{1,2}(\Omega,\Omega^*)$.
\end{proof}

\section{Harmonic replacement}

Let $\Omega$ be a domain in $\C$ and $U \Subset \Omega$ a bounded simply connected domain.
For any continuous function $f\colon \Omega\to\C$ there exists a unique continuous function $\p_{U}f \colon \Omega\to\C$,
called the Poisson modification of $f$, such that $\p_{U}f$ is harmonic in $U$ and agrees with $f$ on $\Omega\setminus U$.
Indeed, the Dirichlet problem with continuous boundary data has a continuous solution in any simply connected domain, e.g.,~\cite[Theorem 4.2.1]{Ranb} or
~\cite[Ch.III]{GMb}. Furthermore, $\p_Uf\in W^{1,2}(\Omega)$ whenever $f\in W^{1,2}(\Omega)$. Although the latter fact is surely  known,
we give an explanation. The function $\p_Uf$ can be constructed by the Wiener method~\cite[Theorem III.5.1]{GMb} as a $c$-uniform limit
\begin{equation}\label{wiener}
\p_Uf = \lim_{n\to\infty} \p_{U_n}f,\qquad U_1\Subset U_2\Subset \dots
\end{equation}
where $\{U_n\}$ is an exhaustion of $U$ by smooth Jordan domains.
Since the difference $\p_{U_n}f-f$ vanishes on the smooth boundary $\partial U_n$, it extends by zero to a function in $W^{1,2}(\Omega)$.
Adding $f$ to it, we conclude that $\p_{U_n}f\in W^{1,2}(\Omega)$, with a uniform bound on the $W^{1,2}$-norm thanks to Dirichlet's principle.
Thus, $\{\p_{U_n}f\}$ contains a subsequence that converges weakly in $W^{1,2}(\Omega)$. Its limit must be $\p_Uf$
since $\p_{U_n}f\to \p_Uf$ uniformly.

The following lemma generalizes the well-known Rad\'o-Kneser-Choquet Theorem on the univalence of harmonic extensions. The added generality is in that the domain $U$ is not required to be Jordan.

\begin{lemma}[Modification Lemma]\label{RKC}
Let $U$ and $D$ be bounded simply connected domains in $\C$ with $D$ convex. Suppose that  $f$ is a homeomorphism from $U$ onto $D$ with  continuous extension $f\colon \overline{U} \to \overline{D}$. Then there exists a unique harmonic homeomorphism $h\colon U \onto D $ which agrees with $f$ on the boundary. Specifically,  $h$ has a continuous extension to $\overline{U}$ which coincides  with $f$ on $\partial U$.
\end{lemma}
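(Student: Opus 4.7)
The plan is to construct $h$ by solving the Dirichlet problem on $U$, and then check the homeomorphism property through an exhaustion of $D$ by convex subdomains, which reduces matters to the classical Rad\'o-Kneser-Choquet (RKC) theorem on Jordan subdomains of $U$. Since $U$ is bounded and simply connected, every boundary point is regular for the Dirichlet problem (cf.~\cite[Theorem~4.2.1]{Ranb}), and solving coordinate-wise with the continuous data $f|_{\partial U}$ produces a unique $h\in C(\overline{U})$, harmonic in $U$, with $h=f$ on $\partial U$; uniqueness among harmonic maps with these boundary values is immediate from the maximum principle. Convexity of $D$ gives $h(U)\subset\overline{D}$ by applying the maximum principle to $\re(\alpha h)$ for each supporting half-plane $\{\re(\alpha w)\le\beta\}$ of $D$. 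To rule out $h(U)\cap\partial D$, first verify that $f|_{\partial U}$ surjects onto $\partial D$ (any accumulation point $\zeta\in\overline{U}$ of $f^{-1}(w_n)$, with $w_n\in D\to w\in\partial D$, must lie in $\partial U$ since $f(U)=D$), and then a strong-maximum-principle argument along a supporting line at a hypothetical interior value $h(z_0)\in\partial D$ would force $f(\partial U)$ into a single line, contradicting surjectivity onto the Jordan curve $\partial D$.

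The heart of the argument is to prove that $h$ is a local diffeomorphism. Exhaust $D$ by smooth convex subdomains $D_n\Subset D$ with $\bigcup D_n=D$ (e.g.\ $D_n=\{w\in D\colon\dist(w,\partial D)>1/n\}$) and set $U_n:=f^{-1}(D_n)\Subset U$; since $f$ is a homeomorphism and $\partial D_n$ is a Jordan curve, each $U_n$ is a Jordan subdomain with $\bigcup U_n=U$, and $f|_{\partial U_n}\colon\partial U_n\to\partial D_n$ is a homeomorphism. Classical RKC on $U_n$ produces a harmonic homeomorphism $h_n\colon U_n\to D_n$ with $h_n=f$ on $\partial U_n$. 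The maximum-principle comparison $\sup_{U_n}|h-h_n|\le\sup_{\partial U_n}|h-f|$ tends to $0$ (because $\partial U_n$ collapses onto $\partial U$ while $h-f$ vanishes there), so $h_n\to h$ locally uniformly, hence in $C^\infty_{\textnormal{loc}}(U)$. Writing $h_n=F_n+\overline{G_n}$ with $F_n,G_n$ holomorphic on the simply connected $U_n$, RKC gives $F_n'\ne 0$ and $|\omega_n|<1$ for $\omega_n:=G_n'/F_n'$. In the limit, Hurwitz forces either $F'\ne 0$ throughout $U$, or $F'\equiv 0$; the latter would make $h$ antiholomorphic with $J_h\le 0$, contradicting $J_h\ge 0$ (inherited from $J_{h_n}>0$) unless $h$ is constant, which the boundary data excludes. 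Thus $\omega=G'/F'$ is holomorphic in $U$ with $|\omega|\le 1$; by the maximum modulus principle, either $|\omega|<1$ throughout or $|\omega|\equiv 1$, and the latter confines $h(U)$ to a real-affine line, contradicting $h(U)\supset D$ (which follows from the degree identity $\deg(h,U,w)=\deg(f,U,w)=1$ for $w\in D$, valid because $h=f$ on $\partial U$). So $J_h>0$ throughout $U$.

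Finally, $h\colon U\to D$ is a local diffeomorphism, and it is proper because $h=f$ sends $\partial U$ into $\partial D$, so preimages of compact sets in $D$ cannot cluster at $\partial U$. A proper local diffeomorphism into the simply connected $D$ is a single-sheeted covering, hence a homeomorphism, and its continuous extension to $\overline{U}$ agrees with $f$ on $\partial U$ by construction. The main obstacle is the $J_h>0$ step: without a Jordan hypothesis on $U$ the classical RKC theorem cannot be applied directly to $U$, and the key device is to exhaust the convex target $D$ by convex subdomains $D_n$, which automatically produces Jordan subdomains $U_n=f^{-1}(D_n)$ amenable to RKC, followed by a limit argument that preserves the Schwarz-type dilatation bound $|\omega_n|<1$.
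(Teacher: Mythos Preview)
Your proof is correct and follows essentially the same route as the paper: exhaust the convex target $D$ by convex subdomains $D_n$, pull back via $f$ to Jordan subdomains $U_n\subset U$, apply the classical Rad\'o--Kneser--Choquet theorem on each $U_n$, and pass to the limit to obtain $J_h>0$ via the dichotomy $\abs{h_{\bar z}}<\abs{h_z}$ versus $\abs{h_{\bar z}}\equiv\abs{h_z}$. The only notable difference is in the final upgrade from local to global injectivity: the paper observes that a local diffeomorphism which is a $c$-uniform limit of homeomorphisms is itself injective, whereas you argue via properness and the covering-space characterization over a simply connected target; both are short and standard.
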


\begin{proof}
The existence and uniqueness of a continuous harmonic extension $h$ of $f{\big|_{\partial U}}$ are well known. Also $h(U) \supset D$ by a straightforward degree argument and $h(U)\subset \overline{D}$  by the maximum principle. Thus the essence of the lemma is injectivity of $h$.

Let $\{D_n\}$ be an exhaustion of $D$ by convex domains and define $U_n = f^{-1}(D_n)$, which is a Jordan domain. By the Rad\'o-Kneser-Choquet Theorem~\cite[p. 29]{Dub} the mapping $h_n:=\p_{U_n}f$ is harmonic homeomorphism of $U_n$ onto $D_n$. As $n\to \infty$, $h_n \to \p_U f=:h$ $c$-uniformly on $U$, see~\cite[Ch.III]{GMb}.
The convergence of harmonic functions implies the convergence of  their derivatives. Therefore $J_{h_n}\to J_h$ pointwise, in particular  $J_{h} \ge 0$.
This means that the holomorphic functions $h_z$ and $\overline{h_{\bar z}}$ satisfy the inequality $\abs{\overline{h_{\bar z}}} \le \abs{h_z} $. This is only possible when either $\abs{\overline{h_{\bar z}}} < \abs{h_z} $ in $U$ or $\abs{\overline{h_{\bar z}}} \equiv \abs{h_z} $ in $U$. The second case cannot occur, for it would
yield $J_h\equiv 0$, contradicting $h(D)\supset U$.    Therefore $J_h>0$, so the mapping $h$ is a local diffeomorphism. Being also a $c$-uniform limit of homeomorphisms, $h$ is a diffeomorphism of $U$.
\end{proof}

We are now in the position to apply the Poisson modification to deformations.

\begin{lemma}\label{harmrep}
Let $\Omega$ and $\Omega^*$ be bounded $k$-connected domains, $1\le k< \infty$. Suppose that $h\in \D(\Omega,\Omega^*)$
 satisfies $h(\Omega)=\Omega^*$.
Let $D$ be a convex domain such that $\overline{D}\subset \Omega^*$. Denote $U=h^{-1}(D)$ and $g=\p_U h$.
Then
\begin{enumerate}[(i)]
\item $g \in \D(\Omega,\Omega^*)$
\item The restriction of $g$ to $U$ is a harmonic diffeomorphism onto $D$.
\item $\E[g]\le \E[h]$ with equality if and only if $g\equiv h$.
\end{enumerate}
\end{lemma}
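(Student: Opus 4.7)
My first step is to analyze the set $U=h^{-1}(D)$. Lemma~\ref{mono} tells me that $\bow{h}\colon\s^2\onto\s^2$ is monotone, so Proposition~\ref{why} applies to both $\chi^\ast(D)$ and its complement in $\s^2$; the two preimages are connected, making $\chi(U)=\bow{h}^{-1}(\chi^\ast(D))$ a simply connected subset of $\s^2\setminus P$, whence $U$ is a simply connected subdomain of $\Omega$. To verify $U\Subset\Omega$, I observe that $\delta_h$ extends continuously to $\overline{\Omega}$ with $\delta_h\equiv 0$ on $\partial\Omega$ (as the uniform limit of the continuous functions $\delta_{h_j}$), while $\delta_h\ge\dist(\overline D,\partial\Omega^\ast)>0$ on $U$; thus $\overline U$ cannot meet $\partial\Omega$.

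With $U$ in place, set $g=\p_U h$. The Wiener construction gives $g\in W^{1,2}(\Omega)$, with $g=h$ on $\Omega\setminus U$ and $g$ harmonic on $U$. Dirichlet's principle delivers (iii) immediately: the harmonic extension minimizes the Dirichlet energy in $U$ among $W^{1,2}$ competitors with trace $h|_{\partial U}$, so $\E[g]\le\E[h]$; equality forces $h|_U$ to be harmonic and hence $g\equiv h$ by uniqueness of the Dirichlet problem. For the remaining clauses of Definition~\ref{defdef}, one has $J_g=J_h\ge 0$ off $U$, while part (ii) below will give $J_g>0$ on $U$; and applying Corollary~\ref{useless} with $v=\chi_D$ yields $\int_U J_h=|D|$, so that $\int_\Omega J_g=\int_\Omega J_h\le|\Omega^\ast|$.

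For the approximating sequence required by Definition~\ref{defdef}, I take an approximating sequence $\{h_j\}$ for $h$ and set $U_j=h_j^{-1}(D)$. Each $U_j$ is simply connected (as the preimage of a simply connected set under a homeomorphism) and the $c\delta$-uniform convergence forces $U_j\Subset\Omega$ for all large $j$. The Modification Lemma (Lemma~\ref{RKC}) applied to $h_j\colon\overline{U_j}\onto\overline D$ produces a harmonic diffeomorphism $U_j\onto D$; glued with $h_j$ outside $U_j$, this yields a homeomorphism $g_j\colon\Omega\onto\Omega^\ast$. I then claim $g_j\cto g$: off any fixed neighborhood of $\overline U$ one has eventually $g_j=h_j\to h=g$ uniformly; on any compact $K\subset U$ one has eventually $K\subset U_j$ (since $h(K)\Subset D$), while the $g_j|_K$ are uniformly bounded in $\overline D$ and harmonic, so normal families extract a subsequential limit $\tilde g$ which is harmonic on $U$, agrees with $g=h$ off $U$, and is identified with $g$ by uniqueness of the Dirichlet problem with trace $h|_{\partial U}$. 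The uniform convergence $\delta_{g_j}\to\delta_g$ on $\overline\Omega$ is inherited from $\delta_{h_j}\to\delta_h$ (they agree off $U_j$), together with the fact that both $g_j$ and $g$ take values in $\overline D$ on $U_j$ and $U$ respectively, controlling the thin shells $U\triangle U_j$.

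Finally for (ii), $g$ is harmonic on $U$, so $g_z$ and $\overline{g_{\bar z}}$ are holomorphic there. Each $g_j$ being a sense-preserving diffeomorphism on $U_j$ satisfies $|\overline{g_{j,\bar z}}|<|g_{j,z}|$, and passing to the limit through $c$-uniform convergence of harmonic functions (which transfers to derivatives) yields $|\overline{g_{\bar z}}|\le|g_{z}|$ on $U$. Hurwitz's theorem applied to the meromorphic ratio forces either strict inequality throughout $U$ or $J_g\equiv 0$; the latter would make $g(U)$ of empty interior, contradicting $\deg_U(g,w)=\deg_U(h,w)=1$ for $w\in D$ (the two degrees agree since $g$ and $h$ coincide on $\partial U$, and the value $1$ comes from Lemma~\ref{surjective} combined with excision). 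Hence $J_g>0$ on $U$ and $g|_U$ is a local diffeomorphism; as a proper degree-one map onto the convex domain $D$ it is then a diffeomorphism. The chief technical obstacle is the convergence step in the third paragraph: the moving domains $U_j$ need not approximate $U$ in any regular manner near $\partial U$, and reconciling harmonic convergence on compact subsets of $U$ with $c\delta$-uniform control on $\overline\Omega$ is the most delicate part of the argument.
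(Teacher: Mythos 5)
Your reduction of (iii) to Dirichlet's principle and your analysis of $U$ are fine, but the heart of the lemma is producing an approximating sequence of homeomorphisms for $g=\p_U h$ (conclusion (i)), and your construction of it has a genuine gap, not merely a delicate step. You build $g_j$ by harmonically replacing $h_j$ on the moving domains $U_j=h_j^{-1}(D)$ and claim that off any fixed neighborhood of $\overline U$ one eventually has $g_j=h_j$, and that on compact subsets of $U$ the limits are identified through the Dirichlet problem with trace $h|_{\partial U}$. This fails because $U_j$ does not localize near $\overline U$: since $h$ is only monotone, the set $h^{-1}(\partial D)$ may have nonempty interior, and $U_j$ is only constrained to stay near $h^{-1}(\overline D)$, which can be far larger than $\overline U$. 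Concretely, take $\Omega=\Omega^*=\{\abs{z}<10\}$, $D=\{\abs{w}<1\}$, and the deformation $h$ equal to the identity on $\{\abs{z}\le 1\}$, equal to $z/\abs{z}$ on $1\le\abs{z}\le2$, and a radial homeomorphism of $\{2\le\abs{z}<10\}$ onto $\{1\le\abs{w}<10\}$; then $U=\{\abs{z}<1\}$ and $g=\p_U h=h$. Choosing the (perfectly legitimate) approximating homeomorphisms $h_j$ which squeeze the annulus $A(1,2)$ onto a thin annulus lying just \emph{inside} the unit circle, one gets $U_j\approx\{\abs{z}<2\}$ with boundary values $\approx z/\abs{z}$, so the harmonic replacement gives $g_j(z)\approx z/2$ on $\{\abs{z}<2\}$. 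Thus $g_j$ converges to $z/2$ there, not to $g$, and in particular your normal-families identification on compact subsets of $U$ breaks down: the subsequential limit need not have trace $h|_{\partial U}$. Since your verification of the Jacobian sign and of conclusion (ii) also feeds off the convergence $g_j\to g$ (to pass $\abs{g_{j,\bar z}}<\abs{g_{j,z}}$ to the limit), the gap propagates to (ii) as well.

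The paper's proof avoids the moving domains $U_j$ altogether. It first uses the monotonicity of $\bow{h}$ together with Rad\'o's Theorem II.1.47 to produce a monotone self-map of $\s^2$ that agrees with $\bow{h}$ outside $\chi(U)$ and is a homeomorphism of $\chi(U)$ onto $\chi^\ast(D)$; this legitimizes applying the Modification Lemma~\ref{RKC} to the boundary data $h|_{\partial U}$ (note that $h$ itself need not be injective on $U$, which is also why your plan of deducing (ii) from limits of the $g_j$ is the fragile point), so $g=\p_U h$ is a harmonic diffeomorphism of $U$ onto $D$. Then, since $\bow{g}$ is monotone on $\s^2$, Rad\'o's approximation theorem II.1.57 gives homeomorphisms of $\s^2$ converging uniformly to $\bow{g}$, adjusted to fix the punctures; pulling these back through~\eqref{kai2} yields homeomorphisms $h_j\colon\Omega\onto\Omega^*$ with $h_j\cto g$ directly, with no reference to an approximating sequence of $h$. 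If you want to keep your approach, you would have to replace the sets $h_j^{-1}(D)$ by domains constructed from $g$ itself (or otherwise approximate the monotone map $g$ by homeomorphisms), which is essentially what the cited Rad\'o theorems accomplish.
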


\begin{proof} We use the notation of~\eqref{kai1} and~\eqref{kai2}. By Lemma~\ref{mono} the induced  mapping $\bow{h}\colon \s^2 \onto \s^2$ is monotone, so we can apply Theorem II.1.47 in~\cite{Rab}. According to which there exists a  monotone
mapping $f \colon \s^2 \onto \s^2$ which takes $\chi(U)$ homeomorphically onto $\chi^\ast(D)$ and agrees with $\bow{h}$ on $\s^2 \setminus \chi(U)$.  This allows us to apply the Modification Lemma~\ref{RKC} to $h$. Thus the Poisson modification $g=\p_Uh$ performs a harmonic diffeomorphism
of $U$ onto $D$. Clearly, $\bow{g}\colon \s^2 \onto \s^2$ is  monotone. Any such mapping can be uniformly approximated by homeomorphisms~\cite[Theorem II.1.57]{Rab}. We can actually alter slightly these homeomorphisms so as to obtain a sequence of homeomorphisms $g_j \colon \s^2 \onto \s^2$ that agree  with $\bow{g}$ at the punctures $P \subset \s^2$, and still  $g_j \to \bow{g}$ uniformly on $\s^2$.  Every such homeomorphism $g_j \colon \s^2 \onto \s^2$ is represented by a homeomorphism $h_j \colon \Omega \onto \Omega^\ast$ by the rule $g_j=\bow{h}_j $, where $\bow{h}_j$ is determined from the equation~\eqref{kai2}; $\bow{h}_j \circ \chi = \chi^\ast \circ h_j$. The uniform convergence of $g_j$ implies that $h_j \cto g$  in $\Omega$.   Thus we conclude that $g \in \D(\Omega,\Omega^*)$.
The inequality $\E[\p_U h]\le \E[h]$ is merely a restatement of Dirichlet's principle.
\end{proof}

\section{Reich-Walczak-type inequalities}\label{reisec}

The  Reich-Walczak inequalities~\cite{RW} provide the upper and lower bounds for the conformal modulus of the image of a circular annulus under a quasiconformal homeomorphism.
Propositions~\ref{rwrho} and~\ref{rwthe} provide such bounds for deformations, which are in general neither quasiconformal nor homeomorphisms. We also treat Sobolev homeomorphisms in $W^{1,1}_{\rm loc}$, for which
similar inequalities were established in~\cite{MM} in the context of self-homeomorphisms of a disk
that agree with the identity mapping on the boundary. However, we work with doubly connected domains and
do not prescribe boundary values.

Let us introduce notation for several quantities associated with the derivatives of a mapping $f$.
We make use of polar coordinates $\rho$ and $\theta$
and the associated {\it normal} and {\it tangential} derivatives
\[f_N=f_\rho \quad \mbox{ and } \quad f_T= \frac{f_\theta}{\rho}.\]
In these terms  the Wirtinger derivatives $f_z$ and $f_{\bar z}$ are expressed as
\[
f_z = \frac{e^{-i\theta}}{2}\left(f_N - i f_T\right) \qquad
f_{\bar z} = \frac{e^{i\theta}}{2}\left(f_N + i f_T\right)
\]
Also, the Jacobian determinant of $f$ is
\[
J_f= \abs{f_z}^2-\abs{f_{\bar z}}^2 = \im \overline{f_N} f_T.
\]
Except for the origin, where polar coordinates collapse, we may define the {\it normal} and {\it tangential distortion}  of $f$ as follows.
\begin{align}
K_N^f & := \frac{\abs{f_z+ \frac{\bar z}{z}f_{\bar z}}^2}{J_f}= \frac{\abs{f_N}^2}{ J_f}\\
K_T^f & := \frac{\abs{f_z- \frac{\bar z}{z}f_{\bar z}}^2}{J_f}= \frac{\abs{f_T}^2} {J_f}
\end{align}
By convention these quotients are understood as $0$ whenever the numerator vanishes. Naturally,
they assume the value $+\infty$ if the Jacobian vanishes but the numerator does not.
For a mapping $f\in W^{1,1}_{\rm loc}$ the quantities $f_N$, $f_T$, and $J_f$ are finite a.e.,
and therefore $K_N^f$ and $K_T^f$ are unambiguously defined at almost every point of the domain of
definition of $f$.

\begin{proposition}\label{rwrho}
Let $\Omega$ and $\Omega^\ast$ be bounded doubly connected domains such that $\Omega$ separates $0$ and $\infty$.
Suppose that \underline{either}
\begin{enumerate}[(a)]
\item $f\in \mathfrak{D}(\Omega, \Omega^*)$  \underline{or}
\item $f\colon\Omega \onto \Omega^*$ is a sense-preserving homeomorphism of class $W^{1,1}_{\rm loc}(\Omega, \Omega^*)$.
\end{enumerate}
Then
\begin{equation}\label{rwrho0}
2\pi \Mod\Omega^* \le \int_{\Omega} K_N^f \frac{d z}{\abs{z}^2}.
\end{equation}
\end{proposition}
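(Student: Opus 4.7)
The plan is to reduce to a round annular target by postcomposition with a conformal map $\phi\colon \Omega^*\onto \mathbb A:=\{w:r<\abs{w}<R\}$, then bound the radial oscillation of $g:=\phi\circ f$ and close the estimate with Cauchy-Schwarz. Since $\phi$ is conformal, a chain-rule calculation gives $K_N^g=K_N^f$ almost everywhere in $G:=f^{-1}(\Omega^*)$, and $\Mod\Omega^* = \log(R/r)$. If the right-hand side of~\eqref{rwrho0} is infinite there is nothing to prove; otherwise Lemma~\ref{multilemma} together with $K_N^f<\infty$ a.e.\ forces $f_N=0$ a.e.\ on $\Omega\setminus G$, so every integral below can be restricted to $G$.

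The geometric heart of the argument is to show that for almost every angle $\theta\in[0,2\pi)$ the ray $\{\rho e^{i\theta}:\rho>0\}$ contains an open segment $J_\theta\subset\Omega$ on which $\abs{g}$ has essential oscillation at least $\log(R/r)$. In case~(b) this is immediate planar topology: since $\Omega$ separates $0$ from $\infty$, at least one component of $\{\rho e^{i\theta}:\rho>0\}\cap\Omega$ joins the two components of $\partial\Omega$, its $f$-image joins the two components of $\partial\Omega^*$, and hence its $g$-image joins the two circles of $\partial\mathbb A$. In case~(a) I invoke Lemma~\ref{mono}: the monotone extension $\bow{f}\colon \s^2\onto\s^2$ induces a bijection on the two punctures, and combined with $\delta_f\to 0$ at $\partial\Omega$ this forces the two ends of $J_\theta$ to accumulate under $\phi\circ f$ on \emph{distinct} circles of $\partial\mathbb A$. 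Using the ACL property of $g$ on a.e.\ radial ray (Fubini) and passing to limits toward the endpoints of $J_\theta$,
\begin{equation*}
 \int_{J_\theta}\frac{\abs{g_N(\rho e^{i\theta})}}{\abs{g(\rho e^{i\theta})}}\,d\rho \;\ge\; \Bigl|\int_{J_\theta}\partial_\rho\log\abs{g}\,d\rho\Bigr| \;\ge\; \log(R/r) \;=\; \Mod\Omega^*.
\end{equation*}

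Integrating in $\theta$ and using $dz=\rho\,d\rho\,d\theta$ gives $2\pi\Mod\Omega^* \le \int_\Omega \frac{\abs{g_N}}{\abs{g}\abs{z}}\,dz$. Factoring $\frac{\abs{g_N}}{\abs{g}\abs{z}}=\frac{\abs{g_N}}{\sqrt{J_g}\,\abs{z}}\cdot\frac{\sqrt{J_g}}{\abs{g}}$ and applying Cauchy-Schwarz yields
\begin{equation*}
 (2\pi\Mod\Omega^*)^2 \;\le\; \int_\Omega \frac{K_N^g}{\abs{z}^2}\,dz \;\cdot\; \int_\Omega \frac{J_g}{\abs{g}^2}\,dz \;=\; \int_\Omega \frac{K_N^f}{\abs{z}^2}\,dz \;\cdot\; \int_\Omega \frac{J_g}{\abs{g}^2}\,dz.
\end{equation*}
By Lemma~\ref{cvdeform} in case~(b) and Corollary~\ref{useless} in case~(a), $\int_\Omega J_g/\abs{g}^2\,dz \le \int_{\mathbb A} dw/\abs{w}^2 = 2\pi\log(R/r) = 2\pi\Mod\Omega^*$, and dividing by this positive finite factor produces~\eqref{rwrho0}. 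The main obstacle is the radial step in the deformation case: ruling out for a.e.\ $\theta$ that $\bow{f}$ collapses both ends of $J_\theta$ onto the same puncture. Once Lemma~\ref{mono} is invoked this reduces to the homeomorphism case, and the remaining computations are routine.
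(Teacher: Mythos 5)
Your overall strategy is the paper's own: reduce to a round annulus by postcomposing with a conformal map, prove a radial length estimate on almost every ray, close with Cauchy--Schwarz and the change-of-variables bound for $\int J_g/\abs{g}^2$. The genuine gap is in case (a), precisely at the step you flag as the ``main obstacle'' --- but you misidentify what the obstacle is. The issue is not whether $\bow{f}$ collapses the two punctures (it cannot; it restricts to a bijection of $P$), but that a deformation may send interior points of $\Omega$ into $\partial\Omega^\ast$, i.e.\ $G=f^{-1}(\Omega^\ast)$ may be a proper subset of $\Omega$. Your inequality $\int_{J_\theta}\partial_\rho\log\abs{g}\,d\rho\ge\log(R/r)$ presupposes that $g=\phi\circ f$ is defined and absolutely continuous, with $\log\abs{g}$ meaningful, along all of $J_\theta$. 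If $J_\theta$ is the full segment of the ray crossing $\Omega$, this fails on $J_\theta\setminus G$ (and $\phi$ need not extend continuously to $\partial\Omega^\ast$); if instead $J_\theta$ is a component of $\ell_\theta\cap G$, its endpoints need not lie on $\partial\Omega$ at all, and the puncture bijection together with $\delta_f\to 0$ says nothing about which circle $\abs{g}$ approaches at an interior endpoint where $f$ hits $\partial\Omega^\ast$. What must be proved is that some \emph{single} component of $\ell_\theta\cap G$ carries the full sweep of $\abs{g}$ from near $r$ to near $R$. The paper obtains this from Lemma~\ref{goodset} --- $G$ separates the two boundary components of $\Omega$ --- via an intermediate-value argument for the relative distance function $\delta(z)=\dist(f(z),\partial_I\Omega^\ast)/\bigl[\dist(f(z),\partial_I\Omega^\ast)+\dist(f(z),\partial_O\Omega^\ast)\bigr]$, which takes only the values $0$ and $1$ on $\Omega\setminus G$ and on $\C\setminus\Omega$. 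You never invoke Lemma~\ref{goodset}, and Lemma~\ref{mono} by itself does not substitute for it; so case (a) does not ``reduce to the homeomorphism case'' as claimed.

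A second, smaller defect: the proposition as stated allows $\Mod\Omega^\ast=\infty$ (punctured targets), a case the paper actually uses later (Corollary~\ref{cpositive} and hence Theorem~\ref{q4}). Then $r=0$, your bound $\int_G J_g/\abs{g}^2\le 2\pi\log(R/r)$ is vacuous, and the final ``dividing by this positive finite factor'' is illegitimate. The paper's proof is arranged to cover this: the whole argument is run with the weight $1/(\abs{g}+\epsilon)$, producing $\log\frac{1+\epsilon}{r_\ast+\epsilon}\le\frac{1}{2\pi}\int_\Omega K_N^f\abs{z}^{-2}$ for each $\epsilon>0$, and $\epsilon\to0$ is taken only at the end. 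Your write-up needs the same regularization (or a separate limiting argument) to prove the statement in full; with it, and with the separation lemma supplied in case (a), your argument coincides with the paper's.
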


\begin{proof}
There is nothing to prove if the integral in~\eqref{rwrho0} is infinite, so we assume $K_N^f <\infty$ a.e.
There exists a conformal mapping $\Phi \colon \Omega^\ast \to A(r_\ast,1)=: \A^\ast$ where $0 \le r_\ast<1$ is
such that $\Mod \Omega^\ast = \log 1/r_\ast$.
Let $G= \{z\in \Omega \colon f(z) \in \Omega^\ast\}$ and define $g \colon G \to \A^\ast$ by $g= \Phi \circ f$.
Note that $G=\Omega$ if we are in the case (b).

Fix $\epsilon >0$. We claim that
\begin{equation}\label{rhoderiv}
\int_G \frac{\abs{g_N}}{\abs{g} + \epsilon} \, \frac{dz}{\abs{z}} \ge 2 \pi \log \frac{1+\epsilon}{r_\ast+\epsilon}.
\end{equation}
Let $\ell_\theta= \{\rho e^{i\theta}\in G \colon \rho>0\}$, $\theta\in [0, 2\pi]$. For almost every $\theta \in [0, 2\pi]$ the mapping $g$ is locally absolutely continuous on  $\ell_\theta$. The image of $\ell_\theta$ under $g$ is a union of curves  in $\A^\ast$ which approach the boundary of $\A^\ast$ in both directions. At least one of them connects two boundary components of $\A^\ast$ because  $G$ separates the boundary components of $\Omega$ by Lemma~\ref{goodset}. Therefore the function $\abs{g}$ attains all values between $r_\ast$ and $1$ when restricted to some connected component of $\ell_\theta \cap G$. It follows that
\[
\int_{\ell_\theta \cap G} \frac{\abs{g_N}}{\abs{g} +\epsilon} \ge  \log \frac{1+\epsilon}{r_\ast+\epsilon}.
\]
Integration with respect to $\theta$ yields~\eqref{rhoderiv}.
Using the normal distortion inequality $ \abs{g_N}^2 \le K_N^g J_g$ and the
Cauchy-Schwarz inequality we obtain
\[
\begin{split}
\left( \int_G \frac{\abs{g_N}}{\abs{g} + \epsilon}\, \frac{dz}{\abs{z}} \right)^2 &\le
\left( \int_G \frac{ (K_N^g J_g)^{1/2}}{\abs{g} + \epsilon} \, \frac{dz}{\abs{z}} \right)^2\le
\int_G \frac{J_g}{(\abs{g} + \epsilon)^2} \; \int_G K_N^g \frac{d z}{\abs{z}^2} \\ &\le
\int_G \frac{J_g}{(\abs{g} + \epsilon)^2} \; \int_\Omega K_N^g\,\frac{d z}{\abs{z}^2} .
\end{split}
\]
Since $\Phi$ is conformal, $K_N^g=K_N^f$.  Thus we infer from~\eqref{rhoderiv} that
\begin{equation}\label{nothing}
\left( \log \frac{1+\epsilon}{r_\ast+\epsilon}  \right)^2 \le
\frac{1}{(2 \pi)^2}  \int_\Omega K_N^f\,\frac{ d z}{\abs{z}^2} \,
\int_G \frac{J_g}{(\abs{g} + \epsilon)^2}.
\end{equation}
From Lemmas~\ref{cvdeform} and~\ref{multilemma} we obtain
\begin{equation}\label{est9}
\int_{G} \frac{J_g}{(\abs{g} + \epsilon)^2} \le \int_{\A^\ast} \frac{dw}{(\abs{w}+\epsilon)^2} \le 2 \pi \log \frac{1+\epsilon}{r_\ast+\epsilon}.
\end{equation}
It follows from~\eqref{nothing} and~\eqref{est9}  that
\[ \log \frac{1+\epsilon}{r_\ast+\epsilon} \le \frac{1}{2\pi} \int_\Omega K_N^f\,\frac{ d z}{\abs{z}^2}. \]
Letting $\epsilon \to 0$ completes the proof.
\end{proof}

Unlike Proposition~\ref{rwrho}, our lower bound for the modulus of the image under a deformation
depends on  the rectifiability of the boundary of $\Omega^*$.
We do not know if this assumption is redundant.

\begin{proposition}\label{rwthe}
Let $\A=A(r,R)$ be a circular annulus, $0\le r<R<\infty$, and $\Omega^*$ a bounded doubly connected domain with finite modulus.
Suppose that \underline{either}
\begin{enumerate}[(a)]
\item $f\in \mathfrak{D}(\A,\Omega^*)$ and $\Omega^\ast$ is bounded by rectifiable Jordan curves, \underline{or}
\item $f\colon\A\onto \Omega^*$ is a sense-preserving homeomorphism of class $W^{1,1}_{\rm loc}(\A,\Omega^*)$.
\end{enumerate}
Then
\begin{equation}\label{rwthe0}
 \int_{\A} K_T^f \, \frac{d z}{\abs{z}^2} \ge 2\pi \frac{(\Mod \A)^2}{\Mod \Omega^\ast}.
\end{equation}
\end{proposition}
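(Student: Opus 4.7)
The plan is a length-area argument in polar coordinates: for a.e.\ $\rho\in(r,R)$, the circle $C_\rho=\{|z|=\rho\}$ is transported by $f$ to a closed curve winding once around the hole of $\Omega^*$, and the tangential distortion controls how short that image can be in the natural conformal metric on $\Omega^*$.

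First I would reduce to the case of a circular annulus target. Let $\Phi\colon\Omega^*\to\A^*:=A(r_*,R_*)$ be a conformal mapping with $\log(R_*/r_*)=\Mod\Omega^*$, and set $g=\Phi\circ f$. Since $\Phi$ is conformal, $K_T^g=K_T^f$. Plugging $v(w)=|\Phi'(w)|^2/|\Phi(w)|^2$ into Corollary~\ref{useless} (in case~(a)) or into the classical change-of-variable formula for Sobolev homeomorphisms (in case~(b)), and then performing the conformal change of variables $w\mapsto \Phi(w)$ on $\Omega^*$, one obtains
\begin{equation*}
\int_\A\frac{J_g(z)}{|g(z)|^2}\,dz=\int_{\A^*}\frac{dw}{|w|^2}=2\pi\,\Mod\Omega^*.
\end{equation*}
The assumption of rectifiability of $\partial\Omega^*$ in case~(a) will enter here through the Carath\'eodory extension of $\Phi$.

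The heart of the argument is the length-area estimate $L(\rho)\ge 2\pi$ for a.e.\ $\rho$, where
\begin{equation*}
L(\rho):=\int_0^{2\pi}\frac{|g_\theta(\rho e^{i\theta})|}{|g(\rho e^{i\theta})|}\,d\theta
\end{equation*}
is the length of the image curve $g(C_\rho)$ in the conformal metric $|dw|/|w|$ on $\A^*$. In case~(b), $g$ is a homeomorphism, $g(C_\rho)$ is a Jordan curve separating the components of $\partial\A^*$, and its winding number around $0$ is $\pm 1$, forcing $L(\rho)\ge 2\pi$. In case~(a), $g(C_\rho)$ may touch $\partial\A^*$ and fail to be simple; here I would use that the approximating homeomorphisms $g_j=\Phi\circ f_j$ converge uniformly to $g$ on the compact set $C_\rho$ and each has winding number $+1$ about $0$, so the winding persists in the limit (with the monotonicity of $\bow{g}$ from Lemma~\ref{mono} and $c\delta$-uniform convergence as the background). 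This step is where I expect the main obstacle, since it requires carrying the topological winding argument past the homeomorphism setting.

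The remainder is two applications of Cauchy-Schwarz. Using $|g_\theta|^2=\rho^2 K_T^g J_g$ one factorizes $|g_\theta|/|g|=\sqrt{K_T^g}\cdot\rho\sqrt{J_g}/|g|$, and Cauchy-Schwarz in $\theta$ gives $L(\rho)^2\le A(\rho)B(\rho)$ with
\begin{equation*}
A(\rho):=\int_0^{2\pi}K_T^g\,d\theta,\qquad B(\rho):=\int_0^{2\pi}\frac{\rho^2 J_g}{|g|^2}\,d\theta.
\end{equation*}
Since $\int_r^R A(\rho)/\rho\,d\rho=\int_\A K_T^f\,dz/|z|^2$, the bound $L(\rho)\ge 2\pi$ yields $\int_\A K_T^f\,dz/|z|^2\ge 4\pi^2\int_r^R d\rho/(\rho B(\rho))$. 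A second Cauchy-Schwarz applied to $\log(R/r)=\int_r^R d\rho/\rho$, combined with the identity $\int_r^R B(\rho)/\rho\,d\rho=\int_\A J_g/|g|^2\,dz=2\pi\Mod\Omega^*$ from the previous paragraph, yields the claimed lower bound $2\pi(\Mod\A)^2/\Mod\Omega^*$.
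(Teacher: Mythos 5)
Your overall route coincides with the paper's: compose with a conformal map onto a circular annulus, prove a logarithmic length bound $\ge 2\pi$ on almost every circle, and close with Cauchy--Schwarz together with the change-of-variables bound on $\int J_g/\abs{g}^2$ (your two one-dimensional Cauchy--Schwarz steps are an immaterial repackaging of the paper's single two-dimensional one; also, in case (b) your claimed equality $\int_\A J_g/\abs{g}^2=2\pi\Mod\Omega^*$ should be the inequality $\le$, since a $W^{1,1}_{\rm loc}$ homeomorphism need not satisfy Lusin's condition (N) --- but only $\le$ is needed). Case (b) of your argument is sound.

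In case (a) there is a genuine gap, and it sits exactly where you flagged the obstacle but then passed over it. The winding number of $g$ on $\T_\rho$ is indeed $\pm 1$ (the paper obtains this from the fact that $\arg(h_j-w_\circ)$ increases by $2\pi$), but nonzero winding does not by itself yield $L(\rho)=\int_0^{2\pi}\abs{g_\theta}/\abs{g}\,d\theta\ge 2\pi$: that step recovers the total increase of the argument by integrating its a.e.\ derivative, which requires $\theta\mapsto g(\rho e^{i\theta})$ to be absolutely continuous. On $\T_\rho\setminus G$, where $G=f^{-1}(\Omega^*)$, the map $f$ takes values on $\partial\Omega^*$ and $g=\Phi\circ f$ is the composition of an AC circle map with a map $\Phi$ that is merely continuous, not Lipschitz, up to the boundary; so $g|_{\T_\rho}$ need not be AC there, and the identities $K_T^g=K_T^f$ and $\abs{g_\theta}^2=\rho^2K_T^gJ_g$, which you use over the whole circle, are justified only on $G$. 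Concretely, since $J_f=0$ a.e.\ off $G$ and $K_T^f<\infty$ a.e., one has $f_\theta=0$ a.e.\ on $\T_\rho\setminus G$ (this is \eqref{cru1}); the curve could then, in principle, acquire all of its winding while sliding along $\partial\A^*$ over a set where $f_\theta=0$, contributing nothing to $\int_{\T_\rho\cap G}\abs{g_T}/\abs{g}$ and nothing detectable by $K_T^f$, and your estimate would fail. This is precisely why the paper restricts every integral to $G$ and proves \eqref{the2} by showing that the portion of $g(\T_\rho)$ lying on $\partial\A^*$ has zero length: $f_\theta=0$ a.e.\ on $\T_\rho\setminus G$ gives $\mathcal H^1\bigl(f(\T_\rho)\cap\partial\Omega^*\bigr)=0$, and the F.~and M.~Riesz--Smirnov property of conformal maps onto domains with rectifiable boundary, part (iii) of Proposition~\ref{hardy}, transfers this to $\partial\A^*$. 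In your proposal rectifiability enters only through the Carath\'eodory extension, which needs just Jordan boundary; since this hypothesis must be used somewhere (the paper does not even know whether it is removable), its absence from your argument is the mark of the missing step.
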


Before proving Proposition~\ref{rwthe} we collect some results concerning the Hardy space $H^1(\A)$ on a circular annulus $\A=A(r,R)$, $0<r<R<\infty$. In what follows $\T_\rho =\{z \in \C \colon \abs{z}=\rho\}$, $\rho > 0$.
By definition, a holomorphic function $\psi\colon \A\to \C$
belongs to $H^1(\A)$ if the integrals $\int_{\T_\rho} \abs{\psi}$ are uniformly bounded for $r<\rho<R$.
Such a function $\psi$ has nontangential limits a.e. on $\partial\A$~\cite[p.6]{Sa}, and $\psi\ne 0$ a.e. on $\partial A$
unless $\psi\equiv 0$~\cite[pp.10--12]{Sa}. The relation between $H^1(\A)$ and domains with rectifiable boundaries is summarized
in the following proposition which is a version of classical theorems due to F.~and M.~Riesz and V.~I.~Smirnov.
Below $\mathcal H^1$ denotes the one-dimensional Hausdorff measure, not to be confused with the Hardy space.

\begin{proposition}\label{hardy} Let $\Omega$ be a doubly connected domain  bounded by rectifiable Jordan curves and let
$\Psi \colon \A=A(r,R) \to \Omega$ be conformal. Then
\begin{enumerate}[(i)]
\item\label{nn} $\Psi'\in H^1 (\A)$
\item\label{nnn} for any Borel set $E\subset \partial \A$ we have $\mathcal H^1(\Psi(E)) = \int_E \abs{\Psi'}$
\item\label{nnnn} $\mathcal H^1(\Psi(E))=0$ if and only if $\mathcal H^1(E)=0$.
\end{enumerate}
\end{proposition}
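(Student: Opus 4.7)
The plan is to reduce the doubly-connected statement to its classical simply-connected ancestors---Smirnov's theorem and the F.~and~M.~Riesz theorem---by slitting $\A$ along a well-chosen radial segment. Since $\int_{\A}|\Psi'|^{2}=|\Omega|<\infty$, Cauchy-Schwarz gives $\int_{\A}|\Psi'|/|z|\,dA<\infty$, so Fubini provides an angle $\theta_{0}$ for which the radial integral $\int_{r}^{R}|\Psi'(te^{i\theta_{0}})|\,dt$ is finite. Fix such $\theta_{0}$ and let $\gamma:=\{te^{i\theta_{0}}:r\le t\le R\}$. Then $D:=\A\setminus\gamma$ is a simply-connected Jordan domain whose boundary (two circles plus the two sides of $\gamma$) is rectifiable, and likewise $\Psi(D)=\Omega\setminus\Psi(\gamma)$ is a simply-connected Jordan domain with rectifiable boundary.

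Choose a conformal $\phi\colon\DD\onto D$ and set $\widetilde\Psi:=\Psi\circ\phi\colon\DD\onto\Psi(D)$. Applying Smirnov's theorem to $\phi$ and to $\widetilde\Psi$ yields $\phi',\widetilde\Psi'\in H^{1}(\DD)$, while F.~and~M.~Riesz supplies the a.e.\ nonvanishing nontangential boundary values and the arclength identities $\mathcal H^{1}(\phi(F))=\int_{F}|\phi'|\,d\mathcal H^{1}$ and $\mathcal H^{1}(\widetilde\Psi(F))=\int_{F}|\widetilde\Psi'|\,d\mathcal H^{1}$ for every Borel $F\subset\partial\DD$. Using the chain rule $\widetilde\Psi'=(\Psi'\circ\phi)\,\phi'$, I define the nontangential boundary values of $|\Psi'|$ on $\partial\A$ (off the two slit endpoints, which are $\mathcal H^{1}$-null) by $|\Psi'|(\phi(w)):=|\widetilde\Psi'(w)|/|\phi'(w)|$. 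For any Borel $E\subset\partial\A$ with $F:=\phi^{-1}(E)$, chaining the two Riesz identities gives
\[
\mathcal H^{1}(\Psi(E))=\mathcal H^{1}(\widetilde\Psi(F))=\int_{F}|\widetilde\Psi'|\,d\mathcal H^{1}=\int_{E}|\Psi'|\,d\mathcal H^{1},
\]
which is (ii). Since $|\phi'|,|\widetilde\Psi'|\ne 0$ a.e.\ on $\partial\DD$, also $|\Psi'|\ne 0$ a.e.\ on $\partial\A$, proving (iii).

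For (i) the goal is $\sup_{r<\rho<R}\int_{\T_{\rho}}|\Psi'|\,|dz|<\infty$. Because $\Psi'$ is holomorphic and nonvanishing, $|\Psi'|$ is subharmonic on $\A$, and by the classical Hardy convexity theorem the circle integral $M(\rho):=\int_{\T_{\rho}}|\Psi'|\,|dz|$ is a convex function of $\log\rho$. The preceding paragraph supplies integrable boundary values with $\int_{\T_{r}}|\Psi'|=\ell(\partial_{I}\Omega)=:L_{I}$ and $\int_{\T_{R}}|\Psi'|=\ell(\partial_{O}\Omega)=:L_{O}$. Let $U$ be the harmonic function on $\A$ solving the Dirichlet problem with these boundary data. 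Since $U$ is harmonic on an annulus its circle mean is affine in $\log\rho$, so $\int_{\T_{\rho}}U\,|dz|$ is bounded on $[r,R]$ by a constant depending only on $L_{I},L_{O},r,R$. A maximum-principle comparison---using the $L^{1}$-integrability of the nontangential boundary limits of $|\Psi'|$ just established---gives $|\Psi'|\le U$ in $\A$, whence $M(\rho)\le\int_{\T_{\rho}}U\,|dz|$ is uniformly bounded, which is (i).

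The main obstacle will be justifying the comparison $|\Psi'|\le U$ on the annulus: both boundary components of $\A$ must be handled simultaneously, and one must verify that the subharmonic function $|\Psi'|-U$ attains nonpositive boundary data in a strong enough sense for the maximum principle to apply. The cleanest route seems to be to exhaust $\A$ by slightly smaller annuli on which the standard maximum principle is available and then pass to the limit, using the $H^{1}(\DD)$-bound on $\widetilde\Psi'$ transported back through $\phi$ to dominate $|\Psi'|$ uniformly near each boundary component of $\A$.
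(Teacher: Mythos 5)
The decisive gap is in your proof of part (i). The comparison $\abs{\Psi'}\le U$, where $U$ is the harmonic extension of the a.e.\ boundary values of $\abs{\Psi'}$, is not a maximum-principle statement: a subharmonic function with integrable nontangential boundary values need not be dominated by the harmonic extension of those values (on $\DD$ the Poisson kernel $z\mapsto \re\frac{1+z}{1-z}$ has boundary values $0$ a.e.\ but is positive), and such a domination is essentially equivalent to $\abs{\Psi'}$ having a harmonic majorant, i.e.\ to $\Psi'\in H^1(\A)$ --- the very assertion being proved. Your proposed repair does not close this circle: exhausting by subannuli $A(r_j,R_j)$ and comparing with the harmonic extension of $\abs{\Psi'}$ on $\T_{r_j}\cup\T_{R_j}$ only re-derives the convexity of the circle means in $\log\rho$, so you still need $\int_{\T_{\rho}}\abs{\Psi'}$ to stay bounded along some sequences $\rho\to r$ and $\rho\to R$; and the ``transported $H^1(\DD)$ bound'' for $\widetilde\Psi'=(\Psi'\circ\phi)\phi'$ controls integrals over circles $\{\abs{w}=s\}$ in $\DD$, whereas $\int_{\T_\rho}\abs{\Psi'}\,\abs{dz}=\int_{\phi^{-1}(\T_\rho)}\abs{\widetilde\Psi'}\,\abs{dw}$ and the curves $\phi^{-1}(\T_\rho)$ are crosscuts of $\DD$, over which mere membership in $H^1(\DD)$ gives no uniform bound. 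So (i) is not established. There are also inaccuracies in the reduction used for (ii)--(iii): $\A\setminus\gamma$ and $\Omega\setminus\Psi(\gamma)$ are \emph{not} Jordan domains (the slit is a two-sided boundary arc), so Smirnov and F.~and~M.~Riesz do not apply verbatim; one must use the version for a conformal map of $\DD$ with continuous extension and boundary function of bounded variation (finite prime-end boundary length), and the identity $\mathcal H^1(f(F))=\int_F\abs{f'}$ is false for general Borel $F\subset\partial\DD$ in a slit domain (the two preimage arcs of the slit produce a factor $2$); it holds only where the boundary correspondence is injective, which is the case for $F=\phi^{-1}(E)$, $E\subset\partial\A$, so this portion is repairable, though you should also verify that the transported boundary values agree a.e.\ with the nontangential limits of $\Psi'$.

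The paper avoids all of this by proving (i) first and directly on the annulus, exactly as in the disk case: since $\partial\Omega$ consists of Jordan curves, $\Psi$ extends to a homeomorphism of $\overline{\A}$ onto $\overline{\Omega}$, the boundary functions $\theta\mapsto\Psi(re^{i\theta})$ and $\theta\mapsto\Psi(Re^{i\theta})$ have total variation equal to the lengths of the two boundary curves, and the rotational covariance of harmonic measure on $\A$ shows that $\theta\mapsto\Psi(\rho e^{i\theta})$ is an average of rotates of these boundary functions, whence $\int_{\T_\rho}\abs{\Psi'}\,\abs{dz}\le \mathcal H^1(\partial_I\Omega)+\mathcal H^1(\partial_O\Omega)$ for all $\rho$, i.e.\ $\Psi'\in H^1(\A)$. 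Then (ii) is the absolute continuity of the boundary extension, and (iii) follows because a nontrivial $H^1(\A)$ function is nonzero a.e.\ on $\partial\A$. If you want to keep your slit reduction for (ii)--(iii), you should still obtain (i) by this direct variation/averaging argument rather than by the harmonic-majorant comparison.
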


\begin{proof} Part~\eqref{nn} is proved in exactly the same way as the corresponding result for the disk~\cite[p.~200]{GMb}.
Since $\Psi'\in H^1$, the continuous extension of $\Psi$ to $\partial \A$ is absolutely continuous, i.e.,~\eqref{nnn} holds.
Part~\eqref{nnnn} follows from~\eqref{nnn} because $\Psi'\ne 0$ a.e. on $\partial \A$.
\end{proof}

\begin{proof}[Proof of Proposition~\ref{rwthe}]
There is nothing to prove if the integral in~\eqref{rwrho0} is infinite, so we assume
$K_T^f <\infty$ a.e.
Let $G= \{z\in \A \colon f(z) \in \Omega^\ast\}$. Note that $G$ coincides with $\A$ if we are in the case~(b).
On the set $\A\setminus G$ the Jacobian $J_f$ vanishes by Lemma~\ref{goodset}.  Since
$K_T^f$ is finite a.e., it follows that $f_\theta=0$ a.e. on $\A\setminus G$.
There exists a conformal mapping $\Phi \colon \Omega^\ast \to A(r_\ast,1)=: \A^\ast$, where $0< r_\ast<1$
is determined by $\Mod \Omega^\ast = \log 1/r_\ast$.
In case (a) $\Phi$ extends to a homeomorphism $\Phi\colon \overline{\Omega^*}\to \overline{\A^*}$.
In either case (a) or (b) we can define $g= \Phi \circ f \colon \A \to \overline{\A^\ast}$.

We  claim that
\begin{equation}\label{thederiv}
\int_G \frac{\abs{g_T}}{ \abs{g} }\, \frac{dz}{\abs{z}} \ge 2 \pi \Mod \A.
\end{equation}
Indeed, for almost every circle $\T_\rho\subset \A$ the mapping $f$ is absolutely continuous on $\T_\rho$
and
\begin{equation}\label{cru1}
f_\theta=0 \quad \text{a.e. on }\T_\rho\setminus G.
\end{equation}
For any such $\rho$ we are going to prove the inequality
\begin{equation}\label{the2}
\int_{\T_\rho \cap G} \frac{\abs{g_T}}{\abs {g}} \ge 2 \pi,
\end{equation}
from which~\eqref{thederiv} will follow by integration.

In the case~(b) the inequality~\eqref{the2} is a direct consequence of the fact that  the curve
$g(\T_\rho)$ separates the boundary components of $\A^*$; indeed, the length of any such curve
in the logarithmic metric $\abs{dz}/\abs{z}$ is at least $2\pi$.

We now turn to the case~(a). Let $w_\circ$ be an interior point of the bounded component of $\C\setminus \Omega^*$.
Choose an approximating sequence $\{h_j\}_{j\in\N} \subset  {\Ho}^{1,2}(\Omega,\Omega^*)$ that converges to $f$.
Note that for each $j$ the multivalued function $\arg (h_j(z)-w_\circ)$ increases by $2\pi$ on $\T_\rho$.
Letting $j\to \infty$ we obtain the same for $f$; in particular, $f(\T_\rho)$ separates $w_\circ$ from $\infty$.
Since $\Phi\colon \overline{\Omega^*}\to \overline{\A^*}$ is a homeomorphism, $g(\T_\rho)$ is a closed curve in $\overline{\A^*}$ which separates $0$ from $\infty$. Therefore, its length in the logarithmic metric $\abs{dz}/\abs{z}$ is at least $2\pi$.
By virtue of~\eqref{cru1} the intersection of $f(\T_\rho)$ with $\partial\Omega^*$
has zero length. By part~\eqref{nnnn} of Theorem~\ref{hardy} we have
$\mathcal H^1(g(\T_\rho)\cap \partial \A^*)=0$.
Hence, the part of $g(\T_\rho)$ that is contained in $\A^*$ has logarithmic length at least $2\pi$. This is exactly what~\eqref{the2} claimed.

Now that~\eqref{thederiv} is at our disposal, we proceed as in the proof of Proposition~\ref{rwrho}.
The Cauchy-Schwarz inequality yields
\begin{equation}\label{star}
\begin{split}
\left( \int_G \frac{\abs{g_T}}{ \abs{g} } \, \frac{dz}{\abs{z}} \right)^2 &\le
\left( \int_G \frac{ (K_T^g J_g)^{1/2}}{  \abs{g} } \, \frac{dz}{\abs{z}} \right)^2\le
\int_G \frac{J_g}{\abs{g}^2} \; \int_G K_T^g\,  \frac{d z}{\abs{z}^2} \\
&\le  2\pi \log \frac{1}{r_*} \; \int_\A K_T^g\,\frac{d z}{\abs{z}^2}
= 2\pi \Mod\Omega^* \int_\A K_T^g\,\frac{d z}{\abs{z}^2} .
\end{split}
\end{equation}
where the second to last inequality follows from~\eqref{est9}. It remains to combine~\eqref{thederiv} and~\eqref{star}.
\end{proof}

\section{Hopf differentials}\label{hopsec}

We call a deformation $h\in \D(\Omega,\Omega^*)$ \emph{stationary}
if
\begin{equation}\label{stat}
\frac{d}{dt}\bigg|_{t=0}\E[h\circ \phi_t^{-1}]=0
\end{equation}
for every family of diffeomorphisms $\phi_t\colon \Omega\to\Omega$ which depend smoothly on the parameter $t\in\mathbb R$ and satisfy $\phi_0=\id$. It should be emphasized that apart from $\phi_0$, the diffeomorphisms $\phi_t$
need not agree with the identity on the boundary.
The derivative in~\eqref{stat} exists for any $h\in W^{1,2}(\Omega)$, see~\cite[p.~158]{SY}.
Every energy-minimal deformation is stationary. Indeed,
$h\circ \phi_t^{-1}$ belongs to $\D(\Omega,\Omega^*)$ by virtue of Lemma~\ref{KM}.
The minimal property of $h$ implies $\E[h\circ \phi_t^{-1}]\ge \E[h]$, from where ~\eqref{stat} follows.

The key property of the stationary mapping in~\eqref{stat} is that:
\begin{itemize}
\item The function $\varphi:= h_z\overline{h_{\bar z}}$, a priori in $L^1(\Omega)$, is actually  holomorphic.
\item If $\partial \Omega$ is $\CC^1$-smooth then $\varphi$ extends continuously to $\overline{\Omega}$, and the quadratic differential $\varphi \, dz^2$  is real on each boundary curve of $\Omega$.
\end{itemize}

See~\cite[Lemma 1.2.5]{Job} for the proof of the above facts and~\cite[Chapter~III]{Jeb} for the background on quadratic differentials and their
trajectories.
Let us consider the special case $\Omega=A(r,R)$ with $0<r<R<\infty$. Since $\varphi \, dz^2$
is real on each boundary circle, the function $z^2 \varphi(z)$ is real on $\partial \Omega$. By the maximum principle
\begin{equation}\label{above}
z^2\varphi(z)\equiv c\in\R.
\end{equation}
We state this as a lemma for the ease of future references.

\begin{lemma}\label{ctheory}
Let $\Omega=A(r,R)$ be a circular annulus, $0<r<R<\infty$, and $\Omega^*$ a bounded doubly connected domain.
If $h\in \D(\Omega,\Omega^*)$ is a stationary deformation, then
\begin{equation}\label{hopf1}
h_z\overline{h_{\bar z}} \equiv \frac{c}{z^2}\qquad \text{in }\Omega.
\end{equation}
where $c\in\R$ is a constant. Furthermore,
\begin{equation}\label{important}
\begin{cases}
\abs{h_N}^2 \le J_h, & \quad \mbox{if } c \le 0 \\
\abs{h_T}^2 \le J_h, & \quad \mbox{if } c \ge 0.
\end{cases}
\end{equation}
\end{lemma}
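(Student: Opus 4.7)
The plan rests on two separate steps: turning the stationarity of $h$ into the Hopf equation~\eqref{hopf1}, and then unpacking that equation in polar coordinates to read off the inequalities~\eqref{important}.

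For the first step I would invoke the two facts stated just before the lemma. Stationarity makes the Hopf function $\varphi := h_z\overline{h_{\bar z}}$ holomorphic in $\Omega = A(r,R)$, and because the boundary circles $\T_r$ and $\T_R$ are smooth, $\varphi$ extends continuously to $\overline{\Omega}$ with $\varphi\,dz^2$ real on each of them. Parametrizing a boundary circle by $z = \rho\,e^{i\theta}$ gives $dz = iz\,d\theta$ and hence $dz^2 = -z^2\,d\theta^2$, so the reality of $\varphi\,dz^2$ on $\partial\Omega$ is equivalent to $g(z) := z^2\varphi(z) \in \R$ on $\partial\Omega$. Now $g$ is holomorphic in $\Omega$, continuous on $\overline{\Omega}$, and $\im g$ is harmonic with zero boundary values. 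The maximum principle forces $\im g \equiv 0$, so $g$ is a real-valued holomorphic function on a connected domain and hence a constant $c \in \R$. This is precisely~\eqref{hopf1}.

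For the second step I would translate~\eqref{hopf1} via the polar-coordinate formulas recorded at the start of \S\ref{reisec}. Writing $h_z = \tfrac12 e^{-i\theta}(h_N - i h_T)$ and $\overline{h_{\bar z}} = \tfrac12 e^{-i\theta}(\overline{h_N} - i\overline{h_T})$ and multiplying, a direct computation gives, a.e.\ in $\Omega$, $z^2 h_z\overline{h_{\bar z}} = \tfrac{\rho^2}{4}\bigl[(\abs{h_N}^2 - \abs{h_T}^2) - 2i\,\re(h_N\overline{h_T})\bigr]$. Equating this to the real constant $c$ supplied by~\eqref{hopf1} yields the two identities $\re(h_N\overline{h_T}) = 0$ and $\tfrac{\rho^2}{4}(\abs{h_N}^2 - \abs{h_T}^2) = c$. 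The first forces $h_N\overline{h_T}$ to be purely imaginary, and since $J_h = \im(\overline{h_N} h_T) = -\im(h_N\overline{h_T}) \ge 0$ for a deformation, we obtain $\abs{h_N}\,\abs{h_T} = \abs{h_N\overline{h_T}} = J_h$ a.e.

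The inequalities~\eqref{important} then drop out from the sign of $c$: if $c \le 0$, the second identity gives $\abs{h_N} \le \abs{h_T}$, whence $\abs{h_N}^2 \le \abs{h_N}\,\abs{h_T} = J_h$; the case $c \ge 0$ is symmetric and delivers $\abs{h_T}^2 \le J_h$. The only step requiring any care is the first: one must check that the general ``$\varphi\,dz^2$ real on each boundary curve'' assertion from the cited preliminaries really does translate to $z^2\varphi(z) \in \R$ on $\partial\Omega$, and that $\im g$ is bounded (hence subject to the maximum principle). Both are immediate from the smoothness of $\partial A(r,R)$ and the continuous boundary extension of $\varphi$, so no genuine obstacle arises; everything after this is elementary polar-coordinate algebra.
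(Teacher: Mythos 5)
Your proposal is correct and follows essentially the same route as the paper: the cited stationarity facts give that $\varphi=h_z\overline{h_{\bar z}}$ is holomorphic with $\varphi\,dz^2$ real on the boundary circles, whence $z^2\varphi\equiv c\in\R$ by the maximum principle, and the polar-coordinate identities $\abs{h_N}^2-\abs{h_T}^2=4c/\abs{z}^2$, $\re(\overline{h_N}h_T)=0$, together with $J_h=\im(\overline{h_N}h_T)\ge 0$, give $J_h=\abs{h_N}\abs{h_T}$ and hence \eqref{important}. This matches the paper's argument in \S\ref{hopsec} step for step.
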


\begin{proof} The validity of~\eqref{hopf1} with some $c\in\R$ was already recognized in~\eqref{above}.
Separating the real and imaginary parts in~\eqref{hopf1} we arrive at two equations
\begin{align}
\abs{h_N}^2 - \abs{h_T}^2 &= \frac{4c}{\abs{z}^2}; \label{hopf2a}\\
\re (\overline{h_N} h_T) &=0. \label{hopf2b}
\end{align}
Recall that $J_h = \im \overline{h_N} h_T\ge 0$, which in view of~\eqref{hopf2b} reads as
\begin{equation}\label{hopf5}
J_h = \abs{h_N} \abs{h_T}
\end{equation}
Combining this with~\eqref{hopf2a} the claim~\eqref{important} follows.
\end{proof}

Lemma~\ref{ctheory} together with Propositions~\ref{rwrho} and~\ref{rwthe} give the following corollary.

\begin{corollary}\label{cpositive} Under the hypotheses of Lemma~\ref{ctheory}, we have
\begin{itemize}
\item if $\Mod\Omega<\Mod\Omega^*$, then $c>0$
\item if $\Mod\Omega>\Mod\Omega^*$ and $\Omega^*$ is bounded by
rectifiable Jordan curves, then $c<0$.
\end{itemize}
\end{corollary}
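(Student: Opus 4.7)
The plan is to argue by contradiction in each case, combining the sign-dependent pointwise inequalities~\eqref{important} from Lemma~\ref{ctheory} with the appropriate Reich--Walczak-type bound. The single computational fact I will use over and over is that on the annulus $\Omega = A(r,R)$,
\[
\int_\Omega \frac{dz}{\abs{z}^2} \;=\; 2\pi \log \frac{R}{r} \;=\; 2\pi \Mod \Omega,
\]
so whenever the normal or tangential distortion of $h$ is bounded pointwise by $1$, the corresponding Reich--Walczak estimate collapses to a direct inequality between $\Mod \Omega$ and $\Mod \Omega^*$.

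For the first bullet, I would suppose toward contradiction that $c\le 0$. Then the relevant half of~\eqref{important} gives $\abs{h_N}^2\le J_h$ a.e.\ in $\Omega$, and with the stated convention at points where the numerator or denominator vanishes this means $K_N^h\le 1$ a.e. Plugging this into Proposition~\ref{rwrho} yields
\[
2\pi \Mod \Omega^*\;\le\;\int_\Omega K_N^h\,\frac{dz}{\abs{z}^2}\;\le\;\int_\Omega \frac{dz}{\abs{z}^2}\;=\;2\pi\Mod \Omega,
\]
contradicting the strict inequality $\Mod \Omega <\Mod \Omega^*$. Hence $c>0$.

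The second bullet is parallel: assuming $\Mod \Omega >\Mod \Omega^*$ with $\partial \Omega^*$ rectifiable, I would suppose $c\ge 0$ toward contradiction. The other half of~\eqref{important} then gives $\abs{h_T}^2\le J_h$, hence $K_T^h\le 1$ a.e. Now Proposition~\ref{rwthe} applies (the rectifiability of $\partial \Omega^*$ is exactly the hypothesis needed to invoke it for a deformation rather than a homeomorphism) and yields
\[
2\pi\,\frac{(\Mod \Omega)^2}{\Mod \Omega^*}\;\le\;\int_\Omega K_T^h\,\frac{dz}{\abs{z}^2}\;\le\;2\pi\Mod \Omega,
\]
which rearranges to $\Mod \Omega \le \Mod \Omega^*$, contrary to hypothesis. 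Hence $c<0$.

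I do not anticipate any genuine obstacle here. All of the analytic effort has already been spent establishing the Reich--Walczak inequalities in the general setting of deformations; the role of this corollary is only to observe that the sign of the Hopf constant $c$ is precisely what selects which of the two distortion bounds is available, and then to read off the contradiction with the modulus hypothesis. The one place deserving a moment's thought is the rectifiability clause in the second bullet, which cannot be dropped at this stage because Proposition~\ref{rwthe} for deformations requires it.
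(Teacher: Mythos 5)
Your argument is correct and is precisely the route the paper intends: it states the corollary as an immediate consequence of Lemma~\ref{ctheory} (via the sign dichotomy in~\eqref{important}) combined with Propositions~\ref{rwrho} and~\ref{rwthe}, using $\int_\Omega \abs{z}^{-2}\,dz = 2\pi\Mod\Omega$ exactly as you do. Nothing further is needed.
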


\section{Monotonicity of the minimum energy function}\label{monsec}

Due to the conformal invariance of the Dirichlet integral and of the class of deformations (Lemma~\ref{KM}), the minimal energy level
$\EE(\Omega,\Omega^*)$, defined by~\eqref{en1}, depends only on the conformal type of $\Omega$ as long
as $\Omega$ is bounded and $\Omega^\ast$ is fixed.
This leads us to consider a one-parameter family of extremal problems for homeomorphisms
$A(\tau)\onto\Omega^*$ of annuli $A(\tau)=A(1,e^{\tau})$, $0<\tau<\infty$.
In this section we are concerned with the quantity $\EE(\tau,\Omega^*):=\EE(A(\tau),\Omega^*)$ as a function of $\tau$, called the {\it minimum energy function}. When $\Omega^*$ has finite conformal modulus, $\EE(\tau,\Omega^*)$ attains its minimum at $\tau=\Mod\Omega^*$. Indeed, by~\eqref{ener2} for every $\tau$ we have $\EE (\tau, \Omega^\ast) \ge 2 \abs{\Omega^\ast}$, with equality if and only if $\Omega$ and $\Omega^\ast$ are conformally equivalent; that is, for $\tau= \Mod \Omega = \Mod \Omega^\ast$. The following monotonicity result, which extends this observation, will be of crucial importance
in the proof of Theorem~\ref{q4}.

\begin{proposition}\label{q3}
Let $\Omega^*$ be a bounded doubly connected domain.
The function $\tau\mapsto \EE(\tau,\Omega^*)$ is strictly decreasing
for $0<\tau<\Mod\Omega^*$. If, in addition, $\Omega^*$ is bounded by
rectifiable Jordan curves, then
$\EE(\tau,\Omega^*)$ is strictly increasing for $\tau>\Mod\Omega^*$.
\end{proposition}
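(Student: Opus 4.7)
The plan is to use a one-parameter family of test deformations obtained by radial power stretches, read off the sign of the resulting variation of energy from the Hopf identity (Lemma~\ref{ctheory}) together with Corollary~\ref{cpositive}, and then upgrade the resulting local strict-decrease statement to strict monotonicity via continuity of $\tau\mapsto\EE(\tau,\Omega^*)$.

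Fix $\tau_0>0$ and let $h_0$ be an energy-minimal deformation on $A(\tau_0)$, provided by Corollary~\ref{attain}. For each $\tau>0$, the radial stretch $\phi_\tau(\rho e^{i\theta}) = \rho^{\tau_0/\tau}e^{i\theta}$ is a diffeomorphism of $A(\tau)$ onto $A(\tau_0)$, so by Lemma~\ref{KM} the composition $h_0\circ\phi_\tau$ lies in $\D(A(\tau),\Omega^*)$. A direct calculation of the normal and tangential derivatives (\S\ref{reisec}) of $h_0\circ\phi_\tau$, followed by the substitution $w=\phi_\tau(z)$, yields
\begin{equation*}
\E[h_0\circ\phi_\tau] \;=\; \frac{\tau_0}{\tau}\,I_N \;+\; \frac{\tau}{\tau_0}\,I_T,\qquad
I_N := \int_{A(\tau_0)}|h_{0,N}|^2,\ \ I_T := \int_{A(\tau_0)}|h_{0,T}|^2.
\end{equation*}
Rearranging gives $\E[h_0\circ\phi_\tau] - \E[h_0] = (\tau-\tau_0)\bigl(I_T/\tau_0 - I_N/\tau\bigr)$.

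The sign of $I_N-I_T$ is supplied by the Hopf identity. Since $h_0$ is stationary, Lemma~\ref{ctheory} gives $|h_{0,N}|^2 - |h_{0,T}|^2 = 4c/|z|^2$, whose integration over $A(\tau_0)$ in polar coordinates yields $I_N - I_T = 8\pi c\tau_0$. When $\tau_0<\Mod\Omega^*$, Corollary~\ref{cpositive} forces $c>0$, hence $I_N>I_T$, and the factor $I_T/\tau_0 - I_N/\tau$ is strictly negative on the right-neighborhood $(\tau_0,\tau_0 I_N/I_T)$ of $\tau_0$. Thus $\EE(\tau,\Omega^*)\le\E[h_0\circ\phi_\tau] < \EE(\tau_0,\Omega^*)$ for every such $\tau$. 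Symmetrically, when $\tau_0>\Mod\Omega^*$ and $\partial\Omega^*$ is rectifiable, Corollary~\ref{cpositive} gives $c<0$ and $I_T>I_N$, and the same reasoning produces $\EE(\tau,\Omega^*)<\EE(\tau_0,\Omega^*)$ on the left-neighborhood $(\tau_0 I_N/I_T,\tau_0)$.

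The concluding step promotes these local statements to the claimed global strict monotonicity on $(0,\Mod\Omega^*)$ and on $(\Mod\Omega^*,\infty)$. Continuity of $\tau\mapsto\EE(\tau,\Omega^*)$ follows from the same construction: upper semicontinuity is immediate from $\E[h_0\circ\phi_\tau]\to\EE(\tau_0,\Omega^*)$ as $\tau\to\tau_0$, while lower semicontinuity comes from stretching minimizers $h_n$ on $A(\tau_n)$ into $A(\tau_0)$, whose energies differ from $\EE(\tau_n,\Omega^*)$ by a factor tending to $1$. With continuity in hand, a standard extremum argument on any compact subinterval $[\tau_a,\tau_b]$ converts the one-sided local strict decrease into global strict monotonicity: any supposed counterexample places the extremum of $\EE$ at an interior point (or at the "wrong" endpoint), where the local decrease supplied above produces a nearby point of smaller energy, a contradiction. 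I expect the main obstacle to be precisely this bridge from the local Hopf-based variation to global monotonicity, for which continuity of the minimum energy function is essential.
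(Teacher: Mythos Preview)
Your proof is correct and follows essentially the same approach as the paper: the radial power-stretch majorant $\EE(\tau,\Omega^*)\le \frac{\tau_0}{\tau}I_N+\frac{\tau}{\tau_0}I_T$ (Lemma~\ref{majorant}), the Hopf identity $I_N-I_T=8\pi c\,\tau_0$ from Lemma~\ref{ctheory}, and the sign of $c$ from Corollary~\ref{cpositive}. The only difference is in the finishing step: the paper observes that the same majorant makes $\tau\mapsto\EE(\tau,\Omega^*)$ locally Lipschitz (Corollary~\ref{econt}), computes its a.e.\ derivative to be $-8\pi c$, and concludes via absolute continuity, whereas you use continuity plus a minimum argument---which is valid, since if the minimum of $\EE$ on $[\tau_a,\tau_b]\subset(0,\Mod\Omega^*)$ were attained at any $\tau^*<\tau_b$, your right-local strict decrease at $\tau^*$ would immediately contradict minimality, forcing $\EE(\tau_b)<\EE(\tau_a)$.
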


The proof of Proposition~\ref{q3} requires auxiliary results concerning the normal and tangential  energies
\[\E_N[h]=\int_{\Omega}\abs{h_N}^2,\qquad  \E_T[h]=\int_{\Omega}\abs{h_T}^2.\]
Clearly $\E[h]=\E_N[h]+\E_T[h]$. Unlike $\E[h]$, both quantities $\E_N[h]$ and $\E_T[h]$
transform in a straighforward way under composition with the power stretch mapping
\begin{equation}\label{stret0}
\psi_{\alpha}(z):=\abs{z}^{\alpha-1}z, \qquad 0<\alpha<\infty.
\end{equation}
Specifically,
\begin{equation}\label{stret}
\E_N[h\circ \psi]= \alpha\, \E_N[h], \qquad \E_T[h\circ \psi]= \frac{1}{\alpha} \,\E_T[h].
\end{equation}
The direct verification of~\eqref{stret} is left to the reader. We only note that the domain of definition
of $h$ here is irrelevant as the computation is local.

\begin{lemma}\label{majorant}
Let $\Omega^*$ be a bounded doubly connected domain, $\tau_\circ\in (0,\infty)$.
Suppose that $h^\circ\in \D(A(\tau_\circ),\Omega^*)$ is an energy-minimal deformation.
Then for all $0<\tau<\infty$ we have
\begin{equation}
\EE(\tau,\Omega^*) \le \frac{\tau_\circ}{\tau}\,\E_N[h^\circ] + \frac{\tau}{\tau_\circ}\,\E_T[h^\circ].
\end{equation}
\end{lemma}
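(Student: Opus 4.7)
The strategy is to build an explicit competitor in $\D(A(\tau),\Omega^*)$ by precomposing the given minimizer $h^\circ$ with a radial power stretch that conformally rescales $A(\tau)$ onto $A(\tau_\circ)$, and then read off the resulting energy from the transformation rule~\eqref{stret}.

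Set $\alpha=\tau_\circ/\tau$ and consider the power stretch $\psi_\alpha(z)=\abs{z}^{\alpha-1}z$ from~\eqref{stret0}. In polar coordinates $\psi_\alpha(\rho e^{i\theta})=\rho^{\alpha}e^{i\theta}$, so $\psi_\alpha$ is a smooth diffeomorphism of $A(\tau)=A(1,e^{\tau})$ onto $A(\tau_\circ)=A(1,e^{\tau_\circ})$, since $e^{\alpha\tau}=e^{\tau_\circ}$. Being smooth with nonvanishing Jacobian on a relatively compact subannulus and uniformly bounded distortion up to the boundary, $\psi_\alpha$ is quasiconformal from $A(\tau)$ onto $A(\tau_\circ)$.

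Define the candidate $h:=h^\circ\circ\psi_\alpha\colon A(\tau)\to\overline{\Omega^*}$. By Lemma~\ref{KM}, $h\in \D(A(\tau),\Omega^*)$, so it is admissible for the minimization problem defining $\EE(\tau,\Omega^*)$. Hence
\[
\EE(\tau,\Omega^*)\le \E[h]=\E_N[h^\circ\circ\psi_\alpha]+\E_T[h^\circ\circ\psi_\alpha].
\]
Applying the transformation identities~\eqref{stret} with parameter $\alpha=\tau_\circ/\tau$ gives
\[
\E_N[h^\circ\circ\psi_\alpha]=\alpha\,\E_N[h^\circ]=\frac{\tau_\circ}{\tau}\,\E_N[h^\circ],\qquad
\E_T[h^\circ\circ\psi_\alpha]=\frac{1}{\alpha}\,\E_T[h^\circ]=\frac{\tau}{\tau_\circ}\,\E_T[h^\circ],
\]
and summing yields the claimed bound.

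There is essentially no obstacle here: the only point that requires any care is verifying that $\psi_\alpha$ falls under the hypothesis of Lemma~\ref{KM}, which is immediate from the explicit formula, and that formula~\eqref{stret} applies to $h^\circ$, which it does because the computation underlying~\eqref{stret} is purely local and the composition takes place between annuli on which $\psi_\alpha$ is a smooth bijection. Note that the hypothesis that $h^\circ$ be \emph{energy-minimal} is not actually used in the bound; the lemma holds for any $h^\circ\in\D(A(\tau_\circ),\Omega^*)$, which is how it will be used in subsequent sections to relate $\EE(\tau,\Omega^*)$ and $\EE(\tau_\circ,\Omega^*)$ through the minimizer at $\tau_\circ$.
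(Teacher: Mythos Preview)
Your proof is correct and follows essentially the same approach as the paper: precompose $h^\circ$ with the power stretch $\psi_\alpha$, $\alpha=\tau_\circ/\tau$, invoke Lemma~\ref{KM} to ensure the composition is a deformation, and apply~\eqref{stret}. Your observation that the energy-minimality of $h^\circ$ is not actually used is also accurate.
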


\begin{proof} Let $\alpha=\frac{\tau_\circ}{\tau}$ and note that $\psi_\alpha$ defined by~\eqref{stret0} is a quasiconformal mapping of  $A(\tau) $ onto $A(\tau_\circ)$.
By Lemma~\ref{KM} the composition $h^\circ \circ \psi_\alpha$ belongs to  $\D(A(\tau),\Omega^*)$
and by~\eqref{stret} we have
\[
\EE(\tau,\Omega^*) \le \E[h^\circ \circ \psi_\alpha] =
\alpha \,\E_N[h^\circ] + \frac{1}{\alpha}\, \E_T[h^\circ] . \qedhere
\]
\end{proof}

Let us apply Lemma~\ref{majorant} with $\tau_\circ = \Mod\Omega^*$. In this case $h^\circ \colon \Omega \onto \Omega^\ast$ is conformal so $\E_N[h^\circ]=\E_T[h^\circ]=\abs{\Omega^\ast}$. We obtain a simple
upper bound for the  minimal energy function,
\begin{equation}\label{upper}
\EE(\tau,\Omega^*) \le \left(\frac{\Mod\Omega^*}{\tau} + \frac{\tau}{\Mod\Omega^*}\right)
\abs{\Omega^*}, \quad 0< \tau < \infty.
\end{equation}

\begin{corollary}\label{econt}
The function $\EE(\tau,\Omega^*) $ is locally Lipschitz
for $0<\tau<\infty$.
\end{corollary}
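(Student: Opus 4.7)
The plan is to exploit Lemma~\ref{majorant} as producing a smooth majorant that is tangent to the graph of $\tau \mapsto \EE(\tau,\Omega^*)$ at any prescribed point. Fix $\tau_\circ \in (0,\infty)$. Since $A(\tau_\circ) = A(1, e^{\tau_\circ})$ is doubly connected with two genuine circles as boundary components (no degeneration into points), Corollary~\ref{attain} furnishes an energy-minimal deformation $h^\circ \in \D(A(\tau_\circ),\Omega^*)$, so that
\[
\EE(\tau_\circ,\Omega^*) = \E[h^\circ] = \E_N[h^\circ] + \E_T[h^\circ].
\]
Lemma~\ref{majorant} then gives, for every $\tau > 0$,
\[
\EE(\tau,\Omega^*) \le \frac{\tau_\circ}{\tau}\,\E_N[h^\circ] + \frac{\tau}{\tau_\circ}\,\E_T[h^\circ],
\]
with equality at $\tau = \tau_\circ$.

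Next, fix an arbitrary compact interval $[a,b] \subset (0,\infty)$. By the upper bound~\eqref{upper} there is a constant $C = C(a,b,\Omega^*)$ with $\EE(\tau,\Omega^*) \le C$ for all $\tau \in [a,b]$. Because $\E_N[h^\circ]$ and $\E_T[h^\circ]$ are nonnegative and sum to $\EE(\tau_\circ,\Omega^*) \le C$, each of them is individually bounded by $C$. For $\tau_1, \tau_2 \in [a,b]$, apply the tangent-majorant estimate with $\tau_\circ = \tau_1$ and subtract $\EE(\tau_1,\Omega^*)$ from both sides:
\[
\EE(\tau_2,\Omega^*) - \EE(\tau_1,\Omega^*) \le \left(\frac{\tau_1}{\tau_2}-1\right)\E_N[h^\circ] + \left(\frac{\tau_2}{\tau_1}-1\right)\E_T[h^\circ].
\]
Since $\bigl|\tfrac{\tau_1}{\tau_2}-1\bigr|$ and $\bigl|\tfrac{\tau_2}{\tau_1}-1\bigr|$ are each at most $|\tau_2-\tau_1|/a$, this yields
\[
\EE(\tau_2,\Omega^*) - \EE(\tau_1,\Omega^*) \le \frac{2C}{a}\,|\tau_2 - \tau_1|.
\]
Swapping the roles of $\tau_1$ and $\tau_2$ produces the matching reverse inequality, so $\EE(\cdot,\Omega^*)$ is Lipschitz on $[a,b]$ with constant $2C/a$. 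As $[a,b]$ was arbitrary, local Lipschitz continuity follows.

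There is essentially no serious obstacle: the whole proof is a one-sided estimate drawn from a tangent upper bound, followed by a symmetric swap to convert it into a two-sided estimate. The only point deserving mild attention is the invocation of Corollary~\ref{attain}, and that is immediate since every $A(\tau_\circ)$ with $\tau_\circ>0$ is a genuine annulus with non-degenerate boundary components.
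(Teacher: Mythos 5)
Your argument is correct and is essentially the paper's own proof: the one-sided estimate you obtain from Lemma~\ref{majorant} with the minimizer taken at $\tau_1$ is exactly the paper's inequality~\eqref{econt1}, and the paper likewise concludes by bounding the coefficients and symmetrizing in the two points. The only detail worth flagging is your appeal to~\eqref{upper} for the uniform constant $C$: that bound is vacuous when $\Mod\Omega^*=\infty$ (punctured target), a case the corollary still needs to cover since Proposition~\ref{q3} is invoked for such targets in the proof of Theorem~\ref{q4}; this is repaired within your own framework by noting that Lemma~\ref{majorant} applied at the single point $\tau_\circ=a$ gives $\EE(\tau,\Omega^*)\le \tfrac{b}{a}\,\EE(a,\Omega^*)<\infty$ for all $\tau\in[a,b]$, which serves as the constant $C$.
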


Indeed the existence of $h^\circ$ in Lemma~\ref{majorant}  is assured by Corollary~\ref{attain}. From Lemma~\ref{majorant} for arbitrary $0 < \tau_\circ, \tau < \infty$ we have
\begin{equation}\label{econt1}
\begin{split}
\EE(\tau,\Omega^*) -\EE(\tau_\circ,\Omega^*)
& \le  \frac{\tau_\circ}{\tau} \E_N[h^\circ] + \frac{\tau}{\tau_\circ} \E_T[h^\circ] -  \E_N[h^\circ] - \E_T[h^\circ]\\
&=  (\tau-\tau_\circ)\left\{\frac{\E_T[h^\circ]}{\tau_\circ}-\frac{\E_N[h^\circ]}{\tau}\right\}
\end{split}
\end{equation}
from where the local Lipschitz property is readily seen.

\begin{proof}[Proof of Proposition~\ref{q3}]
Since $\EE(\tau,\Omega^*)$ is locally Lipschitz, its derivative
exists for almost every $\tau \in(0,\infty)$. Fix such a point of differentiablity, say  $0<\tau_\circ<\Mod\Omega^*$. Let $h^\circ \in \D(A(\tau_\circ),\Omega^*)$ be an energy-minimal
deformation. By Lemma~\ref{ctheory}
\[
\abs{h^\circ_N}^2 = \abs{h^\circ_T}^2 + \frac{4c}{\abs{z}^2}, \quad \mbox{hence upon integration } \E_N[h^\circ]=\E_T[h^\circ]+8c \pi \tau_\circ.
\]
Now, for any $\tau \in (0,\infty)$ the estimate~\eqref{econt1} takes the form
\begin{equation}
\EE(\tau,\Omega^*)-\EE(\tau_\circ,\Omega^*)
\le (\tau-\tau_\circ) \left\{-8 c\pi + (\tau_\circ^{-1}-\tau^{-1}) \E_N[h^\circ]\right\}
\end{equation}
Therefore
\[\frac{d}{dt}\bigg|_{\tau=\tau_\circ} \EE (\tau, \Omega^\ast)= -8 \pi c.\]
Corollary~\ref{cpositive} completes the proof.
\end{proof}

\section{Existence: Theorem~\ref{q4}}\label{exisec}

\begin{proposition}\label{gtheory}
Let $\Omega$ and $\Omega^*$ be bounded doubly connected domains.
Suppose that $h\in \D(\Omega,\Omega^*)$ satisfies $\E[h]=\EE(\Omega,\Omega^*)$.
Let $G=\{z\in\Omega\colon h(z)\in\Omega^*\}$. Then $G$ is a doubly connected domain that separates the boundary components
of $\Omega$. The restriction of $h$ to $G$ is a harmonic diffeomorphism onto $\Omega^*$.
\end{proposition}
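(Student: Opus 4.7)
The plan is to exploit energy-minimality through the Modification Lemma (Lemma~\ref{harmrep}) and then promote the resulting local information to a global diffeomorphism via a covering-space argument powered by the topological monotonicity of deformations.

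For each point $z_\circ \in G$ I would choose a small open disk $D$ about $h(z_\circ)$ with $\overline{D} \subset \Omega^*$, taken small enough that $\Omega^* \setminus \overline{D}$ is still connected. By Lemma~\ref{mono}, $U := h^{-1}(D)$ is connected, and by Proposition~\ref{trans} we have $h^{-1}(\overline D) \Subset \Omega$. Simple connectivity of $U$ comes from the monotone spherical extension $\bow h$: the complement $\s^2 \setminus \chi^\ast(D) = \chi^\ast(\Omega^* \setminus D) \cup P$ is connected since $\Omega^* \setminus D$ is connected and the two punctures $P$ lie in its spherical closure, so Proposition~\ref{why} gives connectedness of the preimage $\s^2 \setminus \chi(U) = \bow h^{-1}(\s^2 \setminus \chi^\ast(D))$, whence $\chi(U)$, and therefore $U$, is simply connected. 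Lemma~\ref{harmrep} now produces the Poisson modification $g = \p_U h \in \D(\Omega,\Omega^*)$ with $g|_U$ a harmonic diffeomorphism onto $D$ and $\E[g] \le \E[h]$. Energy-minimality forces $\E[g] = \E[h]$, so part (iii) of that lemma yields $g \equiv h$. Thus $h$ is a harmonic diffeomorphism of $U$ onto $D$, and covering $G$ by such disks shows that $h|_G$ is harmonic throughout $G$ and is a local diffeomorphism onto $\Omega^*$.

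To upgrade to global injectivity, I observe that $h|_G \colon G \to \Omega^*$ is a local homeomorphism, surjective by Lemma~\ref{surjective}, and proper: for every compact $K \subset \Omega^*$, Proposition~\ref{trans} forces $h^{-1}(K) \Subset G$. A proper local homeomorphism onto a connected target is a covering map; $G$ is connected by Lemma~\ref{mono}, and Lemma~\ref{multilemma} gives $N_\Omega(h,w) = 1$ for a.e. $w \in \Omega^*$, so the covering has degree one and $h|_G$ is a homeomorphism, and therefore a diffeomorphism, of $G$ onto $\Omega^*$. Finally, Lemma~\ref{goodset} tells us $G$ is a domain separating the two boundary components of $\Omega$, and being homeomorphic to the doubly connected $\Omega^*$, the domain $G$ is itself doubly connected.

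The principal technical obstacle is the first step: justifying the Modification Lemma in the absence of any a priori hypothesis that $h(\Omega)=\Omega^*$. The resolution is local---one invokes Lemma~\ref{harmrep} on each $U \Subset \Omega$ individually---and the crux is the simple connectivity of $U$, which rests on combining monotonicity of $\bow h$ with the choice of $D$ small enough that $\Omega^* \setminus \overline D$ is still connected.
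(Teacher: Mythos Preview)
Your argument is correct. The local step---using the Modification Lemma to force $h$ to be a harmonic diffeomorphism near each point of $G$---is exactly the paper's approach, though you supply considerably more detail: the paper simply writes ``otherwise we would be able to find a deformation with strictly smaller energy by means of Lemma~\ref{harmrep}'' without verifying the simple connectivity of $U$ or addressing the hypothesis $h(\Omega)=\Omega^*$ in that lemma. Your care with $\bow h$ and the complement argument on $\mathbb S^2$ fills that in nicely.

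Where you genuinely diverge is in promoting the local diffeomorphism to a global one. You invoke properness, the covering-space classification, and the multiplicity bound of Lemma~\ref{multilemma} to pin the degree at one. The paper instead observes that for each $w\in\Omega^*$ the fibre $h^{-1}(w)$ is connected (Lemma~\ref{mono}) and, being the fibre of a local diffeomorphism, also discrete---hence a single point. This one-line argument avoids properness altogether and does not need Proposition~\ref{trans} (which, incidentally, carries the extra hypothesis that no boundary component of $\Omega$ degenerates; the compactness of $h^{-1}(\overline D)$ you need actually follows directly from the continuity of $\delta_h$ on $\overline\Omega$, built into Definition~\ref{defdef}). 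Your route is sound but heavier; the paper's is the shorter path once monotonicity is in hand.
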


\begin{proof} The fact that $G$ is a domain separating the boundary components of $\Omega$ was established in Lemma~\ref{goodset}.
Each point $z\in G$ has a neighborhood in which $h$ is a harmonic diffeomorphism.
Indeed, otherwise we would be able to find a deformation with strictly smaller energy by means of Lemma~\ref{harmrep}.
Thus, $h\colon G\onto\Omega^\ast$ is a local diffeomorphism.
On the other hand, for each $w\in \Omega^\ast$ the preimage $h^{-1}(w)$ is connected by Lemma~\ref{mono}. It follows that  $h\colon G\onto\Omega^*$ is a diffeomorphism.
Being a diffeomorphic image of $\Omega^*$, the domain $G$ must be doubly connected.
\end{proof}

\begin{proof}[Proof of Theorem~\ref{q4}]
If
$\Mod\Omega=\Mod\Omega^*$, then the domains are conformally equivalent. As observed in \S\ref{intsec}, a conformal mapping minimizes the Dirichlet energy. Thus we only need to consider the case
$\Mod\Omega<\Mod\Omega^*$. In particular $\Mod \Omega < \infty$.

Let $h$ and $G$ be as in Proposition~\ref{gtheory}. The existence of such $h$ is guaranteed by Corollary \ref{attain}. Since $G$ separates the boundary components of $\Omega$,
we have $\Mod G\le \Mod\Omega$ with equality if and only if $G=\Omega$~\cite[Lemma~6.3]{LVb}.
If $\Mod G< \Mod\Omega$, then by Proposition~\ref{q3}
\[
\int_G \abs{Dh}^2 \ge \EE(\Mod G,\Omega^*) > \EE(\Mod \Omega,\Omega^*) = \int_\Omega \abs{Dh}^2
\]
which is absurd. Thus $G=\Omega$.
By Proposition~\ref{gtheory} the mapping $h\colon \Omega\to\Omega^*$ is a harmonic diffeomorphism. The uniqueness statement will follow from Proposition~\ref{needed}.
\end{proof}
\begin{proof}[Proof of Theorem~\ref{thmintdist}]
Suppose $\Mod \Omega \le \Mod \Omega^\ast$ and let $f_\circ \colon \Omega \onto \Omega^\ast$ be an energy-minimal diffeomorphism provided to us by Theorem~\ref{q4}. For every homeomorphism $g \colon \Omega^\ast \onto \Omega$ with integrable distortion the inverse map $f=g^{-1} \colon \Omega \onto \Omega^\ast$ belongs to the Sobolev class $W^{1,2}(\Omega)$ and we have~\cite{HK, HKO, IS}
\begin{equation}\label{blah}
\int_{\Omega^\ast} K_g(w)\, d w = \int_\Omega \abs{Df(z)}^2 \, dz \ge \int_\Omega \abs{Df_\circ (z)}^2\, dz=  \int_{\Omega^\ast} K_{g_\circ} (w)\, dw
\end{equation}
where $g_\circ=f_\circ^{-1}$. The latter identity is legitimate because $f_\circ$ is a diffeomorphism. Thus $g_\circ$ is also a $C^\infty$-smooth diffeomorphism. It has the smallest possible $L^1$-norm of the distortion. If equality holds in~\eqref{blah} then, by Theorem~\ref{q4}, the mapping $f_\circ^{-1} \circ f$ is conformal.
\end{proof}

\section{Nonexistence: Theorem~\ref{emexist}}\label{nonsec}

Before proceeding to the proof of  Theorem~\ref{emexist} we recollect basic facts of potential theory in the plane which can be found in~\cite{Ranb}.
A domain $\Omega$ has Green's function $\G_{\Omega}$ whenever $\C\setminus \Omega$ contains
a nondegenerate continuum. Our normalization is $\G_{\Omega}(z,\zeta)=-\log\abs{z-\zeta}+O(1)$ as $z\to\zeta$.
In particular, $\G_{\Omega}(z,\zeta)>0$. Green's function for the unit disk $\DD$ is
\[ 
\G_{\DD}(z,\zeta)=\log \left|\frac{1-z \bar \zeta}{z-\zeta}\right|.
\] 
If $f\colon \Omega\to\Omega^*$ is a holomorphic function, then the subordination principle holds:
\begin{equation}\label{gsub}
\G_{\Omega}(z,\zeta)\le \G_{\Omega^*}(f(z),f(\zeta)).
\end{equation}

\begin{proof}[Proof of Theorem~\ref{emexist}]
If $\Omega$ is degenerate, so is $\Omega^\ast$ because a point is  a removable singularity for $W^{1,2}$-homeomorphisms~\cite[Theorem 3.1]{IOt}.
Therefore we may assume, by a conformal change of variables in $\Omega$, that $\Omega=A(R^{-1},R)$, $R>1$. By Lemma~\ref{ctheory}
\begin{equation}\label{repeat}
h_z\overline{h_{\bar z}} \equiv \frac{c}{z^2}\qquad \text{in }\Omega.
\end{equation}
where $c$ is real. If $c\ge 0$, then~\eqref{important} yields $\abs{h_T}^2 \le J_h$, hence $\Mod \Omega^\ast \ge \Mod \Omega$ by Proposition~\ref{rwthe}.
It remains to consider the case $c<0$. Let us write $c=-b^2$, $b\in\R$.
Introduce the so-called second complex dilatation
\begin{equation}\label{rw2}
\nu =\frac{\overline{h_{\bar z}}}{h_z}
\end{equation}
which is a holomorphic function from $\Omega$ into the unit disk $\DD$~\cite[p. 5]{Dub}.
Equation~\eqref{repeat} implies that $\nu$ does not vanish and
\[
\nu = \frac{-b^2}{z^2 h_z^2}
\]
Therefore, $\nu$ has a single-valued square root, namely
\begin{equation}\label{rw4}
\omega=\frac{ib}{z h_z}.
\end{equation}
From~\eqref{rw2} and~\eqref{rw4} we have
\begin{equation}\label{rw5}
h_z = \frac{ib}{z\omega}\quad \text{ and } \quad h_{\bar z}=-\frac{ib\overline{\omega}}{\bar z}.
\end{equation}
Now we integrate the differential form $dh=h_z\,dz+h_{\bar z}\, d\bar z$ over the unit circle
\begin{equation}\label{rw6}
0 = \int_{\T}dh =ib \int_{\T}\left(\frac{dz}{ z\omega}-\frac{\overline{\omega }\,d\bar z}{\bar z}\right)
=ib \int_{\T}\left(\frac{1}{ \omega}+\overline{\omega }\right)\frac{dz}{z}.
\end{equation}
The image of $\T$ under the map $z\mapsto\frac{\omega}{\abs{\omega}}$ is an arc $\Gamma\subset \T$.
This arc cannot be contained in any open half-circle, for
then the values of the function $\omega^{-1}+\overline{\omega}=\overline{\omega}(1+\abs{\omega}^{-1})$ on $\T$ would lie in an open halfplane,
contradicting~\eqref{rw6}.
Thus there exist points $z_1,z_2\in \T$ such that
\begin{equation}\label{opp}
\frac{\omega(z_1)}{\omega(z_2)}<0.
\end{equation}
We write $w_j=\omega(z_j)$, $j=1,2$, and invoke a simple lower bound for the Green function of $\Omega$, derived in~\cite[(3.9)]{IKO4}:
\begin{equation}\label{gl}
\G_{\Omega}(z_1,z_2)\ge \log \coth\frac{\pi^2}{4\log R},\qquad z_1,z_2\in\T.
\end{equation}
By the subordination principle~\eqref{gsub},
\begin{equation}\label{sub}
\G_{\Omega}(z_1,z_2) \le \G_{\DD}(w_1,w_2).
\end{equation}
Because of symmetry we may assume $\abs{w_1}\le \abs{w_2}$.
The right hand side of~\eqref{sub} is estimated from above using~\eqref{opp}:
\begin{equation}\label{gb}
\G_{\DD}(w_1,w_2) = \log\frac{1+\abs{w_1w_2}}{\abs{w_1}+\abs{w_2}}
\le\log\frac{1+\abs{w_1}^2}{2\abs{w_1}}.
\end{equation}
Combining~\eqref{gl}--\eqref{gb} we obtain an upper bound for $\abs{\omega}$ on $\T$,
\begin{equation}\label{rw7}
\abs{\omega(z_1)} \le \tanh\frac{\pi^2}{8\log R}.
\end{equation}
Introduce an auxiliary mapping $g=\phi\circ h$, where $\phi$ is an affine transformation chosen so that $g$ becomes conformal at $z_1$; that is, $g_{\bar z} (z_1)=0$.
It was proved in~(\cite{IKO4}, estimates  (3.11) and (3.13)) that
\begin{equation*}
\Mod g(\Omega) \ge \Mod\Omega \cdot  \Lambda\left(\coth \frac{\pi^2}{2\tau}\right),\quad \ \Lambda(t)= \frac{\log t-\log(1+\log t)}{2+\log t}, \ t\ge 1.
\end{equation*}
From~\eqref{rw7} we have
\begin{equation*}
\Mod h(\Omega) \ge \frac{1-\abs{\omega(z_1)}}{1+\abs{\omega(z_1)}}\Mod g(\Omega)
=\exp\left(-\frac{\pi^2}{4\log R}\right)\Mod g(\Omega).
\end{equation*}
Combining the last two lines yields~\eqref{specific}.
\end{proof}

We complement Theorem~\ref{emexist} with an explicit example of two doubly connected domains  and a harmonic homeomorphism between them, which do not admit an energy-minimal homeomorphism.

\begin{example}\label{ellip} Consider the annulus $\Omega=A(1,R)$, $R>1$. Fix $0<\delta<1$ and let $\Omega^*$
be the image of the annulus $\A^\ast=A(1, \frac{1}{2} (R+R^{-1}))$
under the affine mapping $\phi(z) = z + \delta \bar z$.
Then there exists no energy-minimal diffeomorphism $h\colon \Omega\onto\Omega^*$, though
there exists a harmonic one.
\end{example}

\begin{proof} The annulus $\A^\ast$ is the image of $\Omega$ under the
\emph{extremal Nitsche mapping}
\[
h^*(z)=\frac{1}{2}\left(z+\frac{1}{\bar z}\right)
\]
which is not only harmonic but also energy-minimal in
$\Ho^{1,2}(\Omega,\A^\ast)$~\cite[Corollary 2]{AIM}. The uniqueness part of Theorem~1.1 in~\cite{IKO2} states that $h^*$ is the unique harmonic homeomorphism from $\Omega$ onto $\A^\ast$, up to a conformal automorphism of the annulus $\Omega$, rotation or/and inversion. It follows that $g:=\phi\circ h^*$ is the unique harmonic diffeomorphism of $\Omega$ onto $\Omega^*$, up
to a conformal automorphism of $\Omega$.
Thus if $\Ho^{1,2}(\Omega,\Omega^\ast)$ admitted an energy minimizer the mapping $g$ would be one of them.
Explicitly,
\[
g(z) = \frac{1}{2}\left(z+\frac{\delta}{z}+\delta \bar z+\frac{1}{\bar z} \right).
\]
On the other hand, the Hopf differential of $g$ takes the form
\[
g_z \overline{g_{\bar z}} =\frac{1}{4}\left(1-\frac{\delta}{z^2}\right)\left(\delta-\frac{1}{z^2}\right) \not\equiv \frac{c}{z^2}.
\]
By Lemma~\ref{ctheory} we see that $g$ cannot be stationary in the annulus $\Omega$. Consequently, there is no  energy-minimal homeomorphism in $\Ho^{1,2}(\Omega,\Omega^*)$.
\end{proof}


\section{Convexity of the minimum energy function}\label{consec}

In \S\ref{monsec} we proved that for any bounded doubly connected domain $\Omega^*$
the function $\EE(\tau,\Omega^*)$ is decreasing for $0<\tau<\Mod\Omega^*$. The
minimum of this function is attained at  $\tau=\Mod\Omega^*$, i.e., in the case of conformal equivalence.
In this section we prove:

\begin{theorem}\label{convextheorem}
Let $\Omega^*$ be a bounded doubly connected domain.
The function $\tau\mapsto \EE(\tau,\Omega^*)$ is strictly convex for $0<\tau<\Mod\Omega^*$.
\end{theorem}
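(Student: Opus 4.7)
The plan rests on the strictly convex majorant from Lemma~\ref{majorant}: for every $\tau_\circ \in (0, \Mod\Omega^*)$ with energy-minimal diffeomorphism $h_\circ$ on $A(\tau_\circ)$ (existence and essential uniqueness from Theorem~\ref{q4}), the function
\[
F_{\tau_\circ}(\tau) := \frac{\tau_\circ N_\circ}{\tau} + \frac{\tau T_\circ}{\tau_\circ}, \qquad N_\circ := \E_N[h_\circ],\ T_\circ := \E_T[h_\circ],
\]
satisfies $F_{\tau_\circ}(\tau) \ge \EE(\tau,\Omega^*)$ for all $\tau>0$ with equality at $\tau=\tau_\circ$. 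Using $N_\circ+T_\circ=\EE(\tau_\circ)$ and $N_\circ-T_\circ=8\pi c(\tau_\circ)\tau_\circ=-\tau_\circ\EE'(\tau_\circ)$ (from Lemma~\ref{ctheory} and Proposition~\ref{q3}), a direct rearrangement yields the parabolic-type tangent expansion
\[
F_{\tau_\circ}(\tau) = \EE(\tau_\circ) + \EE'(\tau_\circ)(\tau-\tau_\circ) + \frac{N_\circ(\tau-\tau_\circ)^2}{\tau_\circ\,\tau}.
\]

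First I would upgrade the majorant to a \emph{strict} inequality whenever $\tau\ne\tau_\circ$. The bound $F_{\tau_\circ}(\tau)$ is realized by the stretched competitor $h_\circ\circ\psi_{\tau_\circ/\tau}\in\D(A(\tau),\Omega^*)$; if equality occurred for some $\tau\ne\tau_\circ$, this competitor would be energy-minimal on $A(\tau)$, and the uniqueness clause of Theorem~\ref{q4} would force the power stretch $\psi_{\tau_\circ/\tau}$ to differ from a conformal automorphism of $A(\tau)$ only by post-composition with $h_\tau^{-1}\circ h_\tau$, i.e., $\psi_{\tau_\circ/\tau}$ itself would be conformal---impossible unless $\tau_\circ=\tau$. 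Hence $F_{\tau_\circ}(\tau) > \EE(\tau)$ strictly for $\tau\ne\tau_\circ$.

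Next, by the envelope identity $\EE'(\tau)=-8\pi c(\tau)$ proved in Proposition~\ref{q3}, strict convexity of $\EE$ on $(0,\Mod\Omega^*)$ is equivalent to the strict monotonicity $c(\tau_1)>c(\tau_2)$ for $0<\tau_1<\tau_2<\Mod\Omega^*$. To establish this, take two such points with their minimizers $h_j$, Hopf constants $c_j$, energies $\EE_j$, and apply the strict majorant at each point:
\[
\EE(\tau_2) < F_{\tau_1}(\tau_2), \qquad \EE(\tau_1) < F_{\tau_2}(\tau_1).
\]
Substituting $N_j = \tfrac{1}{2}(\EE_j+8\pi c_j\tau_j)$, $T_j = \tfrac{1}{2}(\EE_j-8\pi c_j\tau_j)$ and simplifying (using $\tau_2^2-\tau_1^2=(\tau_2-\tau_1)(\tau_2+\tau_1)$), these two strict inequalities should combine to yield $(c(\tau_1)-c(\tau_2))(\tau_2-\tau_1)>0$, i.e., the desired strict monotonicity of $c$.

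The main obstacle is that naively adding the two strict majorants only controls the combination $c(\tau_1)\tau_1-c(\tau_2)\tau_2$ on one side, which is weaker than $c(\tau_1)>c(\tau_2)$. To close this gap, I would exploit continuity of $\tau\mapsto c(\tau)$---inherited from Corollary~\ref{econt} and the envelope identity---together with a second-order Taylor expansion of the strict majorant as $\tau_2\to\tau_1^+$. The tangent-plus-quadratic expansion of $F_{\tau_\circ}$ shows that the deficit $F_{\tau_1}(\tau_2)-\EE(\tau_2)$ is $O((\tau_2-\tau_1)^2)$ and its leading coefficient involves $c'(\tau_1)$; strictness of the majorant then forces $c'(\tau_1)<0$ in the distributional sense, which together with continuity yields the strict monotonicity of $c$ and hence strict convexity of $\EE$.
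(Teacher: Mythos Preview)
Your approach has a fundamental gap: you are trying to deduce strict convexity of $\EE$ from \emph{upper} bounds (the majorants $F_{\tau_\circ}\ge\EE$ of Lemma~\ref{majorant}), but touching majorants from above can never yield convexity. A function touched from above at every point by a strictly convex parabola may well be concave; for instance $\EE(\tau)=-\tau^2$ admits at each $\tau_\circ$ the strictly convex majorant $-\tau_\circ^2-2\tau_\circ(\tau-\tau_\circ)+M(\tau-\tau_\circ)^2$ for any $M>1$, with strict inequality off $\tau_\circ$.

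Your final paragraph makes the error explicit. With your tangent-plus-quadratic expansion and $\EE''=-8\pi c'$, the second-order deficit reads
\[
F_{\tau_\circ}(\tau)-\EE(\tau)\;\sim\;\Bigl(\frac{N_\circ}{\tau_\circ^{2}}+4\pi\,c'(\tau_\circ)\Bigr)(\tau-\tau_\circ)^{2},
\]
so nonnegativity of the deficit gives only $c'(\tau_\circ)\ge -N_\circ/(4\pi\tau_\circ^{2})$, a \emph{lower} bound on $c'$---the opposite sign to what you claim. Likewise, adding your two strict majorants at $\tau_1<\tau_2$ produces
\[
8\pi\,(c_1-c_2)(\tau_2-\tau_1)\;<\;\frac{(N_1+N_2)(\tau_2-\tau_1)^{2}}{\tau_1\tau_2},
\]
an \emph{upper} bound on $c_1-c_2$, not the positivity you need. (The continuity of $\tau\mapsto c(\tau)$ you invoke is also not yet available: Proposition~\ref{q3} gives $\EE'=-8\pi c$ only a.e., and in the paper $c$ is shown to be well-defined and continuous only \emph{after} convexity is established.)

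What is missing is a supporting line from \emph{below}. The paper supplies this via Proposition~\ref{needed}: using the Hopf equation~\eqref{hopf1again} together with the Reich--Walczak-type estimates of \S\ref{reisec} applied to the composition $f=g^{-1}\circ h$, one proves
\[
\E[g]-\E[h]\;\ge\;8\pi c\,(\tau_\circ-\tau)
\]
for every diffeomorphism $g\colon A(\tau)\onto\Omega^\ast$, with strict inequality when $h$ is a diffeomorphism and $\tau\ne\tau_\circ$. Taking $g$ to be the minimizer on $A(\tau)$ (Theorem~\ref{q4}) yields $\EE(\tau)>\EE(\tau_\circ)-8\pi c_\circ(\tau-\tau_\circ)$ for $\tau\ne\tau_\circ$, i.e., a strict supporting line at every point of $(0,\Mod\Omega^\ast)$, and strict convexity follows at once. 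Your Lemma~\ref{majorant} argument goes in the wrong direction and cannot be repaired without an ingredient of this kind.
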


The main part of the proof of this theorem needs to be  stated separately.
As a by-product it establishes the uniqueness part of Theorem~\ref{mainexist}.

\begin{proposition}\label{needed}
Let $\Omega^\ast$ be a bounded doubly connected domain.
Suppose that
$h\in \D(A(\tau_\circ),\Omega^\ast)$ is an energy-minimal deformation. In particular, by Lemma~\ref{ctheory},
\begin{equation}\label{hopf1again}
h_z\overline{h_{\bar z}} \equiv \frac{c}{z^2}\qquad \text{in }A(\tau_\circ).
\end{equation}
Then for any diffeomorphism $g\colon A(\tau) \to \Omega^\ast$ we have
\begin{equation}\label{claim10}
\E[g]-\E[h] \ge  8\pi c (\tau_\circ-\tau).
\end{equation}
If, in addition, $h$ is a diffeomorphism, then equality holds in~\eqref{claim10} if and only if $\tau=\tau_\circ$ and $g^{-1} \circ h$ is
a conformal mapping of $A(\tau_\circ)$ onto itself.
\end{proposition}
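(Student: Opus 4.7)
The plan is to reduce the comparison of $\E[g]$ on $A(\tau)$ and $\E[h]$ on $A(\tau_\circ)$ to a single-annulus problem via a radial power stretch, apply energy-minimality of $h$, and combine the resulting inequality with the integrated Hopf identity for $h$. Concretely, set $\alpha=\tau/\tau_\circ$ and let $\psi_\alpha(z)=|z|^{\alpha-1}z$, a quasiconformal homeomorphism of $A(\tau_\circ)$ onto $A(\tau)$. By Lemma~\ref{KM} the composition $g\circ\psi_\alpha$ belongs to $\D(A(\tau_\circ),\Omega^\ast)$, and the polar identities~\eqref{stret} give $\E[g\circ\psi_\alpha]=\alpha\E_N[g]+\alpha^{-1}\E_T[g]$. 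Energy-minimality of $h$ on $A(\tau_\circ)$ yields
\begin{equation*}
\E[h]\ \le\ \alpha\,\E_N[g]+\alpha^{-1}\,\E_T[g]. \qquad (\dag)
\end{equation*}
Separating real and imaginary parts in the Hopf equation~\eqref{hopf1again} gives $|h_N|^2-|h_T|^2=4c/|z|^2$ and $\re(\overline{h_N}h_T)=0$; integrating the first of these over $A(\tau_\circ)$ in polar coordinates yields the companion identity
\begin{equation*}
\E_N[h]-\E_T[h]\ =\ 8\pi c\,\tau_\circ. \qquad(\ddag)
\end{equation*}

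A direct rearrangement shows that~\eqref{claim10} is equivalent to the signed inequality $(\tau_\circ-\tau)\bigl[\E_N[g]/\tau_\circ-\E_T[g]/\tau-8\pi c\bigr]\ge 0$, which is not a formal consequence of $(\dag)$ and $(\ddag)$ alone. To close the argument, one must also use the structural information that $g$ is a diffeomorphism onto $\Omega^\ast$: the volume identities $\int_{A(\tau)} J_g=|\Omega^\ast|=\int_{A(\tau_\circ)} J_h$ together with the pointwise inequality $|g_N|\,|g_T|\ge J_g$ a.e., which upgrades to the pointwise \emph{equality} $|h_N|\,|h_T|=J_h$ for $h$ by virtue of $\re(\overline{h_N}h_T)=0$. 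The plan is to combine these facts with $(\dag)$ through a Cauchy--Schwarz estimate in the spirit of Propositions~\ref{rwrho}--\ref{rwthe} and then to optimize the resulting inequality over the one-parameter family of stretches $\{g\circ\psi_s\}_{s>0}$; this optimization is what converts the single-parameter bound $(\dag)$ into the affine-in-$\tau$ lower bound with slope $-8\pi c$ prescribed by Proposition~\ref{q3}. This combined Cauchy--Schwarz/optimization step is the main technical obstacle.

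For the equality statement, tracing through the argument one finds that equality in~\eqref{claim10} forces each intermediate Cauchy--Schwarz and AM--GM step to saturate. In particular it forces $|g_N|\,|g_T|=J_g$ a.e.\ and $g_z\overline{g_{\bar z}}\equiv c/z^2$, so $g$ itself is stationary with the same Hopf differential as $h$ and is energy-minimal on $A(\tau)$. Proposition~\ref{gtheory} then shows $g$ is a harmonic diffeomorphism onto $\Omega^\ast$, and the classical uniqueness of harmonic diffeomorphisms between doubly connected planar domains with prescribed Hopf quadratic differential forces $\tau=\tau_\circ$ and $g^{-1}\circ h$ to be a conformal self-map of $A(\tau_\circ)$, which yields the uniqueness statement in Theorem~\ref{mainexist}.
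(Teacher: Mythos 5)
Your proposal is not a proof: the step you yourself label ``the main technical obstacle'' is precisely the content of Proposition~\ref{needed}, and nothing in the proposal supplies it. The two facts you do establish, $(\dag)$ and $(\ddag)$, are correct but point in the wrong direction. Inequality $(\dag)$, obtained by stretching $g$ and invoking minimality of $h$, bounds $\E[h]$ from \emph{above}; it is exactly Lemma~\ref{majorant} combined with the integrated Hopf identity, i.e.\ the ingredient of Proposition~\ref{q3} showing that the minimum energy function lies \emph{below} a certain curve through $(\tau_\circ,\E[h])$ with slope $-8\pi c$. But~\eqref{claim10} is a convexity-type statement: $\E[g]$ must lie \emph{above} the tangent line with slope $-8\pi c$, uniformly over all diffeomorphisms $g\colon A(\tau)\onto\Omega^\ast$. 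As you note, this reduces to $\E_N[g]-\tfrac{\tau_\circ}{\tau}\E_T[g]\ge 8\pi c\,\tau_\circ$, which is a genuinely new assertion about an arbitrary $g$; the volume identity $\int J_g=\abs{\Omega^\ast}$, the pointwise bound $\abs{g_N}\abs{g_T}\ge J_g$, and Reich--Walczak inequalities applied to $g$ itself do not produce the constant $c$, which is tied to the minimizer $h$ (and only a posteriori, via this very proposition, to $\tau_\circ$ and $\Omega^\ast$). The mechanism the paper uses is different and is missing from your plan: one forms the transition map $f=g^{-1}\circ h\colon A(\tau_\circ)\to A(\tau)$, rewrites $\E[g]$ by the chain rule as an integral over $G=h^{-1}(\Omega^\ast)$, and uses the Hopf identity~\eqref{hopf1again} pointwise to get
\begin{equation*}
\E[g]-\E[h]\ \ge\ 4\abs{c}\int_{A(\tau_\circ)}\frac{\abs{f_z-\sigma f_{\bar z}}^2}{J_f}\,\frac{dz}{\abs{z}^2}\ -\ 8\pi\abs{c}\,\tau_\circ,\qquad \sigma=\frac{c\bar z}{\abs{c}z},
\end{equation*}
after which the integrand is recognized as $K_T^f$ (if $c>0$) or $K_N^f$ (if $c<0$) and Proposition~\ref{rwthe} or~\ref{rwrho} is applied to $f$, not to $g$. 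This is where the modulus of $A(\tau)$ enters and where the slope $8\pi c$ is produced; it also requires handling the set $A(\tau_\circ)\setminus G$ where $J_h=0$, and the easy case $c=0$, neither of which your outline addresses.

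The equality discussion has the same defect plus an external dependence. Even granting your main inequality, your claim that equality forces $g_z\overline{g_{\bar z}}\equiv c/z^2$ is not derived (the constant $c$ belongs to $h$ on $A(\tau_\circ)$, not to $g$ on $A(\tau)$), and the appeal to a ``classical uniqueness of harmonic diffeomorphisms with prescribed Hopf quadratic differential'' is outside the paper's toolkit and risks circularity, since the uniqueness in Theorem~\ref{mainexist} is what this proposition is meant to deliver. In the paper the equality case is immediate from the displayed chain: when $h$ is a diffeomorphism, $G=A(\tau_\circ)$ and equality forces $f_{\bar z}\equiv 0$, so $f=g^{-1}\circ h$ is conformal between $A(\tau_\circ)$ and $A(\tau)$, whence $\tau=\tau_\circ$.
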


\begin{proof} First we dispose of the easy case $c=0$. In this case $h$ is conformal, which implies
$\E[h]=2\abs{\Omega^*}$. On the other hand, $\E[g]\ge 2\abs{\Omega^*}$ with equality if and only if $g$ is conformal, see~\eqref{ener2}.

It remains to deal with $c\ne 0$. The composition
\[
f= g^{-1} \circ h \colon A(\tau_\circ) \onto A(\tau)
\]
lies in $W^{1,2}_{\rm loc} (A(\tau_\circ))$ and is not homotopic to a constant mapping.  Moreover, the restriction of $f$
to the domain $G:=\{z\in A(\tau_\circ)\colon h(z)\in \Omega^*\}$ is a harmonic diffeomorphism onto $A(\tau)$,
by virtue of Proposition~\ref{gtheory}. Thus, $f$ possesses a right inverse $f^{-1}\colon A(\tau)\onto G$
which is also a diffeomorphism. We estimate  $\E[g]-\E[h]$ in several steps.
The first step is to apply the chain rule to the derivatives of $g=h\circ f^{-1}(w)$ at $w=f(z)$.
\begin{equation}\label{gwder}
\begin{split}
\frac{\partial g}{\partial w} & =\frac{h_z\overline{f_z}-h_{\bar z}\overline{f_{\bar z}}}{J_f} \\
\frac{\partial g}{\partial \bar w} & =\frac{h_{\bar z}{f_z}-h_{\bar z}{f_{\bar z}}}{J_f}
\end{split}
\end{equation}
Then by change of variables the Dirichlet energy of $g$ in $A(\tau)$ reduces to an integral over $G$.
\begin{equation*}
\begin{split}
\E[g]&=2\int_{A(\tau)} \left(\abs{g_w}^2+\abs{g_{\bar w}}^2\right)\,d w \\
&=2\int_{G}\frac{\abs{h_z \overline{f_z} - h_{\bar z} \overline{f_{\bar z}}}^2+
\abs{h_{\bar z} f_z - h_{z} f_{z}}^2}{J_f}\, d z
\end{split}
\end{equation*}
Next, subtract $\int_{G} \abs{Dh}^2$ from $\E[g]$, use the inequality $\abs{h_z}^2+\abs{h_{\bar z}}^2\ge 2 \abs{h_z h_{\bar z}}$, and recall~\eqref{hopf1again} to obtain
\begin{align}
\E[g]-\int_{G} \abs{Dh}^2 &= 4\int_{G}
\frac{\left(\abs{h_z}^2+\abs{h_{\bar z}}^2\right)\, \abs{f_{\bar z}}^2
-2\re \left[h_z \overline{h_{\bar z}} \overline{f_z}f_{\bar z}\right]}{J_f}\, d z \nonumber \\
& \label{cchain} \ge 4 \int_{G} \frac{ 2\abs{h_z h_{\bar z}}\, \abs{f_{\bar z}}^2
-2\re \left[h_z\overline{h_{\bar z}} \overline{f_z}f_{\bar z}\right]}{J_f}\, d z \\
& = 4\abs{c} \int_{G} \left[\frac{\abs{f_z-\sigma f_{\bar z}}^2}{J_f} -1 \right]
\, \frac{dz}{\abs{z}^2}, \quad \mbox{where } \sigma=\sigma(z)= \frac{c\bar z}{\abs{c}z} \nonumber
\end{align}
We must also account for the integral of $\abs{Dh}^2$ over $A(\tau_\circ)\setminus G$. On this set $J_h=0$ a.e. by
Lemma~\ref{goodset}, which in view of~\eqref{hopf1again} implies
\[
\abs{h_z}^2+\abs{h_{\bar z}}^2 = 2\abs{h_z}^2 = \frac{2\abs{c}}{\abs{z}^2}.
\]
Hence
\begin{equation}\label{badeasy}
\int_{A(\tau_\circ)\setminus G} \abs{Dh}^2 = 4\abs{c} \int_{A(\tau_\circ)\setminus G} \frac{dz}{\abs{z}^2}.
\end{equation}
Combining~\eqref{cchain} and~\eqref{badeasy} we arrive at
\begin{equation}\label{newcchain}
\begin{split}
\E[g]-\E[h] &\ge  4\abs{c} \int_{G} \frac{\abs{f_z-\sigma f_{\bar z}}^2}{J_f}\,\frac{dz}{\abs{z}^2}
\, - 4\abs{c}\int_{A(\tau_\circ)} \frac{dz}{\abs{z}^2} \\ &=
4\abs{c} \int_{G} \frac{\abs{f_z-\sigma  f_{\bar z}}^2}{J_f}\, \frac{dz}{\abs{z}^2}
-\, 8\pi \abs{c} \tau_\circ.
\end{split}
\end{equation}

At this stage the sign of $c$ comes into play. Note that
\[
\frac{\abs{f_z-\sigma f_{\bar z}}^2}{J_f}
= \begin{cases} K_T^f \quad \text{if } c>0 \\ K_N^f \quad \text{if } c<0.
\end{cases}
\]
Lemma~\ref{goodset} tells us that the Jacobian $J_h$ vanishes almost everywhere on $A(\tau_\circ)\setminus G$. This  together
with~\eqref{important} imply that one of directional derivatives of $h$ must vanish a.e. on $A(\tau_\circ)\setminus G$:
$h_T=0$ if $c>0$ or $h_N=0$ if $c<0$. Since $f=g^{-1}\circ h$, the same alternative applies to the directional derivatives  $f_T$ and $f_N$.
In summary, the last integral in~\eqref{newcchain} may as well be taken over $A(\tau_\circ)$ instead of $G$.
\begin{equation}\label{newcc}
\E[g]-\E[h] \ge \begin{cases}
\displaystyle 4\abs{c} \int_{A(\tau_\circ)} K_T^f\,\frac{dz}{\abs{z}^2}  -\, 8\pi \abs{c} \tau_\circ  \quad \text{if } c>0
\vspace{0.3cm}
\\
\displaystyle 4\abs{c} \int_{A(\tau_\circ)} K_N^f\,\frac{dz}{\abs{z}^2}  -\, 8\pi \abs{c} \tau_\circ \quad \text{if } c<0.
\end{cases}
\end{equation}

In the case $c>0$ we apply Proposition~\ref{rwthe} to $f$ and obtain the estimate
\begin{equation}\label{RSWthe7}
\int_{A(\tau_\circ)} K_T^f \, \frac{d z}{\abs{z}^2} \, \ge \, 2\pi \frac{\tau_\circ^2}{\tau}
\end{equation}
which together with~\eqref{newcc} yield
\begin{equation}
\E[g]-\E[h] \ge 8\pi c \frac{\tau_\circ}{\tau}(\tau_\circ-\tau) \ge 8\pi c (\tau_\circ-\tau), \qquad 0<\tau_\circ,\tau<\infty.
\end{equation}

If $c<0$, then Proposition~\ref{rwrho}~(b) applies to the restriction of $f$ to $G$, yielding
\begin{equation}\label{RSWrho7}
\int_{G} K_N^f \, \frac{d z}{\abs{z}^2} \ge 2\pi {\tau}
\end{equation}
which together with~\eqref{newcchain} imply~\eqref{claim10}.

It remains to prove the equality statement.
Since $h$ is a sense-preserving diffeomorphism, we have $G=A(\tau_\circ)$ and $\abs{h_z}>\abs{h_{\bar z}}$ everywhere in $A(\tau_\circ)$.
If equality holds in~\eqref{claim10}, then it also holds in~\eqref{cchain}. The latter is only possible if
$f_{\bar z}\equiv 0$ in $A(\tau_\circ)$. Thus $f\colon A(\tau_\circ)\onto A(\tau)$ is a conformal mapping. This implies $\tau_\circ=\tau$, as desired.
\end{proof}

\begin{proof}[Proof of Theorem~\ref{convextheorem}]
Pick $\tau_\circ \in (0, \Mod \Omega^\ast)$. By Theorem~\ref{emexist} there exists $h\in \Ho^{1,2} (A(\tau_\circ), \Omega^\ast)$ such that $\E[h]=\EE(\tau_\circ,\Omega^*)$.  Consequently,~\eqref{hopf1again} holds. That $c>0$ follows from Corollary~\ref{cpositive}. We now claim that
\begin{equation}\label{congoal}
\EE(\tau,\Omega^*)- \EE(\tau_\circ,\Omega^*) >  -8 \pi  c (\tau-\tau_\circ), \qquad \tau \in
(0, \Mod \Omega^\ast) , \quad \tau \ne \tau_\circ
\end{equation}
Indeed, by Theorem~\ref{emexist} there exists $g\in \Ho^{1,2} (A(\tau), \Omega^\ast)$ such that $\E[g]=\EE(\tau,\Omega^*)$. Proposition~\ref{needed} is exactly what we need for~\eqref{congoal}.

Inequality~\eqref{congoal} tells us that $\EE(\tau,\Omega^*)$ is   strictly convex.
Together with~\eqref{econt1} it yields the existence of the derivative
\[
\frac{d}{d\tau}\bigg|_{\tau=\tau_\circ}\EE(\tau,\Omega^*) = -8\pi c,
\]
Incidentally or not, this shows that $c$ depends only on $\tau_\circ$ and $\Omega^*$, but not on $h$.
Every convex function, once differentiable everywhere, is automatically $\CC^1$-smooth; the theorem
is fully established.
\end{proof}

The strict convexity part of Theorem~\ref{convextheorem} fails for $\tau>\Mod\Omega^*$. We demonstrate this with an example based on the results of~\cite{AIM}. Although the paper~\cite{AIM}
is concerned with the minimization of energy in a somewhat different class of Sobolev mappings, its approach carries over to our setting with no changes.

\begin{example}\label{circ} Let $\Omega^*=A(1,R_*)$ where $1<R_*<\infty$.
The function $\tau\mapsto \EE(\tau,\Omega^*)$ is $\CC^2$-smooth on $(0,\infty)$,
strictly convex for $0<\tau<\log \cosh\Mod\Omega^*$ and affine for $\tau>\log \cosh \Mod\Omega^*$.
\end{example}

\begin{proof}
Let $\Omega=A(1,R)$ where $R=e^\tau$.
We begin with the case $0<\tau<\log \cosh\Mod\Omega^*$.  In terms of $R$ this condition reads as
\begin{equation}\label{circ0}
R_*\ge \frac{1}{2}\left(R+\frac{1}{R}\right), \qquad \text{equivalently,  }\; \; R\le R_*+\sqrt{R_*^2-1}.
\end{equation}
Let $\lambda\in (-1,1]$ be determined by the equation
\begin{equation}\label{circ1}
R_*=\frac{1}{1+\lambda}\left(R+\frac{\lambda}{R}\right);\quad  \mbox{ that is, }\; \;  \lambda=\frac{R(R-R_*)}{RR_*-1}.
\end{equation}
By~\cite[Corollary~2]{AIM} the infimum of energy $\EE(\Omega,\Omega^*)$ is achieved by the mapping
\begin{equation}
h^{\lambda}(z)=\frac{1}{1+\lambda}\left(z+\frac{\lambda}{\bar z}\right),
\end{equation}
for which we compute
\begin{equation}\label{circ2}
\E[h^{\lambda}] = 2\pi\frac{(R^2-1)(R^2+\lambda^2)}{R^2(1+\lambda)^2}.
\end{equation}
which  yields
\begin{equation}\label{circ3}
\EE(\log R,\Omega^*) = 2\pi \frac{(R^2+1)[(R_*^2+1)-4RR_*]}{R^2-1}.
\end{equation}
A straightforward computation reveals that the righthand side of~\eqref{circ3} is a convex function of $\log R$ in the range given by~\eqref{circ0}. Indeed, its derivative with respect to $\log R$ is equal to
\begin{equation}\label{circ5}
R\frac{d }{dR}\EE(\log R,\Omega^*) = \frac{8\pi R (R-R_*)(RR_*-1)}{(R^2-1)^2}.
\end{equation}
Differentiating~\eqref{circ5} once again, we find
\begin{equation}\label{circ6}
\begin{split}
R\frac{d }{dR} &\left(R\frac{d }{dR}\EE(\log R,\Omega^*)  \right)  \\
&= \frac{8\pi R}{(R^2-1)^3} \left\{(R_*R^2-2R+R_*)(2RR_*-R^2-1)\right\}.
\end{split}
\end{equation}
The right hand side of~\eqref{circ6} has the same sign as $(2RR_*-R^2-1)$, which proves the claim.
For future reference we note that at the transition point $R= R_*+\sqrt{R_*^2-1}$
the equations~\eqref{circ5} and~\eqref{circ6} yield one-sided derivatives of $\EE(\tau,\Omega^*)$, namely
\begin{equation}\label{transi}
\frac{d}{d\tau} \EE(\tau,\Omega^*) = 2\pi, \qquad
\frac{d^2}{d\tau^2} \EE(\tau,\Omega^*) =0.
\end{equation}

It remains to consider the case $\tau>\log \cosh\Mod\Omega^*$.
Now it is more convenient to work with   $\Omega=A(r,R)$ where $R=R_*+\sqrt{R_*^2-1}$ and $r<1$.
By~\cite[Theorem 1.8]{IO} the infimum $\EE(\Omega,\Omega^*)$ is realized by a non-injective deformation $h \colon \Omega \onto \Omega^\ast$.
\[h=\begin{cases} \frac{z}{\abs{z}} & \mbox{for } r<\abs{z} \le 1\\
\frac{1}{2} \left(z+\frac{1}{\bar z}\right) & \mbox{ for } 1\le \abs{z} <R \end{cases}\]
Here the radial projection $z\mapsto z/\abs{z}$
hammers $A(r,1)$ onto the unit circle while the Nitsche mapping $\frac{1}{2} \left(z+ \frac{1}{\bar z}\right)$ takes $A(1,R)$ homeomorphically onto $\Omega^\ast$.
The contribution of the radial projection to the energy of $h$ is equal to
\begin{equation}
2\pi \log \frac{1}{r} = 2\pi (\tau-\log \cosh\Mod\Omega^*).
\end{equation}
This is an affine function of $\log R$ whose first derivative equals  $2\pi$ and the second derivative vanishes. This result remains in agreement with formulas~\eqref{transi}. Thus  $\EE(\tau,\Omega^*)$ is a $\CC^2$-smooth function.
\end{proof}

\section{Open questions and conjectures}\label{quesec}

In~\eqref{en2} and~\eqref{en1} we defined   two infima of energy; the one denoted $\EE_\Ho(\Omega,\Omega^*)$ runs over homeomorphisms and the other, $\EE(\Omega,\Omega^*)$, over deformations in the sense of
Definition~\ref{defdef}). Clearly $\EE_\Ho(\Omega,\Omega^*)\ge \EE(\Omega,\Omega^*)$. Under the hypotheses of Theorem~\ref{q4}  $\EE_\Ho(\Omega,\Omega^*)= \EE(\Omega,\Omega^*)$.

\begin{question}\label{obvious}
For $k\ge 2$, is $\EE_\Ho(\Omega,\Omega^*) = \EE(\Omega,\Omega^*)$  for all $k$-connected bounded domains $\Omega$ and $\Omega^\ast$ in $\C$?
\end{question}

This question  has the affirmative answer in the case $k=1$ thanks to the Riemann mapping theorem.   Indeed, due to Corollary~\ref{useless} the formula~\eqref{ener2} remains valid for all deformations. Therefore, the conformal mapping minimizes the energy.

Theorem~\ref{convextheorem} and Example~\ref{circ} motivate the following conjecture.

\begin{conjecture}
The function $\tau\mapsto \EE(\tau,\Omega^*)$ is convex for $0<\tau<\infty$.
\end{conjecture}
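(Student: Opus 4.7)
Plan. I would follow the pattern of the proof of Theorem~\ref{convextheorem} and attempt to promote it from the sub-conformal range $\tau_\circ<\Mod\Omega^*$ to all of $(0,\infty)$. Convexity of $\EE(\cdot,\Omega^*)$ on $(0,\infty)$ is equivalent to the existence, at every $\tau_\circ$, of a supporting affine function. So fix $\tau_\circ\in(0,\infty)$. By Corollary~\ref{attain} there is an energy-minimal deformation $h^\circ\in\D(A(\tau_\circ),\Omega^*)$, and by Lemma~\ref{ctheory} its Hopf differential has the form $h^\circ_z\overline{h^\circ_{\bar z}}\equiv c/z^2$ for a unique real constant $c=c(\tau_\circ)$. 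The differentiation argument in the proof of Theorem~\ref{convextheorem} identifies $-8\pi c$ as the natural one-sided derivative of $\EE(\cdot,\Omega^*)$ at $\tau_\circ$, so the candidate supporting line is $L_{\tau_\circ}(\tau):=\EE(\tau_\circ,\Omega^*)-8\pi c(\tau-\tau_\circ)$, and the task reduces to proving $\EE(\tau,\Omega^*)\ge L_{\tau_\circ}(\tau)$ for every $\tau>0$.

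The range $\tau_\circ\le\Mod\Omega^*$ is Theorem~\ref{convextheorem}, so only $\tau_\circ>\Mod\Omega^*$ requires new work. In that range $h^\circ$ need not be a diffeomorphism (cf.~Example~\ref{circ}), but the proof of Proposition~\ref{needed} is written precisely for this setting: it splits $A(\tau_\circ)$ into $G=\{z\colon h^\circ(z)\in\Omega^*\}$, on which $h^\circ$ is a harmonic diffeomorphism onto $\Omega^*$ by Proposition~\ref{gtheory}, and its complement, on which $J_{h^\circ}=0$ and $|h^\circ_N|^2=2|c|/|z|^2$ by~\eqref{important}. Consequently, for any diffeomorphism $g\colon A(\tau)\onto\Omega^*$, Proposition~\ref{needed} yields $\E[g]\ge\E[h^\circ]-8\pi c(\tau-\tau_\circ)=L_{\tau_\circ}(\tau)$, so the infimum over diffeomorphisms already dominates $L_{\tau_\circ}$.

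The main obstacle is that $\EE(\tau,\Omega^*)$ is defined as an infimum over the strictly larger class $\D(A(\tau),\Omega^*)$, which a~priori could lie below the diffeomorphism infimum. Closing this gap is essentially Question~\ref{obvious}. Two routes suggest themselves. The first is to prove that diffeomorphisms are energy-dense in $\D(A(\tau),\Omega^*)$: given $g\in\D$, one would want a $c\delta$-approximating sequence of diffeomorphisms $g_j$ with $\E[g_j]\to\E[g]$, after which Proposition~\ref{needed} applied to each $g_j$ finishes the argument. The second is to re-derive Proposition~\ref{needed} directly for deformations $g$, replacing the pointwise composition $f=g^{-1}\circ h^\circ$ by the almost-everywhere inverse supplied by Lemma~\ref{multilemma} (recall $N(g,\cdot)=1$ a.e. on $\Omega^*$), carrying out the chain-rule calculation~\eqref{gwder} as an integral identity via Corollary~\ref{useless}, and closing the estimate with the deformation forms of the Reich-Walczak inequalities, Propositions~\ref{rwrho}(a) and~\ref{rwthe}(a).

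The hard part will be executing either route without extra regularity on $\Omega^*$. For the first route, Lemma~\ref{wclosed} provides only weak $W^{1,2}$ lower semicontinuity, whereas the argument needs a \emph{matching} upper bound $\limsup\E[g_j]\le\E[g]$ together with uniform control of the $c\delta$-approximation near $\partial\Omega$. For the second route, the obstruction is that $g$ may collapse a nontrivial subset of $A(\tau)$ onto $\partial\Omega^*$, so the formal composition $g^{-1}\circ h^\circ$ may not belong to any Sobolev space on $A(\tau_\circ)$; making the inner-variation identity rigorous would require a careful truncation away from this collapsing set and a passage to the limit that preserves the sharp constant $-8\pi c$. I expect that either route, if completed, would simultaneously resolve Question~\ref{obvious} for annular domains, and that the convexity conjecture is essentially of the same difficulty.
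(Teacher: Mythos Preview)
This statement is a \emph{conjecture} in the paper, not a theorem; the paper offers no proof, only the one-line remark that it would follow from a positive answer to Question~\ref{obvious} by means of Proposition~\ref{needed}. Your proposal arrives at precisely this reduction: Proposition~\ref{needed} supplies the candidate supporting line $\tau\mapsto\EE(\tau_\circ,\Omega^*)-8\pi c(\tau-\tau_\circ)$ against the diffeomorphism infimum at every $\tau_\circ$, and the remaining gap between that infimum and $\EE(\tau,\Omega^*)$ is exactly (the doubly connected case of) Question~\ref{obvious}. You correctly identify this as the obstruction rather than claiming a proof, and your concluding sentence---that the conjecture is essentially of the same difficulty as Question~\ref{obvious} for annuli---is exactly the paper's position.
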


Note that it would follow from the positive answer to Question~\ref{obvious}, by means of Proposition~\ref{needed}.

We expect that Theorem~\ref{emexist} can be given the following sharp form.

\begin{conjecture}
If two bounded doubly connected domains  $\Omega$ and $\Omega^*$ in $\C$ admit an energy-minimal
diffeomorphism $h\colon \Omega\onto\Omega^*$, then
\[
\Mod\Omega^\ast\ge \log \cosh \Mod \Omega.
\]
Moreover, if both sides are finite and equal, then $\Omega^*$ is a circular annulus.
\end{conjecture}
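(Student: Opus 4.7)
The plan is to sharpen the bound in Theorem~\ref{emexist} from $\Upsilon(\tau)$ to the conjectured $\log\cosh\tau$. After a conformal change of variables in $\Omega$, I would normalize $\Omega=A(R^{-1},R)$ with $R>1$, so $\tau:=\Mod\Omega=2\log R$, and invoke Lemma~\ref{ctheory} to write the Hopf differential as $h_z\overline{h_{\bar z}}=c/z^2$ with $c\in\R$. The analysis splits by the sign of $c$.

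In the easy case $c\ge 0$, Lemma~\ref{ctheory} gives $K_T^h\le 1$ a.e., so Proposition~\ref{rwthe} immediately yields $\Mod\Omega^\ast\ge\Mod\Omega$. Since $\tau\ge\log\cosh\tau$ for all $\tau\ge 0$, the conjectured inequality holds, and equality forces $\tau=\log\cosh\tau$, i.e.\ $\tau=0$, a degenerate case. The substantive case is $c=-b^2<0$. Here I would retain the construction in the proof of Theorem~\ref{emexist}: the dilatation $\nu=\overline{h_{\bar z}}/h_z$ has a single-valued holomorphic square root $\omega=ib/(zh_z)\colon\Omega\to\DD$, and the derivatives of $h$ take the form $h_z=ib/(z\omega)$ and $h_{\bar z}=-ib\bar\omega/\bar z$. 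The single-valuedness relation $\oint_{|z|=\rho}dh=0$ on every concentric circle translates into the mean-zero identity
\[
\oint_{|z|=\rho}\left(\frac{1}{\omega}+\bar\omega\right)d\theta=0,\qquad \rho\in(R^{-1},R).
\]

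The heart of the matter is to convert this global constraint, rather than its weak sign consequence used in Theorem~\ref{emexist}, into a sharp lower bound on $\Mod\Omega^\ast$. My strategy would be a length--area argument. The image curves $h(\{|z|=\rho\})$ separate the two boundary components of $\Omega^\ast$; their Euclidean lengths are $b\int_0^{2\pi}|\omega^{-1}-\bar\omega|\,d\theta$, and the area is $|\Omega^\ast|=\int_\Omega J_h=b^2\int_\Omega(|\omega|^{-2}-|\omega|^2)|z|^{-2}dz$. Both are determined by $\omega$ alone. Using the mean-zero constraint together with a Cauchy--Schwarz or rearrangement inequality on each circle, and then comparing with the explicit extremal $\omega(z)=\eta/z$ (which yields $h^\ast(z)=\frac{1}{2}(z+1/\bar z)$ up to rotation/reflection and is conformally the Nitsche map of Example~\ref{ellip}), I would aim to establish
\[
\Mod\Omega^\ast\ge 2\pi\,\frac{\bigl(\inf_{\theta_0}\text{length of }h(\{\arg z=\theta_0\})\bigr)^2}{|\Omega^\ast|}\;\ge\;\log\cosh\tau,
\]
with equality if and only if $\omega(z)=\eta/z$ for some $|\eta|=1$. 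Substituting this extremal $\omega$ into the formulas for $h_z,h_{\bar z}$ and integrating recovers the Nitsche map (post-composed with a conformal rotation), whose image is by construction a circular annulus; this gives the equality statement of the conjecture.

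The principal obstacle is precisely the sharp length--area step. The proof of Theorem~\ref{emexist} applies the subordination principle pointwise at two antipodal points on $\T$ and necessarily loses the refinement needed to reach $\log\cosh$. Converting the family of mean-zero conditions on concentric circles into a sharp lower bound on $\Mod\Omega^\ast$ will likely require either a new extremal-metric identity tailored to round annuli, or a Schwarz reflection argument across $\partial\Omega$ exploiting the reality of $(c/z^2)\,dz^2$ on the boundary. I expect this is where new ideas beyond the Green's function subordination methods of the present paper will be needed; the remaining steps, including the characterization of the equality case, should be routine consequences of sharpness.
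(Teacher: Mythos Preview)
This statement is a \emph{conjecture} in the paper (stated in \S\ref{quesec}); the paper does not prove it and explicitly presents it as open. There is therefore no paper's proof to compare your proposal against.

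Your proposal is not a proof either, and you say so yourself: you write that ``the principal obstacle is precisely the sharp length--area step'' and that ``new ideas beyond the Green's function subordination methods of the present paper will be needed.'' What you have supplied is a plausible outline for an attack on the $c<0$ case, together with a correct handling of the easy $c\ge 0$ case (which already appears in the proof of Theorem~\ref{emexist}). The central inequality you would need---that the mean-zero constraint on $\omega^{-1}+\bar\omega$ along every concentric circle forces $\Mod\Omega^\ast\ge\log\cosh\tau$, with equality only for $\omega(z)=\eta/z$---is left entirely unproved. The displayed length--area inequality you sketch is not a standard inequality and its direction is unclear as written; you would need to specify which curve family you are measuring and derive the bound rigorously before this could be assessed.

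In short: the gap is exactly where you locate it. The reduction to a variational problem for the holomorphic function $\omega\colon\Omega\to\DD$ subject to the circle constraints is reasonable, and identifying the Nitsche map as the conjectured extremal is correct (cf.\ Example~\ref{ellip} and the results of~\cite{AIM}). But turning this into a sharp inequality is the whole difficulty, and nothing in your proposal bridges it.
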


Concerning the existence of energy-minimal diffeomorphisms between
domains of higher connectivity, we propose a
generalization of Theorem~\ref{mainexist}.

\begin{conjecture}\label{kexist} Let $\Omega$ and $\Omega^*$ be
bounded $k$-connected domains in $\C$, where $k\ge 2$.
Suppose that $\Omega\subset \Omega^*$ where the inclusion  is a
homotopy equivalence. Then there exists
an energy-minimal diffeomorphism of $\Omega$ onto $\Omega^*$.
\end{conjecture}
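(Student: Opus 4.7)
The strategy is to follow the four-step outline used for Theorem~\ref{q4}, with the homotopy-equivalence hypothesis $\Omega\subset\Omega^*$ playing the role the inequality $\Mod\Omega\le\Mod\Omega^*$ played in the doubly-connected case. First I would verify $\D(\Omega,\Omega^*)\ne\emptyset$ by observing that the inclusion $\iota\colon\Omega\hookrightarrow\Omega^*$ has finite Dirichlet energy $2\abs{\Omega}$; it belongs to $\D(\Omega,\Omega^*)$ because a smooth ambient isotopy of $\C$ pushing each component of $\C\setminus\Omega^*$ onto the corresponding component of $\C\setminus\Omega$ (available precisely because the inclusion is a homotopy equivalence) yields an approximating sequence $\iota_j\colon\Omega\onto\Omega^*$ with $\iota_j\cto\iota$. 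Corollary~\ref{attain}, whose proof only requires $k\ge 2$ and nondegenerate boundary of $\Omega$, then delivers an energy-minimal deformation $h\in\D(\Omega,\Omega^*)$.

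Second, I would establish the harmonic-diffeomorphism structure on $G:=h^{-1}(\Omega^*)$ exactly as in Proposition~\ref{gtheory}: Lemma~\ref{mono} shows $G$ is connected, the Modification Lemma~\ref{RKC} combined with Lemma~\ref{harmrep} shows $h\big|_G$ is a harmonic local diffeomorphism, and the connectedness of the fibers of $h$ upgrades this to a global harmonic diffeomorphism $h\colon G\onto\Omega^*$. In particular $G$ is $k$-connected, and the inclusion $G\hookrightarrow\Omega$ is itself a homotopy equivalence.

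Third, and this is the decisive step, one must prove $G=\Omega$. In the doubly-connected setting this is forced by the strict monotonicity of $\tau\mapsto\EE(\tau,\Omega^*)$ in Proposition~\ref{q3}, which rests on the one-parameter family of power-stretch deformations between round annuli and the explicit Hopf differential $c/z^2$ of Lemma~\ref{ctheory}. For $k\ge 3$ the Teichm\"uller space of $k$-connected planar domains is $(3k-6)$-dimensional and there is no single modulus parameter. I would replace the one-parameter monotonicity by a local analysis on each boundary collar of $\Omega$. Extending Lemma~\ref{ctheory}, the Hopf differential $h_z\overline{h_{\bar z}}\,dz^2$ of the stationary deformation $h$ is a holomorphic quadratic differential on $\Omega$ that is real on every boundary curve; localizing the Reich--Walczak estimates of Propositions~\ref{rwrho} and~\ref{rwthe} to each collar (after conformally uniformizing a neighborhood of the collar to a round annulus) and then running the Hopf-differential comparison of Proposition~\ref{needed} should give $\E[h\big|_G]>\EE(\Omega,\Omega^*)$ whenever $G\subsetneq\Omega$, a contradiction. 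Once $G=\Omega$, uniqueness up to conformal automorphism of $\Omega$ follows from the same comparison exactly as in Proposition~\ref{needed}.

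The hard part will be the global coupling of these $k$ local inequalities. A promising route is to work with a Jenkins--Strebel quadratic differential on $\Omega$ whose horizontal trajectories decompose $\Omega$ into ring domains around each boundary component, reducing the multi-parameter monotonicity to a finite list of one-dimensional strict inequalities of the type already handled in Proposition~\ref{q3}. Arranging this decomposition so that it interacts coherently with both the harmonic diffeomorphism $h\big|_G$ and the unknown collapse set $\Omega\setminus G$ — and, in particular, verifying that the Hopf differential of $h$ can be realized as such a Strebel differential — is where essentially all of the new technical content would lie.
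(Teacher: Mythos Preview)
The statement you are attempting to prove is \emph{Conjecture}~\ref{kexist}; the paper does not prove it for $k\ge 3$ and explicitly says so (``For $k=2$ Conjecture~\ref{kexist} is true, by virtue of Theorem~\ref{mainexist}''). So there is no ``paper's own proof'' to compare against, and what you have written is a research outline rather than a proof.

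Two concrete points on the outline itself. First, your Step~1 is incorrect as stated: the inclusion $\iota\colon\Omega\hookrightarrow\Omega^*$ is \emph{not} a deformation in the sense of Definition~\ref{defdef}. The associated boundary-distance function $\delta_\iota(z)=\dist(z,\partial\Omega^*)$ for $z\in\Omega$, extended by $0$ on $\partial\Omega$, is discontinuous on $\overline{\Omega}$ (since $\partial\Omega$ lies in the interior of $\Omega^*$), so it cannot be the uniform limit of the continuous functions $\delta_{h_j}$. Equivalently, Lemma~\ref{surjective} forces any deformation to satisfy $h(\Omega)\supset\Omega^*$, which $\iota$ plainly does not. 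Nonemptiness of $\D(\Omega,\Omega^*)$ is not the obstacle here---one can build a genuine $W^{1,2}$-homeomorphism $\Omega\onto\Omega^*$ by uniformizing both domains to circle domains and interpolating---but your specific candidate fails.

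Second, and more fundamentally, your Step~3 is exactly where the conjecture lives. In the doubly-connected case the entire argument hinges on two special facts: the Hopf differential is forced to be $c/z^2$ with a single real constant $c$ (Lemma~\ref{ctheory}), and the conformal type of $\Omega$ is a single real parameter along which one can differentiate $\EE$ (Proposition~\ref{q3}). For $k\ge 3$ neither holds: the space of holomorphic quadratic differentials real on $\partial\Omega$ is $(3k-6)$-dimensional, and there is no a~priori reason the Hopf differential of an energy-minimal deformation should be Jenkins--Strebel. Your proposal to ``localize the Reich--Walczak estimates to each collar'' presupposes a ring decomposition compatible with the Hopf differential of $h$, but arranging that compatibility is precisely the missing idea; you acknowledge this yourself in the final paragraph. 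Until that coupling is resolved, the argument does not rule out $G\subsetneq\Omega$, and the conjecture remains open.
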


For $k=2$ Conjecture~\ref{kexist} is true, by virtue of
Theorem~\ref{mainexist}. In the converse direction, we propose
a qualitative version of Theorem~\ref{emexist} for $k$-connected domains.

\begin{conjecture} Let $\Omega$ and $\Omega^*$ be bounded
$k$-connected domains in $\C$, where $k\ge 2$.
If $\epsilon>0$ is sufficiently small (depending on both $\Omega$ and $\Omega^\ast$), then there is no energy-minimal
homeomorphism of $\Omega$ onto $\phi(\Omega^*)$, where $\phi(x+iy)=\epsilon x+iy$.
\end{conjecture}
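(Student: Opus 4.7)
The plan is to assume for contradiction that an energy-minimal homeomorphism exists for arbitrarily small $\epsilon>0$, derive a Hopf-type rigidity condition on these hypothetical minimizers, and produce a contradiction via a compactness and topology argument in the limit $\epsilon\to 0$. Let $\epsilon_n\downarrow 0$ and suppose energy-minimal homeomorphisms $h_n\colon\Omega\onto\phi_{\epsilon_n}(\Omega^*)$ exist for each $n$. By the $k$-connected analogue of Proposition~\ref{gtheory}—proved as in Lemma~\ref{harmrep} via Poisson modification against convex targets inside $\Omega^*$—each $h_n$ is a harmonic diffeomorphism; since $\phi_\epsilon$ is real-affine, $g_n:=\phi_{\epsilon_n}^{-1}\!\circ h_n=p_n+iq_n\colon\Omega\onto\Omega^*$ is also a harmonic diffeomorphism. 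A direct computation with $h_n=\epsilon_n p_n+iq_n$ gives the Hopf identity
\[
(h_n)_z\,\overline{(h_n)_{\bar z}}\;=\;q_{n,z}^2+\epsilon_n^2\,p_{n,z}^2,
\]
in which $p_{n,z}$ and $q_{n,z}$ are holomorphic on $\Omega$. After a preliminary conformal change of variable making $\partial\Omega$ smooth, stationarity of $h_n$ forces this Hopf differential to belong to the finite-dimensional real vector space $V(\Omega)$ of holomorphic quadratic differentials on $\Omega$ that are real on $\partial\Omega$.

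Next I would extract compactness. Testing against the explicit competitor $\phi_{\epsilon_n}\!\circ g_0$ for any fixed $g_0\in\Ho^{1,2}(\Omega,\Omega^*)$ yields the uniform bound $\E[h_n]=4\epsilon_n^2\norm{p_{n,z}}_{L^2}^2+4\norm{q_{n,z}}_{L^2}^2\le C$, so $\{q_{n,z}\}$ and $\{\epsilon_n\,p_{n,z}\}$ are bounded in $L^2(\Omega)$. Cauchy's estimate for holomorphic functions then produces, along a subsequence, locally uniform limits $q_{n,z}\to Q$ and $\epsilon_n p_{n,z}\to P$; passing to the limit inside the closed finite-dimensional subspace $V(\Omega)$ gives $Q^2+P^2=:\Psi\in V(\Omega)$. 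Boundedness of $\Omega^*$ also yields $q_n\rightharpoonup q_0$ in $W^{1,2}$ with $q_{0,z}=Q$, and $q_0$ harmonic.

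The contradiction is topological. Because $h_n$ is a homeomorphism onto $\phi_{\epsilon_n}(\Omega^*)$, for each regular value $y\in\im\Omega^*$ the level set $q_n^{-1}(y)=(\im h_n)^{-1}(y)$ has exactly as many components as the horizontal slice $\{w\in\Omega^*\colon\im w=y\}$—an intrinsic topological invariant of $\Omega^*$ that is independent of $n$. Using the monotonicity of $h_n$ (Lemma~\ref{mono}) together with the $c\delta$-convergence provided by Lemma~\ref{noname}, this component count must persist for $q_0^{-1}(y)$. However, the relation $Q^2+P^2=\Psi\in V(\Omega)$ confines the level sets of the harmonic limit $q_0$ to arcs of the foliation defined by $\Psi$, whose combinatorial complexity is bounded in terms of $\dim V(\Omega)$ and the zero set of $\Psi$; for suitable $\Omega^*$ the slice count exceeds this bound and no such $q_0$ can exist. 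The main obstacle—which distinguishes the conjecture from Theorem~\ref{emexist}—is to promote this qualitative incompatibility into a contradiction already at each small $\epsilon_n>0$, rather than only in the degenerate limit. I expect this step to require either sharpening the compactness argument with quantitative estimates controlling the Hopf differential in $V(\Omega)$, or generalising the Reich--Walczak inequalities of Section~\ref{reisec} together with the subordination estimates from the proof of Theorem~\ref{emexist} to the doubly-connected subdomains spanned by pairs of boundary components of $\Omega$ and $\Omega^*$.
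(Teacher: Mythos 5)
There is nothing in the paper to compare your argument against: the statement you are addressing is one of the open problems collected in \S\ref{quesec}, stated there as a conjecture with no proof (the authors only offer the heuristic remark that flattening $\Omega^\ast$ should destroy injectivity of energy-minimal deformations). So your proposal has to stand on its own, and as written it is a research programme rather than a proof -- indeed you concede yourself that the decisive step is missing. Two remarks on the logic first: assuming minimizers exist for a sequence $\epsilon_n\downarrow 0$ is exactly the negation of the conjecture, so a genuine contradiction in the limit \emph{would} suffice; your worry about ``promoting'' it to each fixed small $\epsilon_n$ is not the real obstacle. The real obstacles are inside the limit argument itself.

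Concretely: (i) the persistence of level-set component counts is unjustified. Lemmas~\ref{noname} and~\ref{mono} are formulated for a fixed target with nondegenerate boundary components, whereas your targets $\phi_{\epsilon_n}(\Omega^*)$ collapse onto a one-dimensional set as $\epsilon_n\to 0$; the $c\delta$-convergence machinery does not apply to a degenerating family of targets, and under mere locally uniform convergence components of $q_n^{-1}(y)$ can merge or escape to $\partial\Omega$, so the count need not pass to $q_0^{-1}(y)$. (ii) The foliation step conflates two quadratic differentials: the level curves of the harmonic limit $q_0$ are trajectories of $Q^2\,dz^2$, not of $\Psi=Q^2+P^2$; only $\Psi$ is known (after the boundary-regularity step) to lie in the finite-dimensional space $V(\Omega)$, $P$ has no reason to vanish, and $Q^2$ alone carries no a priori bound on its zeros, so no ``combinatorial complexity'' bound on the level sets of $q_0$ follows. (iii) Even granting (i) and (ii), your contradiction is claimed only ``for suitable $\Omega^*$'' with many horizontal slice components, while the conjecture quantifies over \emph{all} bounded $k$-connected pairs; for instance when $\Omega^*$ is a circular $k$-connected domain the horizontal slices have a uniformly small number of components, and no counting argument of the proposed type can exceed any fixed complexity bound. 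So the approach, as it stands, cannot cover the general statement, and the conjecture remains open.
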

 In other words, if we flatten $\Omega^\ast$ too much in one direction the injectivity of  energy-minimal deformations $f \in \D(\Omega, \Omega^\ast)$ will be lost.

\section{Appendix: Monotone Sobolev mappings}\label{appen}

Throughout this section $\X$ will be a bounded domain in $\C$ whose complement consists of $k$ mutually disjoint closed connected sets denoted by
\[\C \setminus \X =  \x_1 \cup \dots \cup \x_k =:\x , \qquad k\ge 2.\]
It then follows that to every $\x_i$ there corresponds one and only one component of $\partial \X$, precisely equal to $\partial \x_i$,
\[\partial \X = \partial \x= \partial \x_1 \cup \dots \cup \partial \x_k .\]
Among those components there is exactly one unbounded. Similarly to $\X$, we consider a bounded domain in $\Y \subset \C$ whose complement consists of $k$-mutually disjoint closed connected sets denoted by
\[\C \setminus \Y = \y_1 \cup \dots \cup \y_k =:\y.\]
We make one standing assumption on $\X$; namely, none of the components $\x_1, \dots , \x_k$ degenerates to a single point.
\begin{equation}\label{smallestdiam}
\min_{1\le i \le k} \diam \x_i =d >0
\end{equation}
Similar assumption on $\Y$ will not be required. Let us denote
\begin{equation}
 \rho_{{}_{\Y}}= \inf_{\alpha \ne \beta} \dist (\y_\alpha, \y_\beta)>0.
\end{equation}
We shall examine the class $\mathcal F_{\Y} (\X)$ of mappings  $h \colon \X \to \C$ such that 
\begin{enumerate}[(i)]
\item $h\in C(\X) \cap W^{1,2}(\X)$;
\item  $h(\X) \supset \Y$;
\item\label{clust} $h\{\partial \x_i\} \subset \partial \y_i$,  $i=1, \dots , k$, in the sense of cluster sets;
\item\label{monoapp}  the restriction of $h$ to $h^{-1}(\Y)$ is monotone.
\end{enumerate}

It follows from~\eqref{clust}  that $h^{-1}(\Gamma)$ is compact  for any compact set $\Gamma \subset \Y$. If in addition $\Gamma$ is  connected, then $h^{-1}(\Gamma)$ is connected by Proposition~\ref{why}.
\begin{lemma}\label{lemapp1}
There is a constant $c=c(\X, \Y)>0$ such that
\begin{equation}
\E[h] \ge c(\X, \Y), \qquad \mbox{ for every } h \in \mathcal F_\Y (\X).
\end{equation}
In fact, we have the following explicit bound.
\[\int_\X \abs{Dh}^2 \ge \frac{ \rho^2_{{}_{\Y}} \, d }{\diam \X}. \]
\end{lemma}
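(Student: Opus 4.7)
The plan is to bound $\E[h]$ from below by integrating $|\partial_y h|^2$ over a pencil of vertical segments in $\X$, each of which connects $\partial\x_1$ to some other $\partial\x_j$. Exactly one of the components $\x_1,\ldots,\x_k$ is unbounded (since $\X$ is bounded), so by relabeling I may assume $\x_1$ is bounded, hence compact, with $\diam\x_1\ge d$. Rotations preserve $\E[h]$, $d$, $\rho_{{}_{\Y}}$ and $\diam\X$, so I rotate so that two diameter-realizing points of $\x_1$ have the same imaginary part; the projection of the connected set $\x_1$ onto the real axis is then an interval $[a,b]$ of length $\ge d$.

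For each $x_0\in[a,b]$ let $y_1^+(x_0):=\max\{y\colon x_0+iy\in\x_1\}$, which is attained by compactness of $\x_1$. Since $\x_1$ is compact and each $\x_j$ ($j\ne 1$) is closed and disjoint from it, $\dist(\x_1,\x_j)>0$, so a small upward segment starting just above $x_0+iy_1^+(x_0)$ lies in $\X$. Travelling up, we cannot re-enter $\x_1$ (by maximality of $y_1^+$), but we must leave $\X$ (as $\X$ is bounded), so
\[
z_1^-(x_0):=\inf\{y>y_1^+(x_0)\colon x_0+iy\notin\X\}<\infty
\]
and $x_0+iz_1^-(x_0)\in\partial\x_{j(x_0)}$ for some $j(x_0)\ne 1$. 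The open segment $I_{x_0}:=\{x_0+iy\colon y_1^+(x_0)<y<z_1^-(x_0)\}$ lies in $\X$ and has length $\ell(x_0):=z_1^-(x_0)-y_1^+(x_0)\le\diam\X$.

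By the ACL property of $W^{1,2}$, for a.e.\ $x_0\in[a,b]$ the slice $y\mapsto h(x_0+iy)$ is absolutely continuous on $I_{x_0}$. Hypothesis (iii) forces every cluster value of $h$ as $y\to y_1^+(x_0)^+$ to lie in $\partial\y_1\subset\y_1$, and as $y\to z_1^-(x_0)^-$ to lie in $\partial\y_{j(x_0)}\subset\y_{j(x_0)}$; hence $\dist(h(x_0+iy),\y_1)\to 0$ and $\dist(h(x_0+iy),\y_{j(x_0)})\to 0$ at the two endpoints respectively. Choosing sequences $\alpha_n\searrow y_1^+(x_0)$ and $\beta_n\nearrow z_1^-(x_0)$ with $|h(x_0+i\beta_n)-h(x_0+i\alpha_n)|\ge\rho_{{}_{\Y}}-o(1)$, Cauchy--Schwarz on $[\alpha_n,\beta_n]$ and passage to the limit yield
\[
\rho_{{}_{\Y}}^2\;\le\;\ell(x_0)\int_{I_{x_0}}|\partial_y h|^2\,dy\;\le\;\diam\X\cdot\int_{I_{x_0}}|\partial_y h|^2\,dy.
\]

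Finally, Fubini gives
\[
\int_\X|Dh|^2\;\ge\;\int_a^b\!\int_{I_{x_0}}|\partial_y h|^2\,dy\,dx_0\;\ge\;\frac{\rho_{{}_{\Y}}^2(b-a)}{\diam\X}\;\ge\;\frac{\rho_{{}_{\Y}}^2\,d}{\diam\X}.
\]
The only non-routine step is arguing that the upper endpoint of $I_{x_0}$ lies on $\partial\x_j$ for some $j\ne 1$, which relies on compactness of $\x_1$ (making $y_1^+$ a genuine maximum) together with $\dist(\x_1,\x_j)>0$ for $j\ne 1$. Note that surjectivity (ii) and monotonicity (iv) play no role in this estimate; only (i), (iii), and the diameter hypothesis on $\x_1$ are used.
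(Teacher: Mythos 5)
Your proof is correct and follows essentially the same route as the paper's: slice $\X$ by parallel lines over the diameter of a bounded component $\x_i$, use condition (iii) to force oscillation at least $\rho_{{}_{\Y}}$ along a subsegment of each slice joining $\partial\x_i$ to some $\partial\x_\alpha$, then apply Cauchy--Schwarz and Fubini. The only difference is cosmetic: you spell out, via the maximality of $y_1^+$ and $\dist(\x_1,\x_j)>0$, why such a connecting segment exists, a point the paper asserts without detail.
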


\begin{proof}
Choose  a bounded component $\x_i$. Let a line segment $I$ with the end-points in $\x_i$ represent the diameter of $\x_i$; thus $\abs{I}= \diam \x_i$.  Through every point $t\in I$ there passes a straight line $L_t$ perpendicular to $I$. One of the components of $\X \cap L_t$, say an open interval $\gamma$, connects $\partial \x_i$ with $\partial \x_\alpha$, for some $\alpha \ne i$. Thus, by condition~\eqref{clust},
\[\int_{\X \cap L_t} \abs{Dh} \ge \int_\gamma \abs{Dh} \ge \dist (\y_\alpha, \y_i) \ge  \rho_{{}_{\Y}} .\]
This is true for almost every $t\in I$, as long as $h$ is locally absolutely continuous on $\X \cap L_t$. By H\"older's inequality
\[\int_{\X\cap L_t} \abs{Dh}^2 \ge \frac{1}{\abs{\X \cap L_t}} \left(\int_{\X \cap L_t} \abs{Dh}  \right)^2 \ge \frac{ \rho^2_{{}_{\Y}}}{\diam \X}.\]
Integrating with respect to $t\in I$, by Fubini's theorem, we conclude that
\[ \int_\X \abs{Dh}^2 \ge \int_I \left( \int_{\X \cap L_t} \abs{Dh}^2  \right)\, dt \ge \frac{\diam \x_i}{\diam \X}\,  \rho^2_{{}_{\Y}} \ge  \frac{ \rho^2_{{}_{\Y}} \, d }{\diam \X} \]
as desired.
\end{proof}

\begin{theorem}\label{limittheorem}
For each $i=1, \dots , k$ there exists a continuous function $\eta_i=\eta^i_{{}_{\X , \Y}} (z)$ on $\C$, vanishing on $\x_i$, such that for every $h\in \mathcal F_\Y (\X)$ we have
\begin{equation}\label{distineq}
\dist \big(h(z), \y_i\big) \le \eta_i (z) \sqrt{\E[h]}, \qquad z\in\X.
\end{equation}
\end{theorem}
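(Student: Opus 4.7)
My plan is to establish, for each boundary point $z_0\in\partial\x_i$, a pointwise local bound
\[
\dist(h(z),\y_i)^2 \;\le\; \frac{C\,\E[h]}{\log(\rho_0/|z-z_0|)},\qquad z\in D(z_0,\rho_0/4)\cap\X,
\]
uniformly in $h\in\mathcal F_{\Y}(\X)$, with $\rho_0=\rho_0(z_0)$ and $C=C(z_0)$ depending only on $\X$ and $\Y$. From such local bounds, $\eta_i$ is assembled by composing $\dist(\cdot,\x_i)$ with a continuous nondecreasing modulus $\psi_i$ satisfying $\psi_i(0)=0$ that majorizes all local majorants $\sqrt{C/\log(\rho_0/t)}$ along a finite cover of $\partial\x_i$, and extending by a suitable constant outside a neighborhood of $\x_i$; the far-field regime is handled by the trivial estimate $\dist(h(z),\y_i)\le\diam\Y$ combined with the energy lower bound $\sqrt{\E[h]}\ge\sqrt{c(\X,\Y)}$ supplied by Lemma~\ref{lemapp1}.

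For the local estimate, I fix $z_0\in\partial\x_i$ and choose $\rho_0>0$ so small that $\overline{D(z_0,2\rho_0)}$ avoids every $\x_j$ with $j\ne i$. For $z\in D(z_0,\rho_0/4)\cap\X$, the Fubini--Courant--Lebesgue argument applied to the annulus $A(z_0,2|z-z_0|,\rho_0)\subset\X$ produces a radius $r\in(2|z-z_0|,\rho_0)$ on which $h$ is absolutely continuous along the circle $C_r=\partial D(z_0,r)$ with
\[
\mathrm{osc}(h,C_r)^2 \;\le\; \frac{C_1\,\E[h]}{\log(\rho_0/|z-z_0|)},
\]
via the Cauchy--Schwarz step underlying~\eqref{continuity}. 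The key transfer step is to propagate this circular oscillation bound to a bound on $\dist(h(z),\y_i)$ for all $z$ in the disk $D_r=D(z_0,r)\cap\X$, which by~\eqref{smallestdiam} and connectedness of $\x_i$ also contains a nondegenerate arc of $\partial\x_i$. Provided one knows that $h(C_r)$ lies within $\rho_{{}_{\Y}}/2$ of $\y_i$, thereby ruling out the other components $\y_j$, the monotonicity of $h$ on $h^{-1}(\Y)$, coupled with Lemma~\ref{mono} and Proposition~\ref{why}, forces $h(D_r)$ to remain in the same small neighborhood of $\y_i$, whence $\dist(h(z),\y_i)\le\mathrm{osc}(h,C_r)$.

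The main obstacle is making the transfer step uniform in $h\in\mathcal F_{\Y}(\X)$: condition (iii) is only an asymptotic statement for each fixed $h$, and without a priori control on the rate at which $h$ approaches $\partial\y_i$, one cannot immediately guarantee that $h(C_r)$ avoids the wrong components $\y_j$. I would circumvent this via a two-regime split keyed to total energy. If $\E[h]\ge E_0$ for a fixed threshold $E_0$ depending only on $\X,\Y$, the desired inequality~\eqref{distineq} is automatic, since $\dist(h(z),\y_i)\le\diam\Y\le(\diam\Y/\sqrt{E_0})\sqrt{\E[h]}$, so $\psi_i$ need only dominate this constant. If $\E[h]\le E_0$, then $\mathcal F_{\Y}(\X)\cap\{\E\le E_0\}$ is $c$-uniformly equicontinuous on compact subsets of $\X$ by~\eqref{continuity}, and a standard Arzel\`a--Ascoli compactness argument converts condition (iii) into a uniform rate: assuming the failure of a uniform rate, one extracts a convergent subsequence whose limit still lies in $\mathcal F_{\Y}(\X)$ (in the spirit of Lemma~\ref{wclosed}) yet violates (iii), a contradiction. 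Once a uniform rate is secured on this regime, the threshold radius below which $\mathrm{osc}(h,C_r)<\rho_{{}_{\Y}}/4$ depends only on $E_0$, $\rho_{{}_{\Y}}$, and $C_1$, so the transfer step closes and the displayed local bound holds uniformly in $h$.
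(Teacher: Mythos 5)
There is a genuine gap, and it sits exactly where you locate ``the main obstacle'': the passage from the qualitative boundary condition (iii) to a rate that is uniform over the class. The Arzel\`a--Ascoli argument you propose cannot deliver it. First, it is circular within the logic of the paper: closure results of the type of Lemma~\ref{wclosed} rest on Lemma~\ref{noname}, which rests on Proposition~\ref{trans}, which is precisely what Theorem~\ref{limittheorem} is meant to supply; so ``in the spirit of Lemma~\ref{wclosed}'' cannot be invoked here. Second, even granting some compactness, the convergence you get is only locally uniform in $\X$, while the failure of a uniform rate is witnessed by points $z_\nu$ tending to $\partial\x_i$; interior convergence says nothing at such points, the limit map cannot be shown to satisfy (iii) (hence need not belong to $\mathcal F_\Y(\X)$ at all), and your intended contradiction --- a limit that ``lies in $\mathcal F_\Y(\X)$ yet violates (iii)'' --- asks for two incompatible properties, neither of which follows. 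Two further soft spots: equicontinuity of $\{h\colon \E[h]\le E_0\}$ on compacta via~\eqref{continuity} presupposes the oscillation property of $h$, which is not among the defining conditions of $\mathcal F_\Y(\X)$ (monotonicity is assumed only on $h^{-1}(\Y)$); and the energy-threshold split does not really confine matters to a bounded-energy family, because near $\x_i$, where $\eta_i$ must vanish, maps of arbitrarily large energy still have to be controlled --- a bound like $\E[h_\nu]\le(\diam\Y/\epsilon)^2$ appears only after one fixes the ratio $\epsilon$ and argues by contradiction.

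The paper supplies precisely the missing mechanism for uniform control up to $\partial\x_i$, and it does so by scalarizing. It chooses a doubly connected $G\subset\Y$ having $\y_i$ as one complementary component and builds a $C^1$ potential $U\colon\C\to[0,1]$ with $U^{-1}\{0\}=\y_i$, whose interior level sets are Jordan curves $\Gamma_t$ separating the components (Claim A). For each competitor, $V_h=U\circ h$, extended by the constants $0$ and $1$ off $\X$, is continuous on all of $\C$ (this is where (iii) enters, but only qualitatively), lies in $W^{1,2}_{\rm loc}(\C)$ with $\int_\C \abs{\nabla V_h}^2$ controlled by $\E[h]$, and --- the key point --- has the oscillation property on every disk of diameter at most $d$, because $V_h^{-1}\{t\}=h^{-1}(\Gamma_t)$ is, by (iv) and Proposition~\ref{why} together with~\eqref{smallestdiam}, a continuum of diameter at least $d$ (Claims B and C). Hence~\eqref{continuity} applies to the $V$'s on the whole plane, so along a putative counterexample sequence the family $\{V_{h_\nu}\}$ is equicontinuous on $\C$; its uniform limit vanishes on $\x_i$, while $V_{h_\nu}(z_\nu)=U\big(h_\nu(z_\nu)\big)\ge t_\circ>0$ because $\dist\big(h_\nu(z_\nu),\y_i\big)\ge\epsilon\sqrt{c(\X,\Y)}$ by Lemma~\ref{lemapp1} --- a contradiction. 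Your Courant--Lebesgue circles and the monotonicity transfer are reasonable interior tools, but without a substitute for this boundary-equicontinuity device the transfer step does not close, and the proposal as written does not prove the theorem.
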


\begin{proof}
It suffices to construct for each $i=1, \dots , k$, a function $\eta_i = \eta_i(z)$ in $\X$ which is bounded and satisfies the conditions $\lim\limits_{z\to \x_i} \eta_i (z)=0$ and~\eqref{distineq}  for all $h\in \mathcal F_{\Y} (\X)$. Continuity of $\eta_i$ can easily be accomplished by taking a continuous majorant. The obvious choice for $\eta_i$ is:
\begin{equation}
\eta_i (z)= \sup_{h\in \mathcal F (\X, \Y)} \frac{\dist \big(h(z), \y_i  \big)}{\sqrt{\E[h]}}, \qquad i=0,1, \dots , k.
\end{equation}
By Lemma~\ref{lemapp1} we see that $\eta_i(z) \le \frac{\diam \Y}{\sqrt{c(\X, \Y)}}$. Fix an index $i$ and suppose, to the contrary, that
$\lim\limits_{z \to \x_i} \eta_i (z) \ne 0$.
Then $\eta_i (z_\nu) \ge \epsilon >0$ for some sequence $\{z_\nu\} \subset \X$ converging to a point $z_\circ \in \x_i$. This means that there is a sequence $\{h_\nu\}$ of functions in $\mathcal F_\Y (\X)$ such that
\begin{equation}\label{badnu}
\dist \big( h_\nu (z_\nu), \y_i \big) \ge \epsilon \, \sqrt{\E [h_\nu]} \ge \epsilon \, \sqrt{c (\X, \Y)},
\end{equation}
by Lemma~\ref{lemapp1}. Obviously, we have
\begin{equation}\label{obviously}
\E[h_\nu] \le \left( \frac{\diam \Y}{\epsilon} \right)^2 , \qquad \nu =1,2, \dots
\end{equation}
Choose and fix  a doubly connected domain $G\subset \Y$ so that one of the connected components of $\C\setminus G$ is $\y_i$.
The following lemma provides us with what we call a potential function for $\y_i$.
\begin{claim0}
There exists a $C^1$-smooth function $U \colon \C \to [0,1]$ such that
$U^{-1} \{0\}=\y_i$ and $U^{-1} \{1\}$ is precisely the other connected component of $\C\setminus G$. Moreover,  for each  $0< t < 1$ the set $\Gamma_t= U^{-1} \{t\}$ is a Jordan curve separating the boundary components of $G$.
\end{claim0}

\begin{proof} Let $\Phi\colon G\to A(r,R)$ be a conformal mapping of $G$ onto a circular annulus $A(r,R)$ or a punctured disk. The function $\abs{\Phi}$ has the desired structure of level sets but may lack smoothness on the boundary. The latter is remedied with the help of a smooth
strictly increasing function $\psi\colon (r,R)\to (0,1)$ such that $\psi'\to 0$ sufficiently fast at the points $r$ and $R$. We define $U$ as the composition $\psi(\abs{\Phi})$, extended by $0$ and $1$ to the entire plane $\C$.
\end{proof}

For $h\in \mathcal F_\Y (\X)$ we consider the continuous function
\begin{equation}\label{defofv}
V(z)=V_h(z)=\begin{cases} U(h(z))\qquad & \text{if }\ z\in\X \\
0 & \text{if }\ z\in\x_i \\
1 & \text{if }\ z\notin \X\cup\x_i.
\end{cases}
\end{equation}
The continuity of $V$ follows from the condition~\eqref{clust}  after
taking into account that $U(\y_i)=\{0\}$ while $U(\y_\alpha)=\{1\}$ for $\alpha\ne i$.

Recall the constant $d$ that was defined in~\eqref{smallestdiam} as the smallest of the numbers $\diam \x_i$, $i=1,\dots,k$.

\begin{claima}
For any $h\in \mathcal F_\Y (\X)$ and $0<t<1$ the level set $V_h^{-1}\{t\}$ is a continuum of diameter at least $d$.
\end{claima}
\begin{proof}
That $V_h^{-1}\{t\}$ is a continuum follows from the monotonicity assumption~\eqref{monoapp}. Choose $\alpha$ such that: $\alpha=i$ if $\x_i$ is bounded and $\alpha\ne i$ otherwise.
In either case the component $\x_\alpha$ is bounded.
Consider a straight line $L$ passing through two points $a,b \in \partial \x_\alpha$ such that $\abs{a-b}= \diam \x_\alpha$.
The set $L\setminus (a,b)$ consists of two closed half-lines $L_a$ and $L_b$. We will show that on each of them $V_h$ attains
the value $t$, which yields 
\[\diam V_h^{-1}\{t\} \ge \abs{a-b} = \diam \x_\alpha \ge d.\]

The half-line $L_a$ meets a bounded component $\x_\alpha$ at the point $a$, and must also intersect the unbounded component of $\C\setminus \X$.
Considering our choice of $\alpha$ we find that $L_a$ meets both $\x_i$ and some other component of $\C\setminus \X$.
Thus $V_h$ attains the values $0$ and $1$ on $L_a$. Being continuous, it also attains the value $t$. Similarly we argue with the half-line $L_b$.
\end{proof}

\begin{claimb} We have $V\in W_{\rm loc}^{1,2}(\C)$. Moreover, $V$ has the oscillation property on every open disk $\B\subset \C$
of diameter not greater than $d=\min\limits_{1\le \alpha\le k}\diam \x_\alpha$.
\end{claimb}
\begin{proof} First note that
$V\in W^{1,2}(\X)$ and we have the pointwise estimate
\[
\abs{\nabla V(z)}\le \norm{\nabla U}_{L^{\infty}(\C)} \abs{Dh(z)},\qquad z\in \X.
\]
Recall that  $V$ is continuous on $\C$ and is constant on each component of $\C\setminus \X$.
The classical Sobolev theory tells us that such function belongs to  $W^{1,2}_{\rm loc}(\C)$
with the energy bound
\begin{equation}\label{tristar}
\int_C \abs{\nabla V(z)}^2\,dz \le \norm{\nabla U}_{\infty} \int_{\X}\abs{Dh}^2.
\end{equation}

We now proceed to check the oscillation property of $V$ on a disk $\B\subset\C$. For this we choose a compact set $\mathbb F\subset \B$.
Consider an arbitrary component of $\mathbb F$, denoted  $\mathbb F_\circ$. Note that $\diam \mathbb F_{\circ}<\diam\B\le d$.
The set $V(\mathbb F_\circ)$ is a compact subinterval of $[0,1]$ which we denote by $[a,b]$.
We will show that 
\begin{equation}\label{wws}
[a,b]\subset V(\partial \mathbb F_\circ).
\end{equation}  

This is obvious when $a=b$, for then $V$ is constant on $\mathbb F_\circ$.
When $a<b$, the inclusion~\eqref{wws} will follow once we prove $(a,b)\subset V(\partial \mathbb F_\circ)$ since the latter set is compact.

Suppose that $t\in (a,b)$ but $t\notin V(\partial \mathbb F_\circ)$. Then
\[
V^{-1}\{t\} \subset (\inte \mathbb F_\circ)\cup (\C\setminus \mathbb F_\circ)
\]
where $\inte \mathbb F_\circ$ stands for the interior of $\mathbb F_\circ$.
By Claim~B the set $V^{-1}\{t\}$ is a continuum of diameter at least $d$ and therefore cannot be a subset of $\inte \mathbb F_\circ$.
Hence $V^{-1}\{t\}\subset \C\setminus \mathbb F_\circ$, but this contradicts the assumption $t\in (a,b)\subset V(\mathbb F_\circ)$.
Completing  the proof of~\eqref{wws}.

From $\partial \mathbb F_\circ \subset \partial \mathbb F$   it follows that
$V(\mathbb F_\circ)\subset V(\partial \mathbb F_\circ)\subset V(\partial\mathbb F)$. Since $\mathbb F_\circ$ was an arbitrary component of $\mathbb F$,
the lemma is proved.
\end{proof}

We now return to the sequence $\{h_\nu\}\subset \mathcal F_\Y (\X)$ defined in~\eqref{badnu} and the associated functions
\begin{equation}\label{associated}
V_\nu(z)=V_{h_{\nu}}(z) \qquad \text{in }\ C(\C)\cap W_{\rm loc}^{1,2}(\C).
\end{equation}
In view of~\eqref{tristar} and~\eqref{obviously} we have the uniform bound on the Dirichlet integrals
\[
\int_{\C} \abs{\nabla V_\nu(z)}^2\,dz \le \norm{\nabla U}_{\infty} \frac{\diam^2 \Y}{\epsilon^2},
\qquad \nu =1,2,\dots
\]
Since $V_\nu$ have the oscillation property on every disk $\B$ of diameter $d$, the estimate~\eqref{continuity} applies, yielding
\[
\abs{V_\nu(a)-V_\nu(b)}^2 \le \frac{C\norm{\nabla U}_{\infty}\diam^2\Y}{\epsilon^2 \log \big(e+\frac{d}{2\abs{a-b}}\big)}
\]
whenever $a,b\in\C$ and $\abs{a-b}\le \frac{1}{2}d$.

This shows that the functions $V_\nu$ are equicontinuous on $\C$. By the Arzel\`a-Ascoli theorem there is a subsequence, again denoted
$\{V_\nu\}$, that converges uniformly on $\C$ to a continuous function $V=V(z)$. In particular,
\begin{equation}\label{twostar}
V_\nu(z_\nu)-V(z_\nu) \to 0 \qquad \text{as }\ \nu\to\infty.
\end{equation}
Also note that $V_\nu\equiv 0$ on $\X_i$ so $V\equiv 0$ on $\X_i$ as well. On the other hand, it follows from the definition of $V_\nu$
that $V_\nu(z_\nu)=U\big(h_\nu(z_\nu)\big)$, and from~\eqref{badnu} we know that $h_\nu(z_\nu)$ stay away from $\y_i$, precisely
\[
\dist \big( h_\nu (z_\nu), \y_i \big) \ge \epsilon \, \sqrt{\E [h_\nu]} \ge \epsilon \, \sqrt{c (\X, \Y)}.
\]
Hence there is $t_\circ>0$ such that $V_\nu(z_\nu)=U(h_\nu(z_\nu))\ge t_\circ$ for all $\nu=1,2,\dots$. Passing to the limit in~\eqref{twostar} as $z_\nu\to z_\circ \in\x_i$,
we obtain a contradiction
\[
t_\circ = t_\circ -V(z_\circ) \le \lim_{\nu\to\infty} \big[ V_{\nu}(z_\nu) - V(z_\nu) \big] = 0
\]
thus completing the proof of Theorem~\ref{limittheorem}.
\end{proof}

\bibliographystyle{amsplain}

\end{document}